\newtheorem{theorem}{Theorem}
\newtheorem{lemma}[theorem]{Lemma}
\newtheorem{corollary}[theorem]{Corollary}
\newtheorem{definition}[theorem]{Definition}
\long\def\symbolfootnote[#1]#2{\begingroup
\def\thefootnote{\fnsymbol{footnote}}\footnote[#1]{#2}\endgroup}
\newcommand{\la}{\lambda}
\newcommand{\La}{\Lambda}
\newcommand{\ris}[1]{\mathrm{ris}(#1)}
\newcommand{\wris}[1]{\mathrm{wris}(#1)}
\newcommand{\sris}[1]{\mathrm{sris}(#1)}
\newcommand{\Ris}{\mathrm{Ris}}
\newcommand{\WRis}{\mathrm{WRis}}
\newcommand{\SRis}{\mathrm{SRis}}
\newcommand{\wdes}[1]{\mathrm{wdes}(#1)}
\newcommand{\sdes}[1]{\mathrm{sdes}(#1)}
\newcommand{\des}[1]{\mathrm{des}(#1)}
\newcommand{\qbinom}[2]{\genfrac{[}{]}{0pt}{}{#1}{#2}_{q}}
\newcommand{\pqbinom}[2]{\genfrac{[}{]}{0pt}{}{#1}{#2}_{p,q}}
\newcommand{\rbinom}[2]{\genfrac{[}{]}{0pt}{}{#1}{#2}_{r}}
\newcommand{\sg}{\sigma}
\newcommand{\ep}{\epsilon}
\newcommand{\cref}[1]{Corollary \ref{corollary:#1}}
\newcommand{\inv}[1]{\mathrm{inv}(#1)}
\newcommand{\coinv}[1]{\mathrm{coinv}(#1)}
\newcommand{\qbin}[3]{\genfrac{[}{]}{0pt}{}{#1}{#2}_{#3}}
\newcommand{\red}[1]{\mathrm{red}(#1)}
\newcommand{\tmch}[1]{\text{$\tau$-$\mathrm{mch}$}(#1)}
\newcommand{\umch}[1]{\text{$u$-$\mathrm{mch}$}(#1)}
\newcommand{\tumch}[1]{\text{$(\tau,u)$-$\mathrm{mch}$}(#1)}
\newcommand{\tulap}[1]{\text{$(\tau,u)$-$\mathrm{nlap}$}(#1)}
\newcommand{\Umch}[1]{\text{$\Upsilon$-$\mathrm{mch}$}(#1)}
\newcommand{\Ulap}[1]{\text{$\Upsilon$-$\mathrm{nlap}$}(#1)}
\newcommand{\fig}[2]{\begin{figure}[ht]
\centerline{\scalebox{.66}{\epsfig{file=#1.eps}}}
\caption{#2}
\label{figure:#1}
\end{figure}}
\title{New pattern matching conditions for wreath products
of the cyclic groups with symmetric groups.}
\author{
Sergey Kitaev\footnote{The work presented here was supported by grant no. 090038011 from the Icelandic Research Fund.}\\
\small The Mathematics Institute\\[-0.8ex]
\small School of Computer Science\\[-0.8ex]
\small Reykjav\'{i}k University\\[-0.8ex]
\small IS-103 Reykjav\'{i}k, Iceland\\[-0.8ex]
\small \texttt{sergey@ru.is}
\and
Andrew Niedermaier \\
\small Department of Mathematics\\[-0.8ex]
\small University of California, San Diego\\[-0.8ex]
\small La Jolla, CA 92093-0112. USA\\[-0.8ex]
\small \texttt{aniederm@math.ucsd.edu}
\and
Jeffrey Remmel\footnote{Partially supported by NSF grant DMS 0654060.} \\
\small Department of Mathematics\\[-0.8ex]
\small University of California, San Diego\\[-0.8ex]
\small La Jolla, CA 92093-0112. USA\\[-0.8ex]
\small \texttt{remmel@math.ucsd.edu}
\and
Manda Riehl\footnote{Partially supported by a grant from the Office of Research and Sponsored Programs, UWEC.}\\
\small Department of Mathematics\\[-0.8ex]
\small University of Wisconsin, Eau Claire\\[-0.8ex]
\small Eau Claire, WI 54702-4004 USA\\[-0.8ex]
\small \texttt{riehlar@uwec.edu}
\and
}
\date{\small Submitted: Date 1;  Accepted: Date 2;
 Published: Date 3.\\
\small MR Subject Classifications: 05A15, 05E05}
\begin{document}
\maketitle

\begin{abstract}
\noindent We present several multi-variable generating functions for
a new pattern matching condition on the wreath product $C_k \wr S_n$
of the cyclic group $C_k$ and the symmetric group $S_n$. Our new
pattern matching condition requires that the underlying permutations
match in the usual sense of pattern matching for $S_n$ and that the
corresponding sequence of signs match in the sense of words, rather
than the exact equality of signs which has been previously studied.
We produce the generating functions for the number of matches that
occur in elements of $C_k \wr S_n$ for any pattern of length $2$ by
applying appropriate homomorphisms from the ring of symmetric
functions over an infinite number of variables to simple symmetric
function identities. We also provide multi-variable generating
functions for the distribution of nonoverlapping matches and for the
number of elements of $C_k \wr S_n$ which have exactly $2$ matches
which do not overlap for several patterns of length $2$.
\end{abstract}

\section{Introduction}

The goal of this paper is to study pattern matching conditions on
the wreath product $C_k \wr S_n$ of the cyclic group $C_k$ and the
symmetric group $S_n$. $C_k \wr S_n$ is the group of $k^nn!$ signed
permutations where we allow $k$ signs of the form $1=\omega^0$, $\omega$,
$\omega^2$, $\ldots$, $\omega^{k-1}$ for some  primitive
$k$-th root of unity $\omega$. We can think of the elements $C_k \wr S_n$ as
pairs $\gamma = (\sg,\epsilon)$ where $\sg  = \sg_1 \ldots \sg_n \in
S_n$ and $\ep =\ep_1 \ldots \ep_n \in \{1,\omega,\ldots,
\omega^{k-1}\}^n$. For ease of notation, if $\epsilon =
(\omega^{w_1},\omega^{w_2}, \ldots, \omega^{w_n})$ where $w_i \in
\{0, \ldots, k-1\}$ for $i =1, \ldots, n$, then we simply write
$\gamma = (\sg,w)$ where $w = w_1 w_2 \ldots w_n$.

Given a sequence $\sg = \sg_1 \cdots \sg_n$ of distinct integers,
let $\red{\sg}$ be the permutation found by replacing the
$i^{\textrm{th}}$ largest integer that appears in $\sg$ by $i$.  For
example, if $\sg = 2~7~5~4$, then $\red{\sg} = 1~4~3~2$.  Given a
permutation $\tau$ in the symmetric group $S_j$, define a
permutation $\sg = \sg_1 \cdots \sg_n \in S_n$ to have a {\em
$\tau$-match at place $i$} provided $\red{\sg_i \cdots \sg_{i+j-1}}
= \tau$.  Let $\tmch{\sg}$ be the number of $\tau$-matches in the
permutation $\sg$.  Similarly, we say that $\tau$ {\em occurs} in
$\sg$ if there exist $1 \leq i_1 < \cdots < i_j \leq n$ such that
$\red{\sg_{i_1} \cdots \sg_{i_j}} = \tau$.  We say that $\sg$ {\em
avoids} $\tau$ if there are no occurrences of $\tau$ in $\sg$.

We can define similar notions for words over a finite alphabet $[k]=
\{0,1, \ldots, k-1\}$. Given a word  $w = w_1 \cdots w_n \in [k]^n$,
let $\red{w}$ be the word found by replacing the $i^{\textrm{th}}$
largest integer that appears in $w$ by $i-1$.  For example, if $w =
2~7~2~4~7$, then $\red{w} = 0~2~0~1~2$.  Given a word $u \in [k]^j$
such that $\red{u}=u$, define a word $w \in [k]^n$ to have a {\em
$u$-match at place $i$} provided $\red{w_i \cdots w_{i+j-1}} = u$.
Let $\umch{w}$ be the number of $u$-matches in the word  $w$.
Similarly, we say that $u$ {\em occurs} in a word $w$ if there exist
$1 \leq i_1 < \cdots < i_j \leq n$ such that $\red{w_{i_1} \cdots
w_{i_j}} = u$. We say that $w$ {\em avoids} $u$ if there are no
occurrences of $u$ in $w$.

There are a number of papers on pattern matching and pattern
avoidance in $C_k\wr S_n$ ~\cite{E,Man,Man1,MW}.  For example, the
following pattern matching condition was studied in \cite{Man, Man1,MW}.
\begin{definition}\label{def1}
\begin{enumerate}
\item We say that an element $(\tau,u) \in C_k \wr S_j$ {\bf occurs} in
an element $(\sg,w) \in C_k \wr S_n$ if there are $1 \leq i_1 < i_2
< \cdots < i_j \leq n$ such that $\red{\sg_{i_1} \ldots \sg_{i_j}} =
\tau$  and $w_{i_p}=u_p$ for $p= 1, \ldots, j$.

\item We say that an element
$(\sg,w) \in C_k \wr S_n$ {\bf avoids} $(\tau,u) \in C_k \wr S_j$ if
there are no occurrences of $(\tau,u)$ in $(\sg,w)$.

\item If $(\sg,w) \in C_k \wr S_n$  and $(\tau,u) \in C_k \wr S_j$, then
we say that there is a {\bf $(\tau,u)$-match in $(\sg,w)$ starting
at position $i$} if $\red{\sg_{i} \sg_{i+1} \ldots \sg_{i+j-1}} =
\tau$ and $w_{i+p-1} = u_{p}$ for $p =1, \ldots, j$.
\end{enumerate}
\end{definition}

\noindent That is, an occurrence or match of $(\tau,u) \in C_k \wr S_j$ in an element
$(\sg,w) \in C_k \wr S_n$ is just an ordinary occurrence or match of $\tau$ in
$\sg$ where the corresponding signs agree exactly.
For example, Mansour \cite{Man1} proved via recursion that
for any $(\tau, u) \in C_k \wr S_2$, the number of
$(\tau,u)$-avoiding elements in $C_k \wr S_n$ is
$\sum_{j=0}^n j!(k-1)^j \binom{n}{j}^2$.  This generalized
a result of Simion \cite{Sim} who proved the same result
for the hyperoctrahedral group $C_2 \wr S_n$.
Similarly,
Mansour and West \cite{MW} determined
the number of permutations in $C_2 \wr S_n$ that avoid all possible
2 and 3 element set of patterns of elements of
$C_2 \wr S_2$.  For example, let
$K_n^1$, the
number of $(\sg,\epsilon) \in C_2 \wr S_n$ that avoid all the
patterns in the set $\{(1~2,0~0),(1~2,0~1),(2~1,1~0)\}$, $K_n^2$, the
number of $(\sg,\epsilon) \in C_2 \wr S_n$ that avoid all the
patterns in the set $\{(1~2,0~1),(1~2,1~0),(2~1,0~1)\}$, and $K_n^3$, the
number of $(\sg,\epsilon) \in C_2 \wr S_n$ that avoid all the
patterns in the set $\{(1~2,0~0),(1~2,0~1),(2~1,0~0)\}$. They proved
that
\begin{eqnarray*}
K_n^1 &=& F_{2n+1}, \\
K_n^2 &=&   n! \sum_{j=0}^n \binom{n}{j}^{-1}, \ \mbox{and} \\
K_n^3 &=& n! +n!\sum_{j=1}^n \frac{1}{j}
\end{eqnarray*}
where $F_n$ is $n$-th Fibonacci number.

In this paper, we shall drop the requirement of the exact matching
of signs and replace it by the condition that the two sequences of
signs match in the sense of words described above. That is, we shall
consider the following pattern matching conditions:
\begin{definition}\label{def2}
Suppose that $(\tau,u) \in C_k \wr S_j$ and
$\red{u} =u$.
\begin{enumerate}
\item We say that $(\tau,u)$ {\bf bi-occurs} in
$(\sg,w) \in C_k \wr S_n$ if there are
$1 \leq i_1 < i_2 < \cdots < i_j \leq n$ such that
$\red{\sg_{i_1} \ldots \sg_{i_j}} = \tau$  and
$\red{w_{i_1} \ldots w_{i_j}} = u$.

\item We say that an element
$(\sg,w) \in C_k \wr S_n$ {\bf bi-avoids} $(\tau,u)$ if there are no
bi-occurrences of $(\tau,u)$ in $(\sg,w)$.

\item  We say that there is a {\bf $(\tau,u)$-bi-match in $(\sg,w) \in
C_k \wr S_n$ starting at position $i$} if $\red{\sg_{i} \sg_{i+1}
\ldots \sg_{i+j-1}} = \tau$ and $\red{w_{i} w_{i+1} \ldots
w_{i+j-1}} = u$.
\end{enumerate}
\end{definition}
\noindent For example, suppose that $(\tau,u) = (1~2,0~0)$ and
$(\sg,w) = (1~3~2~4,1~2~2~2)$. Then there are no occurrences or
matches of $(\tau,u)$ in $(\sg,w)$ according to Definition
\ref{def1}. However, there is a $(\tau,u)$-bi-match in $(\sg,w)$
starting at position $3$; additionally, 34 and 24 in $\sg$ are
bi-occurrences of $(\tau,u)$ in $(\sg,w)$. Let $\tumch{(\sg,w)}$
be the number of $(\tau,u)$-bi-matches in $(\sg,w) \in C_k \wr
S_n$. Let $\tulap{(\sg,w)}$ be the maximum number of
non-overlapping $(\tau,u)$-bi-matches in $(\sg,w)$ where two
$(\tau,u)$-bi-matches are said to overlap if there are positions
in $(\sg,w)$ that are involved in both $(\tau,u)$-bi-matches.

One can easily extend these notions to sets of elements of $C_k
\wr S_j$.  That is, suppose that $\Upsilon \subseteq C_k \wr S_j$
is such that every $(\tau,u) \in \Upsilon$ has the property that
$\red{u} =u$. Then $(\sg,w)$ has a $\Upsilon$-bi-match at place
$i$ provided $(\red{\sg_i \ldots \sg_{i+j-1}},\red{w_i \ldots
w_{i+j-1}}) \in \Upsilon$. Let $\Umch{(\sg,w)}$ and
$\Ulap{(\sg,w)}$ be the number of $\Upsilon$-bi-matches and
non-overlapping $\Upsilon$-bi-matches in $(\sg,w)$, respectively.

In this paper, we shall mainly study the distribution of bi-matches
for patterns of length 2, i.e. where $(\tau,u) \in C_k \wr S_2$.
This is closely related to the analogue of rises and descents in
$C_k \wr S_n$ where we compare pairs using the product order. That
is, instead of thinking of an element of $C_k \wr S_n$ as a pair
$(\sg_1 \ldots \sg_n,w_1 \ldots w_n)$, we can think of it as a
sequence of pairs $(\sg_1,w_1) (\sg_2,w_2) \ldots (\sg_n,w_n)$. We
then define a partial order on such pairs by the usual product
order. That is, $(i_1,j_1) \leq (i_2,j_2)$ if and only if $i_1 \leq
i_2$ and $j_1 \leq j_2$.  Then we define the following statistics
for elements $(\sg,w) \in C_k \wr S_n$.
\begin{eqnarray*}
Des((\sg,w)) &=& \{i:\sg_i > \sg_{i+1} \ \& \ w_i \geq w_{i+1}\}, \des{(\sg,w)} =
|Des((\sg,w))|, \\
Ris((\sg,w)) &=& \{i:\sg_i < \sg_{i+1} \ \& \ w_i \leq w_{i+1}\}, \ris{(\sg,w)} =
|Ris((\sg,w))|, \\
WDes((\sg,w)) &=& \{i:\sg_i > \sg_{i+1} \ \& \ w_i = w_{i+1}\}, \wdes{(\sg,w)} =
|WDes((\sg,w))|, \\
WRis((\sg,w)) &=& \{i:\sg_i < \sg_{i+1} \ \& \ w_i = w_{i+1}\}, \wris{(\sg,w)} =
|WRis((\sg,w))|, \\
SDes((\sg,w)) &=& \{i:\sg_i > \sg_{i+1} \ \& \ w_i > w_{i+1}\}, \sdes{(\sg,w)} =
|SDes((\sg,w))|, \\
SRis((\sg,w)) &=& \{i:\sg_i < \sg_{i+1} \ \& \ w_i < w_{i+1}\}, \sris{(\sg,w)} =
|SRis((\sg,w))|.
\end{eqnarray*}
We shall refer to $Des((\sg,w))$ as the {\em descent set} of $(\sg,w)$,
$WDes((\sg,w))$ as the {\em weak descent set} of $(\sg,w)$, and
$SDes((\sg,w))$ as the {\em strict descent set} of $(\sg,w)$. Similarly,
we shall refer to $Ris((\sg,w))$ as the {\em rise set} of $(\sg,w)$,
$WRis((\sg,w))$ as the {\em weak rise set} of $(\sg,w)$, and
$SRis((\sg,w))$ as the {\em strict rise set} of $(\sg,w)$.
It is easy to see that
\begin{description}
\item $i \in WDes((\sg,w))$ if and only if there is
a $(2~1,0~0)$-bi-match starting at position $i$,

\item $i \in SDes((\sg,w))$ if and only if there is
a $(2~1,1~0)$-bi-match starting at position $i$, and

\item $i \in Des((\sg,w))$ if and only if there is
a $\Upsilon$-bi-match starting at position $i$ where $\Upsilon =
\{(2~1,0~0), (2~1,1~0)\}$.
\end{description}
Similarly,
\begin{description}
\item $i \in WRis((\sg,w))$ if and only if there is
a $(1~2,0~0)$-bi-match starting at position $i$,

\item $i \in SRis((\sg,w))$ if and only if there is
a $(1~2,0~1)$-bi-match starting at position $i$, and

\item $i \in Ris((\sg,w))$ if and only if there is
a $\Upsilon$-bi-match starting at position $i$ where $\Upsilon =
\{(1~2,0~0), (1~2,0~1)\}$.
\end{description}

If $\sg = \sg_1 \ldots \sg_n \in S_n$, then we define
the reverse of $\sg$, $\sg^r$, by  $\sg^r  = \sg_n \ldots \sg_1$.
Similarly, if $w = w_1 \ldots w_n \in [k]^n$, then we define
$w^r = w_n \ldots w_1$. It is easy to see
that
\begin{eqnarray*}
\ris{(\sg,w)} &=& \des{(\sg^r,w^r)}, \\
\wris{(\sg,w)} &=& \wdes{(\sg^r,w^r)}, \ \mbox{and} \\
\sris{(\sg,w)} &=& \sdes{(\sg^r,w^r)}.
\end{eqnarray*}
Thus we need to find the distributions for only one of the
corresponding pairs. We shall prove the following generating
functions.

\begin{equation}\label{ris}
\sum_{n \geq 0} \frac{t^n}{n!} \sum_{(\sg,w) \in C_k \wr S_n}
x^{\ris{(\sg,w)}} = \frac{1-x}{1-x + \sum_{n\geq 1}\frac{((x-1)t)^n}{n!} \binom{n+k-1}{n}}.
\end{equation}

\begin{equation}\label{wris}
\sum_{n \geq 0} \frac{t^n}{n!} \sum_{(\sg,w) \in C_k \wr S_n}
x^{\wris{(\sg,w)}} = \frac{1-x}{1-x +k(e^{(x-1)t}-1)}.
\end{equation}

\begin{equation}\label{sris}
\sum_{n \geq 0} \frac{t^n}{n!} \sum_{(\sg,w) \in C_k \wr S_n}
x^{\sris{(\sg,w)}} = \frac{1-x}{1-x + \sum_{n\geq 1}\frac{((x-1)t)^n}{n!} \binom{k}{n}}.
\end{equation}

Other distributions results for $(\tau,u)$-bi-matches follow
from these results.
For example, if $\sg = \sg_1 \ldots \sg_n \in S_n$, then
we define the complement of $\sg$, $\sg^c$, by
$$\sg^c = (n+1 -\sg_1) \ldots (n+1-\sg_n).$$ If
$w =w_1 \ldots w_n \in [k]^n$, then
we define the complement of $w$, $w^c$, by
$$w^c = (k-1 -w_1) \ldots (k-1-w_n).$$
We can then
consider maps $\phi_{a,b}:C_k \wr S_n \rightarrow C_k \wr S_n$ where
$\phi_{a,b}((\sg,w)) = (\sg^a,w^b)$ for $a,b \in \{r,c\}$. Such maps
will easily allow us to establish that the distribution of
$(\tau,u)$-bi-matches is the same for various classes of $(\tau,u)$'s. For
example, one can use such maps to show that
the distributions of $(1~2,0~1)$-bi-matches, $(2~1,0~1)$-bi-matches,
$(1~2,1~0)$-bi-matches, and $(2~1,1~0)$-bi-matches are all the same.

Another interesting case is when we let
$\Upsilon = \{(1~2,0~1),(1~2,1~0)\}$. In this case
we have a $\Upsilon$-bi-match in $(\sg,w)$ starting at $i$ if and only
if $\sg_i < \sg_{i+1}$ and $w_i \neq w_{i+1}$.  In that case,
we shall show that

\begin{equation}\label{Up}
\sum_{n \geq 0} \frac{t^n}{n!} \sum_{(\sg,w) \in C_k \wr S_n}
x^{\Umch{(\sg,w)}} = \frac{(k-1)(1-x)}{(k-1)(1-x) + k(e^{(k-1)(x-1)t}-1)}.
\end{equation}

In fact, all of the generating functions
(\ref{ris})--(\ref{Up}) are special cases of more refined generating
functions for $C_k \wr S_n$ where we keep track of more statistics.
For $\Upsilon \subseteq C_k \wr S_j$, we shall consider
generating functions of the form
\begin{equation}\label{defDU}
D_k^{\Upsilon}(x,p,q,r,t) = \sum_{n \geq 0} \frac{t^n}{[n]_{p,q}!}
\sum_{(\sg,w) \in C_k \wr S_n} q^{\inv{\sg}}p^{\coinv{\sg}}r^{||w||}
x^{\Umch{(\sg,w)}}
\end{equation}
where $||w||=||w_1\ldots w_n||=w_1+\cdots+w_n$, and $\inv{\sg}$
(resp. $\coinv{\sg}$) is the number of inversions (resp.
coinversions) of $\sg$ defined for $\sg=\sg_1\ldots\sg_n$ as the number of pairs $i<j$ such that $\sg_i>\sg_j$ (resp. $\sg_i<\sg_j$). Let
\begin{eqnarray*}
\Upsilon_{{\bf r}} &=& \{(1~2,0~0),(1~2,0~1)\},\\
\Upsilon_{{\bf w}} &=&  \{(1~2,0~0)\},\\
\Upsilon_{{\bf s}} &=& \{(1~2,0~1)\}, \ \mbox{and} \\
\Upsilon_{{\bf d}} &=& \{(1~2,0~1),(1~2,1~0)\}.
\end{eqnarray*}
Thus $\Upsilon_{{\bf r}}$-matches correspond to rises,
$\Upsilon_{{\bf w}}$-matches correspond to weak rises, and
$\Upsilon_{{\bf s}}$-matches correspond to strict rises. We shall
find $D_k^{\Upsilon_{{\bf a}}}(x,p,q,r,t)$ for ${\bf a} \in \{{\bf
r},{\bf w}, {\bf s}\}$ and find $D_k^{\Upsilon_{{\bf
d}}}(x,p,q,1,t)$. For example, define the $p,q$-analogues of $n$,
$n!$, $\binom{n}{k}$, and $\binom{n}{a_1, \ldots,a_m}$ by
\begin{eqnarray*}
\ [n]_{p,q} &=&  \frac{p^n - q^n}{p-q} = p^{n-1} + p^{n-2} q  +\cdots + pq^{n-2} +q^{n-1}, \\
\ [n]_{p,q}! &=& [n]_{p,q} [n-1]_{p,q} \cdots [2]_{p,q} [1]_{p,q}, \\
\ \qbin{n}{k}{p,q} &=& \frac{[n]_{p,q}!}{[k]_{p,q}![n-k]_{p,q}!}, \ \mbox{and} \\
\ \qbin{n}{a_1, \ldots, a_m}{p,q} &=& \frac{[n]_{p,q}!}{[a_1]_{p,q}! \cdots [a_m]_{p,q}!},
\end{eqnarray*}
respectively.
We define the $q$-analogues of $n$, $n!$, $\binom{n}{k}$, and $\binom{n}{a_1, \ldots,a_m}$ by $[n]_{1,q}$, $[n]_{1,q}!$, $\qbin{n}{k}{1,q}$, and
$\qbin{n}{a_1,\ldots,a_m}{1,q}$, respectively.
Then we will prove that
\begin{eqnarray}\label{pqris}
&&D_k^{\Upsilon_{{\bf r}}}(x,p,q,r,t) = \sum_{n \geq 0} \frac{t^n}{[n]_{p,q}!} \sum_{(\sg,w) \in C_k \wr S_n}
q^{\inv{\sg}}p^{\coinv{\sg}}r^{||w||}x^{\ris{(\sg,w)}} = \nonumber \\
&&\frac{1-x}{1-x + \sum_{n \geq 1}
\frac{p^{\binom{n}{2}}((x-1)t)^n}{[n]_{p,q}!}
\rbinom{n+k-1}{n}}
\end{eqnarray}
which reduces to (\ref{ris}) when we set $p=q=r=1$.

We shall prove our formulas for the generating functions $D_k^{\Upsilon_{{\bf a}}}(x,p,q,r,t)$ for ${\bf a}
\in \{{\bf r},{\bf w}, {\bf s}\}$ and $D_k^{\Upsilon_{{\bf d}}}(x,p,q,1,t)$
by applying a ring
homomorphism, defined on the ring $\Lambda$ of symmetric functions
over infinitely many variables  $x_1,x_2, \ldots$, to a simple
symmetric function identity. There has been a long line of research,
\cite{b}, \cite{b2}, \cite{br}, \cite{L}, \cite{LR}, \cite{MenRem1},
\cite{Book}, \cite{RRW}, \cite{Wag}, \cite{Men}, which shows that a
large number of generating functions for permutation statistics can
be obtained by applying homomorphisms defined on the ring of
symmetric functions $\Lambda$ over infinitely many variables
$x_1,x_2, \ldots $ to simple symmetric function identities.  For
example, the $n$-th elementary symmetric function, $e_n$, and the
$n$-th homogeneous  symmetric function, $h_n$, are defined by the
generating functions
\begin{equation}\label{eq:E}
E(t) =  \sum_{n \geq 0} e_n t^n = \prod_i (1+x_i t)
\end{equation}
and
\begin{equation}\label{eq:H}
H(t) =  \sum_{n \geq 0} h_n t^n = \prod_i \frac{1}{1-x_i t}.
\end{equation}
We let $P(t) = \sum_{n \geq 0} p_n t^n$ where $p_n = \sum_i x_i^n$
is the $n$-th power symmetric function. A partition of
$n$ is a sequence  $\mu =
(\mu_1, \ldots, \mu_k)$ such that $0 < \mu_1 \leq \cdots \leq \mu_k$ and
$\mu_1 + \cdots + \mu_k = n$.  We write $\mu \vdash n$ if $\mu$  is partition
of $n$ and we let $\ell(\mu)$ denote the number of parts of $\mu$.
If $\mu \vdash n$, we set $h_\mu = \prod_{i=1}^{\ell(\mu)}
h_{\mu_i}$,  $e_\mu = \prod_{i=1}^{\ell(\mu)} e_{\mu_i}$,
 and $p_\mu = \prod_{i=1}^{\ell(\mu)} p_{\mu_i}$. Let
$\Lambda_n$ denote the space of homogeneous symmetric functions of
degree $n$ over infinitely many variables $x_1,x_2, \ldots$ so that
$\Lambda=\oplus_{n\geq 0} \Lambda_n$. It is well know that
$\{e_\lambda:\lambda \vdash n\}$, $\{h_\lambda:\lambda \vdash n\}$,
and $\{p_\lambda:\lambda \vdash n\}$ are all bases of $\Lambda_n$.
It follows that $\{e_0, e_1, \ldots \}$ is an algebraically
independent set of generators for $\Lambda$ and hence we can define
a ring homomorphism $\xi:\Lambda \rightarrow R$ where $R$ is a ring
by simply specifying $\xi(e_n)$ for all $n \geq 0$.

Now it is well-known that
\begin{equation}\label{HEI}
H(t) =  1/E(-t)
\end{equation}
and
\begin{equation}\label{PEI}
P(t) = \frac{\sum_{n \geq 1} (-1)^{n-1} n e_n t^n}{E(-t)}.
\end{equation}
A surprisingly large number of results on generating functions for
various permutation statistics in the literature and large number of
new generating functions can be derived by applying homomorphisms on
$\Lambda$ to simple identities such as
 (\ref{HEI}) and (\ref{PEI}). We shall show that all our
generating functions arise by applying appropriate ring homomorphisms
to identity (\ref{HEI}). For example, we shall show that (\ref{pqris})
arises by applying the ring homomorphism  $\xi$ to identity (\ref{HEI})
where
$\xi(e_0) =1$ and
$$\xi(e_n) = \frac{(-1)^{n-1}(x-1)^{n-1}p^{\binom{n}{2}}}{[n]_{p,q}!}
\rbinom{n+k-1}{n}
$$ for all $n \geq 0$.

We can  use our  formulas for the generating functions
$D_k^{\Upsilon_{{\bf a}}}(x,p,q,r,t)$ for ${\bf a} \in \{{\bf
r},{\bf w}, {\bf s}\}$ and $D_k^{\Upsilon_{{\bf d}}}(x,p,q,1,t)$,
to derive a number of other generating functions. For example, for
any $\Upsilon \subseteq C_k \wr S_j$ such that $red(u) =u$ for all
$(\tau,u) \in \Upsilon$, let
\begin{equation}\label{AU}
A_k^{\Upsilon}(p,q,r,t) = \sum_{n \geq 0} \frac{t^n}{[n]_{p,q}!}
\sum_{(\sg,w) \in C_k \wr S_n} q^{\inv{\sg}}p^{\coinv{\sg}}r^{||w||}
\chi( \Umch{(\sg,w)}=0)
\end{equation}
where for any statement $B$, we let $\chi(B)$ equal 1 if $B$ is true
and equal 0 if $B$ is false. Thus $A_k^\Upsilon(p,q,r,t)$ is the
generating function counting elements of $C_k \wr S_n$ with no
$\Upsilon$-matches. We shall prove that if
\begin{equation}\label{defNU}
N_k^{\Upsilon}(x,p,q,r,t) = \sum_{n \geq 0} \frac{t^n}{[n]_{p,q}!}
\sum_{(\sg,w) \in C_k \wr S_n} q^{\inv{\sg}}p^{\coinv{\sg}}r^{||w||}
x^{\Ulap{(\sg,w)}},
\end{equation}
then
\begin{equation}\label{NU}
N_k^\Upsilon(x,p,q,r,t) = \frac{A_k^\Upsilon(p,q,r,t)}{1 -x(([k]_rt-1)
A_k^\Upsilon(p,q,r,t) +1)}.
\end{equation}
This result is an analogue of a result by Kitaev~\cite{K} for
permutations. Since our generating functions will allow us to derive
expressions for $A_k^{\Upsilon_{{\bf a}}}(p,q,r,t)$ for ${\bf a} \in
\{{\bf r},{\bf w},{\bf s}\}$,
 we will automatically be able to find
the generating functions for the distribution of non-overlapping
$\Upsilon_{{\bf a}}$-matches for
${\bf a} \in \{{\bf r},{\bf w},{\bf s}\}$.
There are two additional generating functions that we can obtain
in each of our examples. For example, it
is easy to see that since $\Upsilon_{{\bf r}}$-matches correspond to rises,
then the coefficient of $x$ in
$N_k^{\Upsilon_{{\bf r}}}(x,p,q,r,t) - D_k^{\Upsilon_{{\bf r}}}(x,p,q,r,t)$,
written
$$(N_k^{\Upsilon_{{\bf r}}}(x,p,q,r,t) -D_k^{\Upsilon_{{\bf r}}}(x,p,q,r,t))|_x,$$
is the generating function for $(\sg,w) \in C_k \wr S_n$ such
that $(\sg,w)$ has exactly 2 rises which overlap, i.e. there
is exactly one pattern match of
$$\Upsilon =
\{(0~1~2,0~0~0), (0~1~2,0~0~1), (0~1~2,0~1~1), (0~1~2,0~1~2)\}$$
and no other rises. Moreover,
$$D_k^{\Upsilon_{{\bf r}}}(x,p,q,r,t)|_{x^2}-
[N_k^{\Upsilon_{{\bf r}}}(x,p,q,r,t) -D_k^{\Upsilon_{{\bf r}}}(x,p,q,r,t)|_x]$$
is the generating function for $(\sg,w) \in C_k \wr S_n$ such
that $(\sg,w)$ has exactly 2 rises which do not overlap.  Our
results will allow us to find explicit formulas for these two additional
types of generating functions for rises, weak rises, and strict rises.

The outline of this paper is as follows.
In section 2, we shall provide the necessary background in
symmetric functions that we shall need to derive our generating functions.
In section 3, we shall give our proofs of the generating functions
$D_k^{\Upsilon_{{\bf a}}}(x,p,q,r,t)$ for
${\bf a} \in \{{\bf r},{\bf w},{\bf s}\}$
 and $D_k^{\Upsilon_{{\bf d}}}(x,p,q,1,t)$. Finally,
in sections 4 and 5, we shall find explicit expressions for
$$(N_k^{\Upsilon_{{\bf a}}}(x,p,q,r,t) -D_k^{\Upsilon_{{\bf a}}}(x,p,q,r,t))|_x$$
and
$$D_k^{\Upsilon_{{\bf a}}}(x,p,q,r,t)|_{x^2}-
[N_k^{\Upsilon_{{\bf a}}}(x,p,q,r,t) -D_k^{\Upsilon_{{\bf
a}}}(x,p,q,r,t)|_x]$$ for ${\bf a} \in \{{\bf r},{\bf w},{\bf
s}\}$. In section 6, we shall give tables of the number of
various types of permutations $(\sg,w) \in C_k \wr S_n$ that can be
computed from our generating functions for small values of $k$ and
$n$. We shall see that various sequences associated with our sets of
permutations appear in OEIS~\cite{OEIS} and hence our sequences
count other combinatorial objects. Moreover, we shall see that
for fixed $n$, some
of the sequences are generated by certain natural polynomials in
$k$.  For example, we let $A^{\Upsilon}_{n,k}$
denote the number of $(\sg,w) \in
C_k \wr S_n$ for which $\Umch{(\sg,w)}=0$. We shall show
that if $\Upsilon = \{(1~2,0~0)\}$, then
 $A^{\Upsilon}_{n,k} = \sum_{j=1}^n (-1)^{n-j} j!S_{n,j}k^j$
for all $k \geq 2$, where $S_{n,k}$ is the Stirling number of the
second kind.  Similarly, if $\Upsilon = \{(1~2,0~1), (1~2,1~0)\}$,
then  $A^{\Upsilon}_{n,k}$ is an Eulerian polynomial. That is, for
all $k \geq 2$, $A^{\Upsilon}_{n,k} = \sum_{\sg \in S_n}
k^{\des{\sg}+1}$ for where $\des{\sg}$ is the number of descents of
$\sg$. The connections to the Stirling numbers of the second kind
and to the Eulerian polynomials were observed by Einar
Steingr\'imsson and we prove this in this paper. Finally, in section
7, we shall state a few problems for further research.

\section{Symmetric Functions}

In this section we give the necessary background on symmetric
functions needed for our proofs of the generating functions.

Let $\Lambda$ denote the ring of symmetric functions over infinitely
many variables $x_1, x_2, \ldots $ with coefficients in the field of
complex numbers $\mathbb{C}$. The $n^{\text{th}}$ elementary
symmetric function $e_n$ in the variables $x_1,x_2,\dots$ is given
by
\begin{equation*}
E(t) =  \sum_{n \geq 0} e_n t^n = \prod_i (1+x_i t)
\end{equation*}
and the $n^{\text{th}}$ homogeneous symmetric function $h_n$ in the variables $x_1,x_2,\dots$ is
given by
\begin{equation*}
H(t) =  \sum_{n \geq 0} h_n t^n = \prod_i \frac{1}{1-x_i t}.
\end{equation*}
Thus
\begin{equation}
\label{HE}
H(t) =  1/E(-t).
\end{equation}
Let $\la = (\la_1,\dots,\la_\ell)$ be an integer partition, that is,
$\la$ is a finite sequence of weakly increasing positive
integers.  Let $\ell(\la) =l$ denote the number of parts of
$\la$. If the sum of these integers is $n$, we say that $\la$ is a
partition of $n$ and write $\la \vdash n$.  For any partition $\la =
(\la_1,\dots,\la_\ell)$, let $e_\la = e_{\la_1} \cdots
e_{\la_\ell}$. The well-known fundamental theorem of symmetric
functions says that $\{e_\la : \text{$\la$ is a partition}\}$ is a
basis for $\La$ or that $\{e_0,e_1,\ldots \}$ is an algebraically
independent set of generators for $\Lambda$.
Similarly, if we define  $h_\la = h_{\la_1} \cdots
h_{\la_\ell}$, then $\{h_\la : \text{$\la$ is a partition}\}$ is
also a basis for $\La$. Since $\{e_0,e_1, \ldots \}$ is an algebraically independent
set of generators for $\Lambda$, we can specify a ring homomorphism
$\theta$ on $\Lambda$ by simply defining $\theta(e_n)$ for all
$n \geq 0$.

Since the elementary symmetric functions $e_\lambda$ and the
homogeneous symmetric functions $h_\lambda$ are both bases for
$\Lambda$, it makes sense to talk about the coefficient of the
homogeneous symmetric functions when written in terms of the
elementary symmetric function basis. These coefficients has been shown
to equal the sizes of a certain sets of combinatorial objects up to a
sign. A {\em brick tabloid} of shape $(n)$ and type $\la
=(\la_1,\ldots, \la_k)$ is a filling of a row of $n$ squares of
cells with bricks of lengths $\la_1, \ldots, \la_k$ such that bricks
do not overlap. One brick tabloid of shape $(12)$ and type
$(1,1,2,3,5)$ is displayed below.

\fig{brickt}{A brick tabloid of shape $(12)$ and type $(1,1,2,3,5)$.}

Let $\mathcal{B}_{\la,n}$ denote the set of all $\la$-brick tabloids of shape $(n)$ and let
$B_{\la,n} =|\mathcal{B}_{\la,n}|$.  Through simple recursions
stemming from \eqref{HE}, E\u{g}ecio\u{g}lu and Remmel proved in
\cite{ER} that
\begin{equation}
\label{omar}
h_n = \sum_{\la \vdash n} (-1)^{n - \ell(\la)} B_{\la,n} e_\la.
\end{equation}

We end this section with two lemmas that will be needed in later
sections. Both of the lemmas follow from simple codings of a basic
result of Carlitz \cite{Car2} that
$$
\qbinom{n}{k} = \sum_{r \in \mathcal{R}(1^k0^{n-k})} q^{inv(r)},
$$
where $\mathcal{R}(1^k0^{n-k})$ is the number of rearrangements of
$k$ 1's and $n-k$ 0's.
We start with a lemma from \cite{MRR}.
Fix a brick tabloid $T = (b_1, \ldots,b_{\ell(\mu)}) \in \mathcal{B}_{\mu,n}$.  Let $IF(T)$
denote the set of all fillings of the cells of
$T = (b_1, \ldots, b_{\ell(\mu)})$ with the numbers
$1, \ldots, n$ so that the numbers increase within each brick
reading from left to right. We then think of each such filling as a
permutation of $S_n$ by reading the numbers from left to right in each row. For example,
Figure \ref{figure:fil1} pictures an
element of $IF(3,6,3)$ whose corresponding permutation
is $4~6~12~1~5~7~8~10~11~2~3~9$.

\fig{fil1}{An element of $IF(3,6,3)$.}

Then the following lemma which is proved in \cite{MRR}
gives a combinatorial interpretation to
${p}^{\sum_{i=1}^{\ell(\mu)} \binom{b_i}{2}} \pqbinom{n}{b_1, \ldots, b_{\ell(\mu)}}$.
\begin{lemma}
\label{Carlitz}
If $T=(b_1, \ldots, b_{\ell(\mu)})$ is a brick tabloid in $\mathcal{B}_{\mu,n}$, then
\begin{equation*}
p^{\sum_{i=1}^{\ell(\mu)} \binom{b_i}{2}} \pqbinom{n}{b_1, \ldots,
b_{\ell(\mu)}} = \sum_{\sg \in IF(T)} q^{inv(\sg)} p^{coinv(\sg)}.
\end{equation*}
\end{lemma}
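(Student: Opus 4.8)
The plan is to prove this identity by directly encoding each filling in $IF(T)$ as a word and then invoking the $p,q$-analogue of Carlitz's formula quoted just above the lemma. Since the bricks $b_1,\ldots,b_{\ell(\mu)}$ of $T$ partition the positions $1,\ldots,n$ of the single row into consecutive blocks in that left-to-right order, and since the entries of any $\sg\in IF(T)$ increase within each brick, the permutation $\sg$ is completely determined by the ordered set partition $(B_1,\ldots,B_{\ell(\mu)})$ of $\{1,\ldots,n\}$ in which $B_i$ is the set of values written in brick $i$; here $|B_i|=b_i$ and the elements of $B_i$ appear in increasing order inside brick $i$. I would encode this ordered set partition by the word $w=w_1\cdots w_n\in\{1,\ldots,\ell(\mu)\}^n$ defined by $w_v=i\iff v\in B_i$. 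Then $w$ is a rearrangement of the multiset $1^{b_1}2^{b_2}\cdots(\ell(\mu))^{b_{\ell(\mu)}}$, and $\sg\mapsto w$ is a bijection from $IF(T)$ onto the set $\mathcal{R}$ of all such rearrangements.

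Next I would transport the statistics across this bijection. Fix two values $u<v$. Because entries increase within each brick and brick $a$ lies entirely to the left of brick $b$ whenever $a<b$, the value $u$ occupies an earlier position than $v$ in $\sg$ precisely when $w_u\le w_v$. Hence the pair $\{u,v\}$ contributes to $\inv{\sg}$ exactly when $w_u>w_v$ and contributes to $\coinv{\sg}$ exactly when $w_u\le w_v$. Splitting the latter case according to whether $w_u<w_v$ or $w_u=w_v$, and observing that the number of position-pairs of $w$ carrying equal letters is $\sum_{i=1}^{\ell(\mu)}\binom{b_i}{2}$, I obtain $\inv{\sg}=\inv{w}$ and $\coinv{\sg}=\coinv{w}+\sum_{i=1}^{\ell(\mu)}\binom{b_i}{2}$, where $\inv{w}$ and $\coinv{w}$ count the pairs $i<j$ with $w_i>w_j$ and with $w_i<w_j$, respectively.

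Summing over $IF(T)$ and using the bijection then gives
\[
\sum_{\sg\in IF(T)}q^{\inv{\sg}}p^{\coinv{\sg}}=p^{\sum_{i=1}^{\ell(\mu)}\binom{b_i}{2}}\sum_{w\in\mathcal{R}}q^{\inv{w}}p^{\coinv{w}},
\]
so it remains to identify $\sum_{w\in\mathcal{R}}q^{\inv{w}}p^{\coinv{w}}$ with $\pqbinom{n}{b_1,\ldots,b_{\ell(\mu)}}$. This is the $p,q$-refinement of Carlitz's result: one peels the letters $1,2,\ldots$ off a word of $\mathcal{R}$ one value at a time, at each stage applying $\pqbinom{N}{k}=\sum_{r\in\mathcal{R}(1^k0^{N-k})}q^{\inv{r}}p^{\coinv{r}}$ and checking that inversions and coinversions add up along the factorization $\pqbinom{n}{b_1,\ldots,b_{\ell(\mu)}}=\pqbinom{n}{b_1}\pqbinom{n-b_1}{b_2}\cdots$. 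Substituting this back yields the lemma.

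The only place that needs genuine care is the statistic transport: one must verify that ``$u$ lies before $v$ in $\sg$'' is equivalent to $w_u\le w_v$ — which is exactly where both the increasing-within-bricks condition and the consecutive left-to-right layout of the bricks are used — and then correctly bookkeep which of the $\binom{n}{2}$ value pairs land in $\inv{\sg}$, which land in $\coinv{\sg}$, and which contribute the prefactor $p^{\sum_i\binom{b_i}{2}}$. An alternative would be induction on $\ell(\mu)$, choosing the $b_{\ell(\mu)}$ values that fill the last brick and showing that the generating function factors appropriately; but that approach requires essentially the same inversion bookkeeping at each step, so the direct encoding above looks like the cleaner route.
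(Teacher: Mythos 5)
Your proof is correct and is precisely the ``simple coding'' of Carlitz's formula that the paper itself alludes to: the paper gives no proof of this lemma, deferring to \cite{MRR}, and your encoding of a filling $\sg\in IF(T)$ by the word $w$ recording which brick each value lands in, together with the transport $\inv{\sg}=\inv{w}$ and $\coinv{\sg}=\coinv{w}+\sum_{i}\binom{b_i}{2}$, is exactly the intended argument. Your identification of $\sum_{w}q^{\inv{w}}p^{\coinv{w}}$ with $\pqbinom{n}{b_1,\ldots,b_{\ell(\mu)}}$ via the factorization into $p,q$-binomials is also sound, so nothing further is needed.
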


Another well-known combinatorial interpretation for
$\qbinom{n+k-1}{k-1}$ is that it is equal to the sum of the sizes of
the partitions that are contained in an $n \times (k-1)$ rectangle.
Thus we have the following lemma.
\begin{lemma}
\label{Carlitz3}
\begin{equation*}
\sum_{0 \leq a_1 \leq \cdots \leq a_n \leq k-1} q^{a_1+ \cdots + a_n} =
\qbinom{n+k-1}{n}.
\end{equation*}
\end{lemma}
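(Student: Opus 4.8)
The plan is to deduce the identity from the result of Carlitz quoted just before \lref{Carlitz}, which upon substituting $n+k-1$ for ``$n$'' and $n$ for ``$k$'' reads
$\qbinom{n+k-1}{n} = \sum_{r \in \mathcal{R}(1^n 0^{k-1})} q^{\inv{r}}$,
where $\mathcal{R}(1^n 0^{k-1})$ is the set of rearrangements of $n$ ones and $k-1$ zeros and $\inv{r}$ counts the pairs of positions $i<j$ with $r_i = 1$ and $r_j = 0$. Thus it suffices to exhibit a bijection $\Phi$ from $\mathcal{R}(1^n 0^{k-1})$ to the set of weakly increasing sequences $0 \le a_1 \le \cdots \le a_n \le k-1$ under which $\inv{r}$ corresponds to $a_1 + \cdots + a_n$.

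First I would define $\Phi$ as follows. Given $r = r_1 \cdots r_{n+k-1} \in \mathcal{R}(1^n 0^{k-1})$, read the ones of $r$ from left to right, and for the $i$-th one let $d_i$ be the number of zeros of $r$ lying strictly to its right. Since the ones are encountered in increasing order of position, the count of zeros to the right can only weakly decrease, so $k-1 \ge d_1 \ge d_2 \ge \cdots \ge d_n \ge 0$. Set $\Phi(r) = (a_1, \ldots, a_n)$ with $a_i = d_{n+1-i}$, which is weakly increasing with all entries in $\{0, \ldots, k-1\}$. Conversely, from any weakly increasing sequence $0 \le a_1 \le \cdots \le a_n \le k-1$ one reconstructs the unique word whose $i$-th one (from the left) has exactly $a_{n+1-i}$ zeros to its right, so $\Phi$ is a bijection. (Equivalently, reading $a_n \ge a_{n-1} \ge \cdots \ge a_1$ is a partition with at most $n$ parts each of size at most $k-1$, i.e.\ a partition contained in an $n \times (k-1)$ rectangle, which is the interpretation mentioned above.)

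The key step is then the observation that $\inv{r} = \sum_{i=1}^n d_i = \sum_{i=1}^n a_i$: each inversion of $r$ is a pair consisting of a one followed later by a zero, and grouping these pairs by their one, the one in $i$-th left-to-right position contributes exactly $d_i$ of them, which gives the first equality, while the second is merely the reindexing $a_i = d_{n+1-i}$. Combining this with Carlitz's identity gives
$\qbinom{n+k-1}{n} = \sum_{r \in \mathcal{R}(1^n 0^{k-1})} q^{\inv{r}} = \sum_{0 \le a_1 \le \cdots \le a_n \le k-1} q^{a_1 + \cdots + a_n}$,
as claimed.

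There is no genuine obstacle here; the only thing requiring care is bookkeeping of conventions, namely that an inversion is a $(1,0)$ pair with the one on the left and that one therefore counts zeros to the \emph{right} of each one, which pins down the direction of the monotonicity and hence the reversal $a_i = d_{n+1-i}$. As an alternative to the bijection one may induct on $n+k$ using the $q$-Pascal recurrence $\qbinom{n+k-1}{n} = \qbinom{n+k-2}{n-1} + q^n \qbinom{n+k-2}{n}$, splitting the left-hand sum according to whether $a_1 = 0$ (delete $a_1$, leaving a length $n-1$ sequence with entries in $\{0,\dots,k-1\}$) or $a_1 \ge 1$ (subtract $1$ from every entry, leaving a length $n$ sequence with entries in $\{0,\dots,k-2\}$ and producing the factor $q^n$), with the trivial base cases $n=0$ and $k=1$.
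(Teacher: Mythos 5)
Your proposal is correct and follows essentially the same route the paper indicates: the paper simply asserts that the lemma follows from a ``simple coding'' of Carlitz's identity, namely the well-known interpretation of $\qbinom{n+k-1}{n}$ as the generating function for partitions contained in an $n\times(k-1)$ rectangle, and your bijection (counting zeros to the right of each one) is precisely that coding, spelled out. The inductive argument via the $q$-Pascal recurrence is a fine additional check but not needed.
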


\section{Generating Functions}

The main goal of this section is to prove generating functions that
specialize to the generating functions (\ref{ris})--(\ref{Up}) given
in the introduction.

We start by proving a generating function which specializes
to (\ref{ris}).

\begin{theorem}\label{thm:pqris} Let $\Upsilon_{{\bf r}} =\{(1~2,0~0),(1~2,0~1)\}$.
For all $k \geq 2$,
\begin{eqnarray}\label{eq:pqris}
D_k^{\Upsilon_{{\bf r}}}(x,p,q,r,t) &=& \sum_{n \geq 0} \frac{t^n}{[n]_{p,q}!} \sum_{(\sg,w) \in C_k \wr S_n}
q^{\inv{\sg}}p^{\coinv{\sg}}r^{||w||}x^{\ris{(\sg,w)}} \nonumber \\
&=&
\frac{1-x}{1-x + \sum_{n \geq 1} \frac{p^{\binom{n}{2}}((x-1)t)^n}{[n]_{p,q}!}
\rbinom{n+k-1}{n}}.
\end{eqnarray}
\end{theorem}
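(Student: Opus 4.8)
The plan is to apply the ring homomorphism method advertised in the introduction: define a homomorphism $\xi : \Lambda \to \mathbb{Q}(p,q,r,x)[[t]]$ on the generators $e_n$, apply it to the identity $H(t) = 1/E(-t)$ (equation \eref{HE}), and recognize the resulting series as the left-hand side of \eref{eq:pqris}. Concretely, I would set $\xi(e_0) = 1$ and
$$\xi(e_n) = \frac{(-1)^{n-1}(x-1)^{n-1} p^{\binom{n}{2}}}{[n]_{p,q}!} \rbinom{n+k-1}{n} \qquad (n \geq 1),$$
so that $\sum_{n \geq 0} \xi(e_n)(-t)^n = 1 - \frac{1}{x-1}\sum_{n \geq 1}\frac{p^{\binom{n}{2}}((x-1)t)^n}{[n]_{p,q}!}\rbinom{n+k-1}{n}$, up to the $\tfrac{1}{1-x}$ normalization. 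Then $\xi(E(-t))$ is (a scalar multiple of) the denominator on the right-hand side of \eref{eq:pqris}, and by \eref{HE} we get $\sum_n \xi(h_n) t^n = 1/\xi(E(-t))$, which matches the claimed closed form after tracking the constant. So the entire content of the theorem reduces to the combinatorial claim that
$$[n]_{p,q}! \cdot \xi(h_n) = \sum_{(\sg,w) \in C_k \wr S_n} q^{\inv{\sg}} p^{\coinv{\sg}} r^{||w||} x^{\ris{(\sg,w)}}.$$

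To prove this last identity I would expand $h_n$ in the elementary basis using E\u{g}ecio\u{g}lu--Remmel, equation \eref{omar}: $h_n = \sum_{\mu \vdash n}(-1)^{n-\ell(\mu)} B_{\mu,n} e_\mu$, so $\xi(h_n) = \sum_{\mu \vdash n}(-1)^{n-\ell(\mu)}\sum_{T \in \mathcal{B}_{\mu,n}} \prod_{i=1}^{\ell(\mu)} \xi(e_{b_i})$ where $T = (b_1,\dots,b_{\ell(\mu)})$. For a fixed brick tabloid $T$ with bricks of sizes $b_1,\dots,b_\ell$, the product of signs and the $(-1)^{n-\ell(\mu)}$ combine so that the $(x-1)^{b_i - 1}$ factors contribute a clean power; I would expand each $(x-1)^{b_i-1} = \sum (\cdots) x^j (-1)^{\cdots}$ and interpret the sign as a way of "filling in" which cells of each brick are forced to be rises. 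The standard bijective reading: multiply through by $[n]_{p,q}!$ and use that $\frac{p^{\binom{b_1}{2}+\cdots+\binom{b_\ell}{2}}[n]_{p,q}!}{[b_1]_{p,q}!\cdots[b_\ell]_{p,q}!} = p^{\sum \binom{b_i}{2}} \pqbinom{n}{b_1,\dots,b_\ell}$ is, by \lref{Carlitz}, $\sum_{\sg \in IF(T)} q^{\inv{\sg}} p^{\coinv{\sg}}$ — i.e., it counts permutations whose increasing runs are compatible with the brick structure, weighted by $p,q$-inversions. Meanwhile the factor $\rbinom{b_i + k - 1}{b_i}$ for each brick is, by \lref{Carlitz3}, $\sum_{0 \le a_1 \le \cdots \le a_{b_i} \le k-1} r^{a_1 + \cdots + a_{b_i}}$, which I would interpret as the choices of the sign word $w$ restricted to that brick being weakly increasing; concatenating over bricks builds a word $w \in [k]^n$ together with a distinguished set of positions (the internal positions of bricks) where $(\sg,w)$ is forced to have a rise in the product-order sense $\sg_i < \sg_{i+1}$ and $w_i \le w_{i+1}$.

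The heart of the argument is then the sign-reversing involution. After the expansions above, $[n]_{p,q}!\,\xi(h_n)$ becomes a signed sum over triples (a permutation $\sg$, a sign word $w$, a subset $S$ of brick-internal positions marked either $x$ or $-1$), where the $-1$-marked positions carry the sign. I would define an involution that, given such a triple, finds the first position $i$ (in the permutation $\sg\,$, reading left to right) that is a rise of $(\sg,w)$ — i.e., $\sg_i < \sg_{i+1}$ and $w_i \le w_{i+1}$ — but is \emph{not} an $x$-marked position, and toggles whether $i$ is a brick boundary there (equivalently toggles its membership in the $-1$-marked set, merging or splitting the relevant brick); this is sign-reversing and fixed-point-free except on triples where every rise of $(\sg,w)$ is $x$-marked and no brick boundary is a rise. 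The surviving fixed points are in bijection with pairs $(\sg,w)$ where the $x$-marked positions are exactly the rise set $Ris((\sg,w))$, contributing $q^{\inv{\sg}} p^{\coinv{\sg}} r^{||w||} x^{\ris{(\sg,w)}}$ exactly once. The main obstacle — and the step requiring real care — is verifying that this involution is well-defined across the simultaneous bookkeeping of three things: the $p,q$-statistics of $\sg$ (the $IF(T)$ structure must be preserved when bricks merge/split, which is where \lref{Carlitz} earns its keep), the weakly-increasing constraint on $w$ within bricks (so merging two bricks at a rise $w_i \le w_{i+1}$ keeps $w$ weakly increasing on the merged brick, while splitting is always allowed), and the rise condition being precisely the \emph{product} order rather than just the permutation order. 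Once one checks that a brick boundary at position $i$ can be toggled if and only if $\sg_i < \sg_{i+1}$ and $w_i \le w_{i+1}$, everything lines up, and the theorem follows; setting $p = q = r = 1$ recovers \eref{ris}.
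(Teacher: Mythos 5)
Your proposal is correct and follows essentially the same route as the paper's own proof: the same homomorphism $\Gamma(e_n) = (-1)^{n-1}(x-1)^{n-1}p^{\binom{n}{2}}\rbinom{n+k-1}{n}/[n]_{p,q}!$ applied to $H(t)=1/E(-t)$, the same brick-tabloid expansion via \eref{omar} with Lemmas \ref{Carlitz} and \ref{Carlitz3} supplying the $p,q$-statistics and the weakly increasing sign words, and the same split-or-merge sign-reversing involution whose fixed points are exactly the pairs $(\sg,w)$ with every rise marked by $x$. The only cosmetic difference is that you phrase the involution as toggling the leftmost unmarked rise, while the paper scans for the leftmost $-1$-labelled cell or mergeable brick boundary; these are the same map.
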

 \begin{proof}
Define a ring homomorphism
$\Gamma:\Lambda \rightarrow \mathbb{Q}(p,q,r,x)$ by setting $\Gamma(e_0) =1$
and
\begin{equation}
\Gamma(e_n) = (-1)^{n-1}(x-1)^{n-1}\frac{\rbinom{n+k-1}{n}}{[n]_{p,q}!}
p^{\binom{n}{2}}
\end{equation}
for $n \geq 1$. Then we claim that
\begin{equation}\label{ris1}
[n]_{p,q}!\Gamma(h_n) = \sum_{(\sg,w) \in
C_k \wr S_{n}}
q^{\inv{\sg}}p^{\coinv{\sg}} r^{||w||} x^{\ris{(\sg,w)}}
\end{equation}
for all $n \geq 1$.
That is,
\begin{eqnarray}\label{ris2}
&&\ [n]_{p,q}!\Gamma(h_{n}) = \nonumber \\
&& \ [n]_{p,q}! \sum_{\mu\vdash n} (-1)^{n-\ell(\mu)}B_{\mu, (n)}\Gamma(e_{\mu}) = \nonumber \\
&& \ [n]_{p,q}! \sum_{\mu \vdash n} (-1)^{n-\ell(\mu)} \sum_{(b_1,
\ldots, b_{\ell(\mu)}) \in \mathcal{B}_{\mu,n}}
\prod_{j=1}^{\ell(\mu)} (-1)^{b_j-1}(x-1)^{b_j-1}
\frac{\rbinom{b_j+k-1}{b_j}}{[b_j]_{p,q}!} p^{\binom{b_j}{2}}= \nonumber \\
&& \ \sum_{\mu \vdash n} \sum_{(b_1, \ldots, b_{\ell(\mu)}) \in \mathcal{B}_{\mu,n}} p^{\sum_{j=1}^{\ell(\mu)} \binom{b_j}{2}}\pqbinom{n}{b_1,\ldots,b_{\ell(\mu)}} \prod_{j=1}^{\ell(\mu)} (x-1)^{b_j-1}
\rbinom{b_j+k-1}{b_j}.
\end{eqnarray}

Next we want to give a combinatorial interpretation to
(\ref{ris2}). By Lemma \ref{Carlitz}, for each brick tabloid $T=
(b_1, \ldots, b_{\ell(\mu)})$, we can interpret
$p^{\sum_{j=1}^{\ell(\mu)} \binom{b_j}{2}}
\pqbinom{n}{b_1,\ldots,b_{\ell(\mu)}}$ as the sum of the weights
of all fillings of $T$ with a permutation $\sg \in S_{n}$ such
that $\sg$ is increasing in each brick and we weight $\sg$ with
$q^{\inv{\sg}}p^{\coinv{\sg}}$.  By Lemma \ref{Carlitz3}, we can
interpret the term $\prod_{j=1}^{\ell(\mu)}\rbinom{b_j+k-1}{b_j}$
as the sum of the weights of  fillings  $w= w_1 \ldots w_{n}$
where the elements of $w$ are between 0 and $k-1$ and are weakly
increasing in each brick and where we weight $w$ by $r^{w_1+
\cdots +w_{n}}$.  Finally, we interpret $\prod_{j=1}^{\ell(\mu)}
(x-1)^{b_j-1}$ as all ways of picking a label of the cells of each
brick except the final cell with either an $x$ or a $-1$. For
completeness, we label the final cell of each brick with $1$. We
shall call all such objects created in this way filled labelled
brick tabloids and let $\mathcal{F}_{n}$ denote the set of all
filled labelled brick tabloids that arise in this way.  Thus a $C
\in \mathcal{F}_{n}$ consists of a brick tabloid $T$, a
permutation $\sg \in S_{n}$, a sequence $w \in
\{0,\ldots,k-1\}^{n}$, and a labelling $L$ of the cells of $T$
with elements from $\{x,1,-1\}$ such that
\begin{enumerate}
\item $\sg$ is strictly increasing in each brick, \item $w$ is
weakly increasing in each brick, \item the final cell of each
brick is labelled with 1, and \item each cell which is not a final
cell of a brick is labelled with x or $-1$.
\end{enumerate}
We then define the weight $w(C)$ of $C$ to be
$q^{\inv{\sg}}p^{\coinv{\sg}} r^{||w||}$ times the product of all
the $x$ labels in $L$ and the sign $sgn(C)$ of $C$ to be
the product of all the $-1$ labels in $L$. For example,
if $n =12$, $k=4$, and $T =(4,3,3,2)$, then Figure \ref{figure:ris1}
pictures such a composite object $C \in \mathcal{F}_{12}$ where
$w(C) = q^{24}p^{32}r^{17}x^5$ and $sgn(C) =-1$.

Thus
\begin{equation}\label{ris4}
[n]_{p,q}!\Gamma(h_{n}) = \sum_{C \in \mathcal{F}_{n}}
sgn(C) w(C).
\end{equation}

\fig{ris1}{A composite object $C \in \mathcal{F}_{12}$.}

Next we define a weight-preserving sign-reversing involution
$I:\mathcal{F}_{n} \rightarrow \mathcal{F}_{n}$.  To define
$I(C)$, we scan the cells of $C =(T,\sg,w,L)$ from left  to right
looking for the leftmost cell $t$ such that either (i) $t$ is
labelled with $-1$ or (ii) $t$ is at the end of a brick $b_j$ and
the brick $b_{j+1}$ immediately following $b_j$ has the property
that $\sg$ is strictly increasing in all the cells corresponding
to $b_j$ and $b_{j+1}$ and $w$ is weakly  increasing in all the
cells corresponding to $b_j$ and $b_{j+1}$.  In case (i), $I(C)
=(T',\sg',w',L')$ where $T'$ is the result of  replacing the brick
$b$ in $T$ containing $t$ by two bricks $b^*$ and $b^{**}$ where
$b^*$ contains the cell $t$ plus all the cells in $b$ to the left
of $t$ and $b^{**}$ contains all the cells of $b$ to the right of
$t$, $\sg =\sg'$, $w = w'$, and $L'$ is the labelling that results
from $L$ by changing the label of cell $t$ from $-1$ to $1$. In
case (ii), $I(C) =(T',\sg',r',L')$ where $T'$ is the result of
replacing the bricks $b_j$ and $b_{j+1}$ in $T$ by a single brick
$b,$ $\sg =\sg'$, $w = w'$, and $L'$ is the labelling that results
from $L$ by changing the label of cell $t$ from $1$ to $-1$. If
neither case (i) or case (ii) applies, then we let $I(C) =C$. For
example, if $C$ is the element of $\mathcal{F}_{12}$ pictured in
Figure \ref{figure:ris1}, then $I(C)$ is pictured in Figure
\ref{figure:ris2}.

\fig{ris2}{$I(C)$ for $C$ in Figure \ref{figure:ris1}.}

It is easy to see that $I$ is a weight-preserving sign-reversing
involution and hence $I$ shows that
\begin{equation}\label{ris5}
[n]_{p,q}!\Gamma(h_n) = \sum_{C \in \mathcal{F}_{n},I(C) = C}
sgn(C) w(C).
\end{equation}

Thus we must examine the fixed points $C = (T,\sg,w,L)$ of $I$.
First there can be no $-1$ labels in $L$ so that $sgn(C) =1$.
Moreover,  if $b_j$ and $b_{j+1}$ are two consecutive bricks in $T$
and $t$ is the last cell of $b_j$, then it can not be the case that
$\sg_{t} < \sg_{t+1}$ and $w_t \leq w_{t+1}$ since otherwise we
could combine $b_j$ and $b_{j+1}$. For any such fixed point, we
associate an element $(\sg,w) \in C_k \wr S_{n}$. For example, a
fixed point of $I$ is pictured in Figure \ref{figure:ris3} where
\begin{eqnarray*}
\sg &=& 2~3~4~6~9~10~11~1~8~12~5~7 \ \mbox{and} \\
w &=& 0~1~1~3~1~1~3~0~2~3~3~3.
\end{eqnarray*}
It follows that if cell $t$ is at the end of a brick, then $t \not
\in Ris((\sg,\ep))$. However if $v$ is a cell which is not at the
end of a brick, then our definitions force $\sg_{v} < \sg_{v+1}$
and $w_v \leq w_{v+1}$  so that $v \in Ris((\sg,\ep))$. Since each
such cell $v$ must be labelled with an $x$, it follows that
$sgn(C)w(C) =
q^{inv(\sg)}p^{coinv(\sg)}r^{||w||}x^{\ris{(\sg,\ep)}}$.
  Vice versa, if
$(\sg,w) \in C_k \wr S_{n}$, then we can create a fixed point $C
=(T,\sg,w,L)$ by having the bricks in $T$ end at cells of the form
$t$ where $t \not \in Ris((\sg,\ep))$, and labelling each cell $t
\in Ris((\sg,\ep))$ with $x$ and labelling the remaining cells
with $1$. Thus we have shown that
\begin{equation*}
[n]_{p,q}!\Gamma(h_n) = \sum_{(\sg,w) \in C_k \wr S_{n}}
q^{\inv{\sg}}p^{\coinv{\sg}} r^{||w||} x^{\ris{(\sg,w)}}
\end{equation*}
as desired.

\fig{ris3}{A fixed point of $I$.}

Applying $\Gamma$ to the identity $H(t) = (E(-t))^{-1}$, we get
\begin{eqnarray*}
\sum_{n \geq 0} \Gamma(h_n) t^n &=& \sum_{n\geq 0} \frac{t^{n}}{[n]_{p,q}!} \sum_{(\sg,w)\in C_k\wr S_{n}} q^{\inv{\sg}}p^{\coinv{\sg}}
r^{||w||}x^{\ris{(\sg,w)}} \\
&=& \frac{1}{1+\sum_{n\geq 1} (-t)^n\Gamma(e_n)} \\
&=& \frac{1}{1+\sum_{n\geq 1}(-1)^{n} t^{n}
\frac{(-1)^{n-1}(x-1)^{n-1}p^{\binom{n}{2}}}{[n]_{p,q}!}\rbinom{n+k-1}{k-1}} \\
&=& \frac{1-x}{1-x + \sum_{n\geq
1}\frac{p^{\binom{n}{2}}(x-1)^nt^{n}}{[n]_{p,q}!}\rbinom{n+k-1}{k-1}}
\end{eqnarray*}
which proves (\ref{eq:pqris}).
\end{proof}

In essentially the same way, we can prove a result which
specializes to (\ref{wris}).

\begin{theorem}\label{thm:pqwris} Let $\Upsilon_{{\bf w}}= \{(1~2,0~0)\}$.
Then for all $k \geq 2$,
\begin{eqnarray}\label{eq:pqwris}
D_k^{\Upsilon_{{\bf w}}}(x,p,q,r,t) &=&
\sum_{n \geq 0} \frac{t^n}{[n]_{p,q}!} \sum_{(\sg,w) \in C_k \wr S_n}
q^{\inv{\sg}}p^{\coinv{\sg}}r^{||w||}x^{\wris{(\sg,w)}}
\nonumber \\
&=&
\frac{1-x}{1-x + \sum_{n \geq 1} \frac{p^{\binom{n}{2}}((x-1)t)^n}{[n]_{p,q}!}
[k]_{r^n}}.
\end{eqnarray}
\end{theorem}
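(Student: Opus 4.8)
The plan is to mimic the proof of Theorem \ref{thm:pqris} essentially verbatim, changing only the combinatorial gadget attached to the sign sequence $w$. First I would define a ring homomorphism $\Gamma_{\bf w}:\Lambda\to\mathbb{Q}(p,q,r,x)$ by $\Gamma_{\bf w}(e_0)=1$ and
\[
\Gamma_{\bf w}(e_n)=(-1)^{n-1}(x-1)^{n-1}\frac{[k]_{r^n}}{[n]_{p,q}!}p^{\binom{n}{2}}
\]
for $n\geq 1$. Applying \eqref{omar} to expand $h_n$ in the $e_\lambda$ basis and then applying $\Gamma_{\bf w}$, exactly as in \eqref{ris2}, reduces $[n]_{p,q}!\,\Gamma_{\bf w}(h_n)$ to a sum over brick tabloids $T=(b_1,\ldots,b_{\ell(\mu)})\in\mathcal B_{\mu,n}$ of $p^{\sum_j\binom{b_j}{2}}\pqbinom{n}{b_1,\ldots,b_{\ell(\mu)}}\prod_{j}(x-1)^{b_j-1}[k]_{r^{b_j}}$.

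The only new point is the combinatorial interpretation of $\prod_j [k]_{r^{b_j}}$. Since $[k]_{r^b}=1+r^b+r^{2b}+\cdots+r^{(k-1)b}$, I would interpret the factor $[k]_{r^{b_j}}$ for brick $b_j$ as the choice of a single value $c_j\in\{0,1,\ldots,k-1\}$, contributing weight $r^{b_j c_j}$; equivalently, one fills every cell of brick $b_j$ with the \emph{same} symbol $c_j$, so that the resulting word $w=w_1\cdots w_n$ is \emph{constant on each brick} and is weighted by $r^{||w||}=r^{w_1+\cdots+w_n}$. Lemma \ref{Carlitz} still gives the interpretation of $p^{\sum_j\binom{b_j}{2}}\pqbinom{n}{b_1,\ldots,b_{\ell(\mu)}}$ as fillings of $T$ by a permutation $\sg\in S_n$ increasing within each brick, weighted $q^{\inv{\sg}}p^{\coinv{\sg}}$, and $\prod_j(x-1)^{b_j-1}$ is interpreted exactly as before, labelling all non-final cells of each brick by $x$ or $-1$ and the final cell by $1$. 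Thus $[n]_{p,q}!\,\Gamma_{\bf w}(h_n)=\sum_{C}sgn(C)w(C)$ over these new filled labelled brick tabloids (with $w$ constant on bricks).

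Next I would run the identical involution $I$: scan left to right for the leftmost cell labelled $-1$ (case (i)), or the leftmost cell ending a brick $b_j$ such that $\sg$ is increasing across $b_j,b_{j+1}$ \emph{and} the constant value of $w$ on $b_j$ is $\leq$ that on $b_{j+1}$ (case (ii)); split or merge accordingly. This is weight-preserving and sign-reversing, so $[n]_{p,q}!\,\Gamma_{\bf w}(h_n)$ equals the signed sum over fixed points. A fixed point has no $-1$ labels and no mergeable adjacent bricks; reading off $(\sg,w)\in C_k\wr S_n$, a non-final cell $v$ of a brick forces $\sg_v<\sg_{v+1}$ and $w_v=w_{v+1}$ (since $w$ is constant on that brick), hence $v\in WRis((\sg,w))$ and carries an $x$; a final cell $t$ of a brick cannot satisfy $\sg_t<\sg_{t+1}$ with $w_t\le w_{t+1}$, and in particular cannot satisfy $\sg_t<\sg_{t+1}$ with $w_t=w_{t+1}$, so $t\notin WRis((\sg,w))$. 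Conversely, given $(\sg,w)$, one builds the unique fixed point by ending bricks exactly at the non-weak-rise positions and at positions where the $w$-value changes; note that forcing a brick boundary whenever $w_i\neq w_{i+1}$ is precisely what makes $w$ constant on bricks, and this is consistent since a weak rise requires $w_i=w_{i+1}$. Hence $[n]_{p,q}!\,\Gamma_{\bf w}(h_n)=\sum_{(\sg,w)\in C_k\wr S_n}q^{\inv{\sg}}p^{\coinv{\sg}}r^{||w||}x^{\wris{(\sg,w)}}$. Finally, applying $\Gamma_{\bf w}$ to $H(t)=1/E(-t)$ and simplifying exactly as at the end of the previous proof yields \eqref{eq:pqwris}. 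The main thing to get right is the consistency check in the converse direction — that the extra brick boundaries forced by changes in $w$ do not conflict with the requirement that non-final cells be weak rises — and this holds precisely because a weak rise demands equality of consecutive $w$-values; everything else is a routine transcription of the proof of Theorem \ref{thm:pqris}.
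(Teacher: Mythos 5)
Your overall strategy is exactly the paper's: the homomorphism $\Gamma_{\bf w}$, the brick-tabloid expansion of $h_n$, the interpretation of $[k]_{r^{b_j}}$ as a constant filling of brick $b_j$ by a single symbol, and the final application of $H(t)=1/E(-t)$ all match the paper's proof. The one place where your argument genuinely fails is the merging condition in case (ii) of the involution. You merge bricks $b_j,b_{j+1}$ when $\sg$ increases across the boundary and the constant value of $w$ on $b_j$ is $\leq$ that on $b_{j+1}$; the paper merges only when $\sg$ increases across the boundary and $w$ is \emph{constant} on the union of the two bricks, i.e.\ the two constant values are \emph{equal}. With your condition the map is not even well defined on the object set: merging two bricks carrying distinct constants produces a brick on which $w$ is not constant, which lies outside the class of filled labelled brick tabloids whose signed weights realize $[n]_{p,q}!\,\Gamma_{\bf w}(h_n)$ (the factor $[k]_{r^{b}}$ accounts only for constant fillings of a brick of length $b$).

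The error also propagates to the fixed-point analysis and contradicts your own converse construction. Take $(\sg,w)$ with a strict rise at position $t$, say $\sg_t<\sg_{t+1}$ and $w_t<w_{t+1}$; this is not a weak rise, so your converse construction correctly ends a brick at $t$. But under your case (ii) this boundary is mergeable ($\sg$ increases and $w_t\le w_{t+1}$), so the object you build is not a fixed point, and such $(\sg,w)$ would be omitted from the count. Replacing ``$\leq$'' by ``$=$'' in case (ii) repairs everything: at a fixed point a brick boundary $t$ then cannot have both $\sg_t<\sg_{t+1}$ and $w_t=w_{t+1}$, which is precisely the statement $t\notin WRis((\sg,w))$, and the converse goes through because any non-weak-rise boundary has either $\sg_t>\sg_{t+1}$ or $w_t\neq w_{t+1}$, and in either case the corrected merge condition fails. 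With that correction your proof coincides with the paper's.
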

 \begin{proof}
Define a ring homomorphism
$\Gamma_w:\Lambda \rightarrow \mathbb{Q}(p,q,r,x)$ by setting
$\Gamma_w(e_0)=1$
and
\begin{equation}
\Gamma_w(e_n) = (-1)^{n-1}(x-1)^{n-1}\frac{[k]_{r^n}}{[n]_{p,q}!}
p^{\binom{n}{2}}
\end{equation}
for $n \geq 1$. Then we claim that
\begin{equation}\label{wris1}
[n]_{p,q}!\Gamma_w(h_n) = \sum_{(\sg,w) \in
C_k \wr S_{n}}
q^{\inv{\sg}}p^{\coinv{\sg}} r^{||w||} x^{\wris{(\sg,w)}}
\end{equation}
for all $n \geq 1$.
That is,
\begin{eqnarray}\label{wris2}
&&\ [n]_{p,q}!\Gamma_w(h_{n}) = \nonumber \\
&& \ [n]_{p,q}! \sum_{\mu\vdash n} (-1)^{n-\ell(\mu)}B_{\mu, (n)}\Gamma_w(e_{\mu}) = \nonumber \\
&& \ [n]_{p,q}! \sum_{\mu \vdash n} (-1)^{n-\ell(\mu)} \sum_{(b_1,
\ldots, b_{\ell(\mu)}) \in \mathcal{B}_{\mu,n}}
\prod_{j=1}^{\ell(\mu)} (-1)^{b_j-1}(x-1)^{b_j-1}
\frac{[k]_{r^{b_j}}}{[b_j]_{p,q}!} p^{\binom{b_j}{2}}= \nonumber \\
&& \ \sum_{\mu \vdash n} \sum_{(b_1, \ldots, b_{\ell(\mu)}) \in \mathcal{B}_{\mu,n}} p^{\sum_{j=1}^{\ell(\mu)} \binom{b_j}{2}}\pqbinom{n}{b_1,\ldots,b_{\ell(\mu)}} \prod_{j=1}^{\ell(\mu)} (x-1)^{b_j-1}
[k]_{r^{b_j}}.
\end{eqnarray}

Next we want to give a combinatorial interpretation to (\ref{wris2}).
By Lemma \ref{Carlitz} for each brick tabloid
$T= (b_1, \ldots, b_{\ell(\mu)})$, we can interpret
$p^{\sum_{j=1}^{\ell(\mu)} \binom{b_j}{2}}
\pqbinom{n}{b_1,\ldots,b_{\ell(\mu)}}$ as the sum of the weights of all fillings of $T$ with
a permutation $\sg \in S_{n}$ such that $\sg$ is increasing in each brick
and we weight $\sg$ by $q^{\inv{\sg}}p^{\coinv{\sg}}$.
For each $j$, we have a factor
$$[k]_{r^{b_j}} =
r^{0\cdot b_j} +r^{1\cdot b_j}+ \cdots  +r^{(k-1)\cdot b_j}.$$ We
shall interpret the term $r^{sb_j}$ as indicating that we will
fill the top of each cell of a brick $b_j$ with $s$. Thus we can
interpret $\prod_{j=1}^{\ell(\mu)}[k]_{r^{b_j}}$ as filling of the
brick with a sequence $w_1 \ldots w_n \in [k]^n$ such that $w$ is
constant in each brick and where we weight $w$ by $r^{||w||}$.
Finally, we interpret $\prod_{j=1}^{\ell(\mu)} (x-1)^{b_j-1}$ as
all ways of picking a label of the cells of each brick except the
final cell with either an $x$ or a $-1$. For completeness, we
label the final cell of each brick with $1$. We shall call all
such objects created in this way filled labelled brick tabloids
and let $\mathcal{G}_{n}$ denote the set of all filled labelled
brick tabloids that arise in this way.  Thus a $C \in
\mathcal{G}_{n}$ consists of a brick tabloid $T$, a permutation
$\sg \in S_{n}$, a sequence $w \in \{0,\ldots,k-1\}^{n}$, and a
labelling $L$ of the cells of $T$ with elements from $\{x,1,-1\}$
such that
\begin{enumerate}
\item $\sg$ is strictly increasing in each brick, \item $w$ is
constant in each brick, \item the final cell of each brick is
labelled with 1, and \item each cell which is not a final cell of
a brick is labelled with x or $-1$.
\end{enumerate}
We then define the weight $w(C)$ of $C$ to be
$q^{\inv{\sg}}p^{\coinv{\sg}} r^{||w||}$ times the product of all
the $x$ labels in $L$ and the sign $sgn(C)$ of $C$ to be
the product of all the $-1$ labels in $L$. For example,
if $n =12$, $k=4$, and $T =(4,3,3,2)$, then Figure \ref{figure:wris1}
pictures such a composite object $C \in \mathcal{G}_{12}$ where
$w(C) = q^{24}p^{42}r^{19}x^5$ and $sgn(C) =-1$.

Thus
\begin{equation}\label{wris4}
[n]_{p,q}!\Gamma_w(h_{n}) = \sum_{C \in \mathcal{G}_{n}}
sgn(C) w(C).
\end{equation}

\fig{wris1}{A composite object $C \in \mathcal{G}_{12}$.}

Next we define a weight-preserving sign-reversing involution
$I_w:\mathcal{G}_{n} \rightarrow \mathcal{G}_{n}$.  To define
$I_w(C)$, we scan the cells of $C =(T,\sg,w,L)$ from left to right
looking for the leftmost cell $t$ such that either (i) $t$ is
labelled with $-1$ or (ii) $t$ is at the end a brick $b_j$ and the
brick $b_{j+1}$ immediately following $b_j$ has the property that
$\sg$ is strictly increasing in all the cells corresponding to
$b_j$ and $b_{j+1}$ and $w$ is constant in all the cells
corresponding to $b_j$ and $b_{j+1}$.  In case (i), $I_w(C)
=(T',\sg',w',L')$ where $T'$ is the result of  replacing the brick
$b$ in $T$ containing $t$ by two bricks $b^*$ and $b^{**}$ where
$b^*$ contains the cell $t$ plus all the cells in $b$ to the left
of $t$ and $b^{**}$ contains all the cells of $b$ to the right of
$t$, $\sg =\sg'$, $w = w'$, and $L'$ is the labelling that results
from $L$ by changing the label of cell $t$ from $-1$ to $1$. In
case (ii), $I_w(C) =(T',\sg',r',L')$ where $T'$ is the result of
replacing the bricks $b_j$ and $b_{j+1}$ in $T$ by a single brick
$b,$ $\sg =\sg'$, $w = w'$, and $L'$ is the labelling that results
from $L$ by changing the label of cell $t$ from $1$ to $-1$. If
neither case (i) or case (ii) applies, then we let $I_w(C) =C$.
For example, if $C$ is the element of $\mathcal{G}_{12}$ pictured
in Figure \ref{figure:wris1}, then $I_w(C)$ is pictured in Figure
\ref{figure:wris2}.

\fig{wris2}{$I_w(C)$ for $C$ in Figure \ref{figure:wris1}.}

It is easy to see that $I_w$ is a weight-preserving sign-reversing
involution and hence $I_w$ shows that
\begin{equation}\label{wris5}
[n]_{p,q}!\Gamma_w(h_n) = \sum_{C \in \mathcal{G}_{n},I_w(C) = C}
sgn(C) w(C).
\end{equation}

Thus we must examine the fixed points $C = (T,\sg,w,L)$ of $I_w$.
First there can be no $-1$ labels in $L$ so that $sgn(C) =1$.
Moreover,  if $b_j$ and $b_{j+1}$ are two consecutive bricks in $T$
and $t$ is that last cell of $b_j$, then it can not be the case that
$\sg_{t} < \sg_{t+1}$ and $w_t = w_{t+1}$ since otherwise we could
combine $b_j$ and $b_{j+1}$. For any such fixed point, we associate
an element $(\sg,w) \in C_k \wr S_{n}$. For example, a fixed point
of $I_w$ is pictured in Figure \ref{figure:wris3} where
\begin{eqnarray*}
\sg &=& 2~3~4~6~9~10~11~1~8~12~5~7 \ \mbox{and} \\
w &=& 3~3~3~3~1~1~1~2~2~2~3~3.
\end{eqnarray*}
It follows that if cell $t$ is at the end of a brick, then $t \not
\in WRis((\sg,\ep))$. However if $v$ is a cell which is not at the
end of a brick, then our definitions force $\sg_{v} < \sg_{v+1}$
and $w_v =w_{v+1}$  so that $v \in WRis((\sg,\ep))$. Since each
such cell $v$ must be labelled with an $x$, it follows that
$sgn(C)w(C) =
q^{inv(\sg)}p^{coinv(\sg)}r^{||w||}x^{\wris{(\sg,\ep)}}$.
  Vice versa, if
$(\sg,w) \in C_k \wr S_{n}$, then we can create a fixed point $C
=(T,\sg,w,L)$ by having the bricks in $T$ end at cells of the form
$t$ where $t \not \in WRis((\sg,\ep))$, and labelling each cell $t
\in WRis((\sg,\ep))$ with $x$ and labelling the remaining cells
with $1$. Thus we have shown that
\begin{equation*}
[n]_{p,q}!\Gamma_w(h_n) = \sum_{(\sg,w) \in C_k \wr S_{n}}
q^{\inv{\sg}}p^{\coinv{\sg}} r^{||w||} x^{\wris{(\sg,w)}}
\end{equation*}
as desired.

\fig{wris3}{A fixed point of $I_w$.}

Applying $\Gamma_w$ to the identity $H(t) = (E(-t))^{-1}$, we get
\begin{eqnarray*}
\sum_{n \geq 0} \Gamma_w(h_n) t^n &=& \sum_{n\geq 0} \frac{t^{n}}{[n]_{p,q}!} \sum_{(\sg,w)\in C_k\wr S_{n}} q^{\inv{\sg}}p^{\coinv{\sg}}
r^{||w||}x^{\wris{(\sg,w)}} \\
&=& \frac{1}{1+\sum_{n\geq 1} (-t)^n\Gamma_w(e_n)} \\
&=& \frac{1}{1+\sum_{m\geq 1}(-1)^{m} t^{m}
\frac{(-1)^{m-1}(x-1)^{m-1}p^{\binom{m}{2}}}{[m]_{p,q}!}[k]_{r^m}} \\
&=&
\frac{1-x}{1-x + \sum_{m\geq 1}\frac{p^{\binom{m}{2}}(x-1)^mt^{m}}{[m]_{p,q}!}
[k]_{r^m}}
\end{eqnarray*}
which proves (\ref{eq:pqwris}).
\end{proof}

Next we prove a result which specializes to (\ref{sris}).

\begin{theorem}\label{thm:pqsris} Let $\Upsilon_{{\bf s}} =\{(1~2,0~1)\}$.
For all $k \geq 2$,
\begin{eqnarray}\label{eq:pqsris}
D_k^{\Upsilon_{{\bf s}}}(x,p,q,r,t) &=& \sum_{n \geq 0} \frac{t^n}{[n]_{p,q}!} \sum_{(\sg,w) \in C_k \wr S_n}
q^{\inv{\sg}}p^{\coinv{\sg}}r^{||w||}x^{\sris{(\sg,w)}}
\nonumber \\
&=&
\frac{1-x}{1-x + \sum_{n \geq 1} \frac{p^{\binom{n}{2}}((x-1)t)^n}{[n]_{p,q}!}
r^{\binom{n}{2}}\rbinom{k}{n}}.
\end{eqnarray}
\end{theorem}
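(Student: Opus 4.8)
The plan is to follow the template of the proofs of Theorems~\ref{thm:pqris} and~\ref{thm:pqwris} verbatim, changing only the combinatorial meaning of the sign-sequence factor so that the signs are forced to be \emph{strictly} increasing within each brick. Concretely, I would define a ring homomorphism $\Gamma_s:\Lambda\rightarrow\mathbb{Q}(p,q,r,x)$ by $\Gamma_s(e_0)=1$ and
\[
\Gamma_s(e_n) = (-1)^{n-1}(x-1)^{n-1}\frac{r^{\binom{n}{2}}\rbinom{k}{n}}{[n]_{p,q}!}\,p^{\binom{n}{2}}
\]
for $n\geq 1$, and prove that $[n]_{p,q}!\,\Gamma_s(h_n)=\sum_{(\sg,w)\in C_k\wr S_n}q^{\inv{\sg}}p^{\coinv{\sg}}r^{||w||}x^{\sris{(\sg,w)}}$. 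The generating function~\eqref{eq:pqsris} then drops out by applying $\Gamma_s$ to $H(t)=1/E(-t)$ and clearing the $(1-x)$ factor exactly as before.

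First I would expand $[n]_{p,q}!\,\Gamma_s(h_n)$ using~\eqref{omar} to obtain, as in~\eqref{ris2} and~\eqref{wris2}, the sum $\sum_{\mu\vdash n}\sum_{(b_1,\ldots,b_{\ell(\mu)})\in\mathcal{B}_{\mu,n}} p^{\sum_j\binom{b_j}{2}}\pqbinom{n}{b_1,\ldots,b_{\ell(\mu)}}\prod_j(x-1)^{b_j-1}r^{\binom{b_j}{2}}\rbinom{k}{b_j}$. Lemma~\ref{Carlitz} interprets $p^{\sum_j\binom{b_j}{2}}\pqbinom{n}{b_1,\ldots,b_{\ell(\mu)}}$ as the weighted sum over fillings of the bricks by a permutation $\sg\in S_n$ strictly increasing within each brick, weighted $q^{\inv{\sg}}p^{\coinv{\sg}}$, and $\prod_j(x-1)^{b_j-1}$ as labellings of the non-final cells of each brick by $x$ or $-1$, just as in the earlier proofs. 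The one genuinely new ingredient is the interpretation of $\prod_j r^{\binom{b_j}{2}}\rbinom{k}{b_j}$: I would establish the identity
\[
\sum_{0\leq a_1<a_2<\cdots<a_m\leq k-1} r^{a_1+\cdots+a_m} = r^{\binom{m}{2}}\rbinom{k}{m}
\]
via the substitution $a_i=c_i+(i-1)$ (so $0\leq c_1\leq\cdots\leq c_m\leq k-m$) combined with Lemma~\ref{Carlitz3} applied with $k-m$ in place of $k-1$. Consequently $\prod_j r^{\binom{b_j}{2}}\rbinom{k}{b_j}$ is the weighted sum over all fillings $w=w_1\cdots w_n\in[k]^n$ that are \emph{strictly} increasing within each brick, weighted $r^{||w||}$ (and this is automatically $0$ whenever some brick has length $>k$, as it should be).

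This gives $[n]_{p,q}!\,\Gamma_s(h_n)=\sum_C sgn(C)w(C)$, where $C=(T,\sg,w,L)$ ranges over filled labelled brick tabloids with $\sg$ strictly increasing in each brick, $w$ strictly increasing in each brick, each final cell labelled $1$, each non-final cell labelled $x$ or $-1$, and $w(C),sgn(C)$ defined as before. I would then run the same weight-preserving, sign-reversing involution: scan left to right for the leftmost cell $t$ that is either labelled $-1$ (split its brick right after $t$, relabel $t$ by $1$) or is the last cell of a brick $b_j$ for which $\sg$ \emph{and} $w$ are both strictly increasing across the cells of $b_j$ followed by $b_{j+1}$ (merge the two bricks, relabel $t$ by $-1$); if no such $t$ exists, fix $C$. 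Since every contiguous sub-block of a strictly increasing sequence is strictly increasing, the split move always yields a legal object, and the usual leftmost-cell bookkeeping shows this map is an involution. The fixed points carry no $-1$ labels, have $t\notin SRis((\sg,w))$ at the end of each non-terminal brick and $t\in SRis((\sg,w))$ (labelled $x$) at every non-final cell, so $sgn(C)w(C)=q^{\inv{\sg}}p^{\coinv{\sg}}r^{||w||}x^{\sris{(\sg,w)}}$; conversely each $(\sg,w)\in C_k\wr S_n$ yields exactly one fixed point by cutting bricks at the non-strict-rises. The only place requiring care beyond a verbatim copy of the earlier arguments is the identity $\sum_{0\leq a_1<\cdots<a_m\leq k-1}r^{\sum a_i}=r^{\binom{m}{2}}\rbinom{k}{m}$ and the verification that the strict-increasingness of $w$ is preserved by both moves of the involution; everything else is structurally identical to Theorem~\ref{thm:pqwris}.
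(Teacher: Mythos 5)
Your proposal is correct and matches the paper's own proof essentially step for step: the same homomorphism $\Gamma_s$, the same identity $\sum_{0\leq i_1<\cdots<i_n\leq k-1}r^{i_1+\cdots+i_n}=r^{\binom{n}{2}}\rbinom{k}{n}$ obtained from Lemma \ref{Carlitz3} by the shift $i_s=j_s+s-1$, and the same split/merge involution with strictness of $w$ within bricks replacing weak monotonicity. No substantive differences.
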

 \begin{proof}
Define a ring homomorphism
$\Gamma_s:\Lambda \rightarrow \mathbb{Q}(p,q,r,x)$
by setting $\Gamma_s(e_0) =1$ and
\begin{equation}
\Gamma_s(e_n) = (-1)^{n-1}(x-1)^{n-1}\frac{r^{\binom{n}{2}}\rbinom{k}{n}}{[n]_{p,q}!}
p^{\binom{n}{2}}
\end{equation}
for $n \geq 1$. Then we claim that
\begin{equation}\label{sris1}
[n]_{p,q}!\Gamma_s(h_n) = \sum_{(\sg,w) \in
C_k \wr S_{n}}
q^{\inv{\sg}}p^{\coinv{\sg}} r^{||w||} x^{\sris{(\sg,w)}}
\end{equation}
for all $n \geq 1$.
That is,
\begin{eqnarray}\label{sris2}
&&\ [n]_{p,q}!\Gamma_s(h_{n}) = \nonumber \\
&& \ [n]_{p,q}! \sum_{\mu\vdash n} (-1)^{n-\ell(\mu)}B_{\mu, (n)}\Gamma_s(e_{\mu}) = \nonumber \\
&& \ [n]_{p,q}! \sum_{\mu \vdash n} (-1)^{n-\ell(\mu)} \sum_{(b_1,
\ldots, b_{\ell(\mu)}) \in \mathcal{B}_{\mu,n}}
\prod_{j=1}^{\ell(\mu)} (-1)^{b_j-1}(x-1)^{b_j-1}
\frac{r^{\binom{b_j}{2}}\rbinom{k}{b_j}}{[b_j]_{p,q}!} p^{\binom{b_j}{2}}= \nonumber \\
&& \ \sum_{\mu \vdash n} \sum_{(b_1, \ldots, b_{\ell(\mu)}) \in \mathcal{B}_{\mu,n}} p^{\sum_{j=1}^{\ell(\mu)} \binom{b_j}{2}}\pqbinom{n}{b_1,\ldots,b_{\ell(\mu)}} \prod_{j=1}^{\ell(\mu)} (x-1)^{b_j-1}
r^{\binom{b_j}{2}}\rbinom{k}{b_j}.
\end{eqnarray}

Next we want to give a combinatorial interpretation to (\ref{sris2}).
By Lemma \ref{Carlitz} for each brick tabloid
$T= (b_1, \ldots, b_{\ell(\mu)})$, we can interpret
$p^{\sum_{j=1}^{\ell(\mu)} \binom{b_j}{2}}
\pqbinom{n}{b_1,\ldots,b_{\ell(\mu)}}$ as the sum of the weights of all fillings of $T$ with
a permutation $\sg \in S_{n}$ such that $\sg$ is increasing in each brick
and we weight $\sg$ by $q^{\inv{\sg}}p^{\coinv{\sg}}$.  By Lemma
\ref{Carlitz3},
$$\sum_{0 \leq j_1 \leq \cdots \leq j_n \leq k-n} q^{j_1 + j_2+ \cdots + j_n} =
\qbinom{k}{n}.$$
If we replace each $j_s$ in the sum above by $i_s = j_s+s-1$, then
we see that
\begin{equation}\label{strict}
\sum_{0 \leq i_1 < \cdots < i_n \leq k-1} q^{i_1 + i_2+ \cdots + i_n} =
q^{\binom{n}{2}}\qbinom{k}{n}.
\end{equation}
It follows from (\ref{strict}) that we can interpret the term
$\prod_{j=1}^{\ell(\mu)} r^{\binom{b_j}{2}}\rbinom{k}{b_j}$ as the
sum of the weights of  fillings  $w= w_1 \ldots w_{n}$ where the
elements of $w$ are between 0 and $k-1$ and are strictly
increasing in each brick and where we weight $w$ by $r^{w_1+
\cdots +w_{n}}$. Finally, we interpret $\prod_{j=1}^{\ell(\mu)}
(x-1)^{b_j-1}$ as all ways of picking a label $x$ or $-1$ for each
of the cells of each brick except the final cell. For
completeness, we label the final cell of each brick with $1$. We
shall call all such objects created in this way filled labelled
brick tabloids and let $\mathcal{H}_{n}$ denote the set of all
filled labelled brick tabloids that arise in this way. Thus a $C
\in \mathcal{H}_{n}$ consists of a brick tabloid $T$, a
permutation $\sg \in S_{n}$, a sequence $w \in
\{0,\ldots,k-1\}^{n}$, and a labelling $L$ of the cells of $T$
with elements from $\{x,1,-1\}$ such that
\begin{enumerate}
\item $\sg$ is strictly increasing in each brick, \item $w$ is
strictly increasing  in each brick, \item the final cell of each
brick is labelled with 1, and \item each cell which is not a final
cell of a brick is labelled with x or $-1$.
\end{enumerate}
We then define the weight $w(C)$ of $C$ to be
$q^{\inv{\sg}}p^{\coinv{\sg}} r^{||w||}$ times the product of all
the $x$ labels in $L$ and the sign $sgn(C)$ of $C$ to be
the product of all the $-1$ labels in $L$. For example,
if $n =12$, $k=5$, and $T =(4,3,3,2)$, then Figure \ref{figure:sris1}
pictures such a composite object $C \in \mathcal{H}_{12}$ where
$w(C) = q^{24}p^{42}r^{20}x^5$ and $sgn(C) =-1$.

Thus
\begin{equation}\label{sris4}
[n]_{p,q}!\Gamma_s(h_{n}) = \sum_{C \in \mathcal{H}_{n}}
sgn(C) w(C).
\end{equation}

\fig{sris1}{A composite object $C \in \mathcal{H}_{12}$.}

Next we define a weight-preserving sign-reversing involution
$I_s:\mathcal{H}_{n} \rightarrow \mathcal{H}_{n}$.  To define
$I_s(C)$, we scan the cells of $C =(T,\sg,w,L)$ from left to right
looking for the leftmost cell $t$ such that either (i) $t$ is
labelled with $-1$ or (ii) $t$ is at the end a brick $b_j$ and the
brick $b_{j+1}$ immediately following $b_j$ has the property that
$\sg$ is strictly increasing in all the cells corresponding to
$b_j$ and $b_{j+1}$ and $w$ is strictly in all the cells
corresponding to $b_j$ and $b_{j+1}$.  In case (i), $I_s(C)
=(T',\sg',w',L')$ where $T'$ is the result of  replacing the brick
$b$ in $T$ containing $t$ by two bricks $b^*$ and $b^{**}$ where
$b^*$ contains the cell $t$ plus all the cells in $b$ to the left
of $t$ and $b^{**}$ contains all the cells of $b$ to the right of
$t$, $\sg =\sg'$, $w = w'$, and $L'$ is the labelling that results
from $L$ by changing the label of cell $t$ from $-1$ to $1$. In
case (ii), $I_s(C) =(T',\sg',r',L')$ where $T'$ is the result of
replacing the bricks $b_j$ and $b_{j+1}$ in $T$ by a single brick
$b,$ $\sg =\sg'$, $w = w'$, and $L'$ is the labelling that results
from $L$ by changing the label of cell $t$ from $1$ to $-1$. If
neither case (i) or case (ii) applies, then we let $I_s(C) =C$.
For example, if $C$ is the element of $\mathcal{H}_{12}$ pictured
in Figure \ref{figure:sris1}, then $I_s(C)$ is pictured in Figure
\ref{figure:sris2}.

\fig{sris2}{$I_s(C)$ for $C$ in Figure \ref{figure:sris1}.}

It is easy to see that $I_s$ is a weight-preserving sign-reversing
involution and hence $I_s$ shows that
\begin{equation}\label{sris5}
[n]_{p,q}!\Gamma_s(h_n) = \sum_{C \in \mathcal{H}_{n},I_s(C) = C}
sgn(C) w(C).
\end{equation}

Thus we must examine the fixed points $C = (T,\sg,w,L)$ of $I_s$.
First there can be no $-1$ labels in $L$ so that $sgn(C) =1$.
Moreover,  if $b_j$ and $b_{j+1}$ are two consecutive bricks in $T$
and $t$ is that last cell of $b_j$, then it can not be the case that
$\sg_{t} < \sg_{t+1}$ and $w_t < w_{t+1}$ since otherwise we could
combine $b_j$ and $b_{j+1}$. For any such fixed point, we associate
an element $(\sg,w) \in C_k \wr S_{n}$. For example, a fixed point
of $I_s$ is pictured in Figure \ref{figure:sris3} where
\begin{eqnarray*}
\sg &=& 2~3~4~6~9~10~11~1~8~12~5~7 \ \mbox{and} \\
w &=& 0~1~2~3~0~1~4~0~1~3~3~4.
\end{eqnarray*}
It follows that if cell $t$ is at the end of a brick, then $t \not
\in SRis((\sg,\ep))$. However if $v$ is a cell which is not at the
end of a brick, then our definitions force $\sg_{v} < \sg_{v+1}$
and $w_v <w_{v+1}$  so that $v \in SRis((\sg,\ep))$. Since each
such cell $v$ must be labelled with an $x$, it follows that
$sgn(C)w(C) =
q^{inv(\sg)}p^{coinv(\sg)}r^{||w||}x^{\sris{(\sg,\ep)}}$.
  Vice versa, if
$(\sg,w) \in C_k \wr S_{n}$, then we can create a fixed point $C
=(T,\sg,w,L)$ by having the bricks in $T$ end at cells of the form
$t$ where $t \not \in SRis((\sg,\ep))$, and labelling each cell $t
\in SRis((\sg,\ep))$ with $x$ and labelling the remaining cells
with $1$. Thus we have shown that
\begin{equation*}
[n]_{p,q}!\Gamma_s(h_n) = \sum_{(\sg,w) \in C_k \wr S_{n}}
q^{\inv{\sg}}p^{\coinv{\sg}} r^{||w||} x^{\sris{(\sg,w)}}
\end{equation*}
as desired.

\fig{sris3}{A fixed point of $I_s$.}

Applying $\Gamma_s$ to the identity $H(t) = (E(-t))^{-1}$, we get
\begin{eqnarray*}
\sum_{n \geq 0} \Gamma_s(h_n) t^n &=& \sum_{n\geq 0} \frac{t^{n}}{[n]_{p,q}!} \sum_{(\sg,w)\in C_k\wr S_{n}} q^{\inv{\sg}}p^{\coinv{\sg}}
r^{||w||}x^{\sris{(\sg,w)}} \\
&=& \frac{1}{1+\sum_{n\geq 1} (-t)^n\Gamma_s(e_n)} \\
&=& \frac{1}{1+\sum_{m\geq 1}(-1)^{m} t^{m}
\frac{(-1)^{m-1}(x-1)^{m-1}p^{\binom{m}{2}}}{[m]_{p,q}!}r^{\binom{m}{2}}\rbinom{k}{m}} \\
&=&
\frac{1-x}{1-x + \sum_{m\geq 1}\frac{p^{\binom{m}{2}}(x-1)^mt^{m}}{[m]_{p,q}!}
r^{\binom{m}{2}}\rbinom{k}{m}}
\end{eqnarray*}
which proves (\ref{eq:pqsris}).
\end{proof}

We end this section by proving a generating function
which specializes to (\ref{Up}).

\begin{theorem}\label{thm:pqUris}
Let
$\Upsilon_{{\bf d}} = \{(1~2,0~1), (1~2,1~0)\}$
For all $k \geq 2$,
\begin{eqnarray}\label{eq:pqUris}
D_k^{\Upsilon_{{\bf d}}}(x,p,q,r,t) &=& \sum_{n \geq 0} \frac{t^n}{[n]_{p,q}!} \sum_{(\sg,w) \in C_k \wr S_n}
q^{\inv{\sg}}p^{\coinv{\sg}}x^{\Umch{(\sg,w)}}
\nonumber \\
&=&
\frac{(k-1)(1-x)}{(k-1)(1-x) + k\sum_{n \geq 1} \frac{p^{\binom{n}{2}}((k-1)(x-1)t)^n}{[n]_{p,q}!}}.
\end{eqnarray}
\end{theorem}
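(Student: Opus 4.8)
The plan is to follow verbatim the strategy of the proofs of Theorems \ref{thm:pqris}--\ref{thm:pqsris}: apply a suitably chosen ring homomorphism $\Gamma_d:\Lambda \to \mathbb{Q}(p,q,x)$ to the identity $H(t) = 1/E(-t)$. Recall from the introduction that $(\sg,w)$ has a $\Upsilon_{{\bf d}}$-bi-match starting at position $i$ exactly when $\sg_i < \sg_{i+1}$ and $w_i \neq w_{i+1}$. Guided by the pattern of the earlier proofs, in which the ``$w$-factor'' of $\Gamma(e_n)$ counts the admissible fillings of a single brick of length $n$, I would set $\Gamma_d(e_0) = 1$ and
\[
\Gamma_d(e_n) = (-1)^{n-1}(x-1)^{n-1}\,\frac{k(k-1)^{n-1}}{[n]_{p,q}!}\,p^{\binom{n}{2}}
\qquad (n \geq 1),
\]
where $k(k-1)^{n-1}$ is the number of words in $\{0,\dots,k-1\}^n$ with no two consecutive letters equal; no $r$ appears since $||w||$ is not being tracked here. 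The central claim is then
\[
[n]_{p,q}!\,\Gamma_d(h_n) = \sum_{(\sg,w) \in C_k \wr S_n} q^{\inv{\sg}}p^{\coinv{\sg}}x^{\Umch{(\sg,w)}},\qquad n \geq 1.
\]

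To prove the claim I would expand $h_n$ via $(\ref{omar})$; as in the earlier proofs the sign $(-1)^{n-\ell(\mu)}\prod_j(-1)^{b_j-1}$ collapses to $1$, reducing $[n]_{p,q}!\,\Gamma_d(h_n)$ to a sum over brick tabloids $T = (b_1,\dots,b_{\ell(\mu)})$ of $p^{\sum_j\binom{b_j}{2}}\pqbinom{n}{b_1,\dots,b_{\ell(\mu)}}\prod_j (x-1)^{b_j-1}k(k-1)^{b_j-1}$. By Lemma \ref{Carlitz} the first factor records, weighted by $q^{\inv{\sg}}p^{\coinv{\sg}}$, all fillings of $T$ by a permutation $\sg$ strictly increasing within each brick. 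The one genuinely new step is the interpretation of $\prod_j k(k-1)^{b_j-1}$: read it as filling $T$ by a word $w \in \{0,\dots,k-1\}^n$ whose consecutive entries agree on no edge internal to a brick (the first cell of a brick is free, each later cell avoids its left neighbour's value). Labelling the non-final cells of each brick by $x$ or $-1$ and the final cells by $1$ accounts for $\prod_j(x-1)^{b_j-1}$. This presents $[n]_{p,q}!\,\Gamma_d(h_n)$ as $\sum_C \mathrm{sgn}(C)\,w(C)$ over filled labelled brick tabloids $C = (T,\sg,w,L)$, with $w(C)$ equal to $q^{\inv{\sg}}p^{\coinv{\sg}}$ times the product of the $x$-labels and $\mathrm{sgn}(C)$ the product of the $-1$-labels.

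Next I would define a weight-preserving, sign-reversing involution $I_d$ on this set, identical to the map $I$ from the proof of Theorem \ref{thm:pqris} except that the merge condition (ii) becomes: $t$ is the last cell of a brick $b_j$, the brick $b_{j+1}$ exists, $\sg_t < \sg_{t+1}$, and $w_t \neq w_{t+1}$. These are exactly the constraints under which merging $b_j$ with $b_{j+1}$ again produces a legal object (the merged brick stays $\sg$-increasing and stays $w$-locally-distinct at the new internal edge), so $I_d$ is well defined and an involution. At a fixed point there are no $-1$-labels, and for the associated $(\sg,w)$ a cell is a non-final cell of its brick if and only if it is a $\Upsilon_{{\bf d}}$-bi-match position; running the construction backwards shows each $(\sg,w) \in C_k \wr S_n$ comes from exactly one fixed point, for which $\mathrm{sgn}(C)w(C) = q^{\inv{\sg}}p^{\coinv{\sg}}x^{\Umch{(\sg,w)}}$. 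This establishes the central claim.

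Finally, applying $\Gamma_d$ to $H(t) = 1/E(-t)$ gives $\sum_n \Gamma_d(h_n)t^n = \bigl(1 + \sum_{n\geq 1}(-t)^n\Gamma_d(e_n)\bigr)^{-1}$; using $(-t)^n(-1)^{n-1} = -t^n$ and rewriting $(x-1)^{n-1}(k-1)^{n-1}t^n = ((k-1)(x-1)t)^n/((k-1)(x-1))$ turns the denominator into $1 - \frac{k}{(k-1)(x-1)}\sum_{n\geq 1}\frac{p^{\binom{n}{2}}((k-1)(x-1)t)^n}{[n]_{p,q}!}$, and clearing this fraction, then multiplying numerator and denominator by $-1$, yields $(\ref{eq:pqUris})$. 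The only real obstacle is the new combinatorial bookkeeping — confirming that $k(k-1)^{b_j-1}$ counts locally-distinct words on a brick and that condition (ii) is precisely what a merge requires — after which the remainder is a direct transcription of the earlier proofs.
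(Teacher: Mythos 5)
Your proposal is correct and coincides with the paper's own proof: the homomorphism you call $\Gamma_d$ is exactly the paper's $\Gamma_U$, the interpretation of $k(k-1)^{b_j-1}$ as locally-distinct words on a brick, the involution with merge condition $\sg_t<\sg_{t+1}$ and $w_t\neq w_{t+1}$, and the final algebraic simplification are all the same. No gaps.
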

 \begin{proof}
Define a ring homomorphism
$\Gamma_U:\Lambda \rightarrow \mathbb{Q}(p,q,r,x)$ by setting $\Gamma_U(e_0) =1$ and
\begin{equation}
\Gamma_U(e_n) = (-1)^{n-1}(x-1)^{n-1}\frac{k(k-1)^{n-1}}{[n]_{p,q}!}
p^{\binom{n}{2}}
\end{equation}
for $n \geq 1$. Then we claim that
\begin{equation}\label{Uris1}
[n]_{p,q}!\Gamma_U(h_n) = \sum_{(\sg,w) \in
C_k \wr S_{n}}
q^{\inv{\sg}}p^{\coinv{\sg}} x^{\Umch{(\sg,w)}}
\end{equation}
for all $n \geq 1$.
That is,
\begin{eqnarray}\label{Uris2}
&&\ [n]_{p,q}!\Gamma_U(h_{n}) = \nonumber \\
&& \ [n]_{p,q}! \sum_{\mu\vdash n} (-1)^{n-\ell(\mu)}B_{\mu, (n)}\Gamma_U(e_{\mu}) = \nonumber \\
&& \ [n]_{p,q}! \sum_{\mu \vdash n} (-1)^{n-\ell(\mu)} \sum_{(b_1,
\ldots, b_{\ell(\mu)}) \in \mathcal{B}_{\mu,n}}
\prod_{j=1}^{\ell(\mu)} (-1)^{b_j-1}(x-1)^{b_j-1}
\frac{k(k-1)^{b_j-1}}{[b_j]_{p,q}!} p^{\binom{b_j}{2}}= \nonumber \\
&& \ \sum_{\mu \vdash n} \sum_{(b_1, \ldots, b_{\ell(\mu)}) \in \mathcal{B}_{\mu,n}} p^{\sum_{j=1}^{\ell(\mu)} \binom{b_j}{2}}\pqbinom{n}{b_1,\ldots,b_{\ell(\mu)}} \prod_{j=1}^{\ell(\mu)} (x-1)^{b_j-1}
k(k-1)^{b_j-1}.
\end{eqnarray}

Next we want to give a combinatorial interpretation to
(\ref{Uris2}). By Lemma \ref{Carlitz} for each brick tabloid $T=
(b_1, \ldots, b_{\ell(\mu)})$, we can interpret
$p^{\sum_{j=1}^{\ell(\mu)} \binom{b_j}{2}}
\pqbinom{n}{b_1,\ldots,b_{\ell(\mu)}}$ as the sum of the weights
of all fillings of $T$ with a permutation $\sg \in S_{n}$ such
that $\sg$ is increasing in each brick and we weight $\sg$ by
$q^{\inv{\sg}}p^{\coinv{\sg}}$. For any $n$, there are
$k(k-1)^{n-1}$ words $w=w_1 w_2 \ldots w_n \in [k]^n$ such that
for $1 \leq i <n$, $w_i \neq w_{i+1}$. That is, we have $k$
choices for the first letter $w_1$, but then, for any given $i$,
we have only $k-1$ choices for $w_{i+1}$ since $w_{i+1}$ cannot
equal $w_i$. Thus we can interpret $\prod_{j=1}^{\ell(\mu)}
k(k-1)^{b_j-1}$ as the number of words $w_1 \ldots w_n$ so that
within any brick, there are never two consecutive letters of $w$
which are equal. Finally, we interpret $\prod_{j=1}^{\ell(\mu)}
(x-1)^{b_j-1}$ as all ways of picking a label of the cells of each
brick except the final cell with either an $x$ or a $-1$. For
completeness, we label the final cell of each brick with $1$. We
shall call all such objects created in this way filled labelled
brick tabloids and let $\mathcal{K}_{n}$ denote the set of all
filled labelled brick tabloids that arise in this way.  Thus a $C
\in \mathcal{K}_{n}$ consists of a brick tabloid $T$, a
permutation $\sg \in S_{n}$, a sequence $w \in
\{0,\ldots,k-1\}^{n}$, and a labelling $L$ of the cells of $T$
with elements from $\{x,1,-1\}$ such that
\begin{enumerate}
\item $\sg$ is strictly increasing in each brick, \item $w$ is
such that there are never two consecutive letters that lie in the
same brick which are equal, \item the final cell of each brick is
labelled with 1, and \item each cell which is not a final cell of
a brick is labelled with x or $-1$.
\end{enumerate}
We then define the weight $w(C)$ of $C$ to be
$q^{\inv{\sg}}p^{\coinv{\sg}}$ times the product of all the $x$
labels in $L$ and the sign $sgn(C)$ of $C$ to be the product of all
the $-1$ labels in $L$. For example, if $n =12$, $k=5$, and $T
=(4,3,3,2)$, then Figure \ref{figure:Uris1} pictures such a
composite object $C \in \mathcal{K}_{12}$ where $w(C) =
q^{24}p^{42}x^5$ and $sgn(C) =-1$.

Thus
\begin{equation}\label{Uris4}
[n]_{p,q}!\Gamma_U(h_{n}) = \sum_{C \in \mathcal{K}_{n}}
sgn(C) w(C).
\end{equation}

\fig{Uris1}{A composite object $C \in \mathcal{K}_{12}$.}

Next we define a weight-preserving sign-reversing involution
$I_U:\mathcal{K}_{n} \rightarrow \mathcal{K}_{n}$.  To define
$I_U(C)$, we scan the cells of $C =(T,\sg,w,L)$ from left to right
looking for the leftmost cell $t$ such that either (i) $t$ is
labelled with $-1$ or (ii) $t$ is at the end a brick $b_j$ and the
brick $b_{j+1}$ immediately following $b_j$ has the property that
 $\sg$ is strictly increasing in all the cells corresponding to
$b_j$ and $b_{j+1}$ and there are never two consecutive elements
of $w$ that are equal in all the cells corresponding to $b_j$ and
$b_{j+1}$.  In case (i), $I_U(C) =(T',\sg',w',L')$ where $T'$ is
the result of  replacing the brick $b$ in $T$ containing $t$ by
two bricks $b^*$ and $b^{**}$ where $b^*$ contains the cell $t$
plus all the cells in $b$ to the left of $t$ and $b^{**}$ contains
all the cells of $b$ to the right of $t$, $\sg =\sg'$, $w = w'$,
and $L'$ is the labelling that results from $L$ by changing the
label of cell $t$ from $-1$ to $1$. In case (ii), $I_U(C)
=(T',\sg',r',L')$ where $T'$ is the result of replacing the bricks
$b_j$ and $b_{j+1}$ in $T$ by a single brick $b,$ $\sg =\sg'$, $w
= w'$, and $L'$ is the labelling that results from $L$ by changing
the label of cell $t$ from $1$ to $-1$. If neither case (i) or
case (ii) applies, then we let $I_U(C) =C$. For example, if $C$ is
the element of $\mathcal{K}_{12}$ pictured in Figure
\ref{figure:Uris1}, then $I_U(C)$ is pictured in Figure
\ref{figure:Uris2}.

\fig{Uris2}{$I_U(C)$ for $C$ in Figure \ref{figure:Uris1}.}

It is easy to see that $I_U$ is a weight-preserving sign-reversing
involution and hence $I_U$ shows that
\begin{equation}\label{Uris5}
[n]_{p,q}!\Gamma_U(h_n) = \sum_{C \in \mathcal{K}_{n},I_U(C) = C}
sgn(C) w(C).
\end{equation}

Thus we must examine the fixed points $C = (T,\sg,w,L)$ of $I_U$.
First there can be no $-1$ labels in $L$ so that $sgn(C) =1$.
Moreover,  if $b_j$ and $b_{j+1}$ are two consecutive bricks in $T$
and $t$ is the last cell of $b_j$, then it can not be the case that
$\sg_{t} < \sg_{t+1}$ and $w_t \neq w_{t+1}$ since otherwise we
could combine $b_j$ and $b_{j+1}$. For any such fixed point, we
associate an element $(\sg,w) \in C_k \wr S_{n}$. For example, a
fixed point of $I_U$ is pictured in Figure \ref{figure:Uris3} where
\begin{eqnarray*}
\sg &=& 2~3~4~6~9~10~11~1~8~12~5~7 \ \mbox{and} \\
w &=& 0~1~3~1~1~0~3~3~2~3~3~0.
\end{eqnarray*}
It follows that if cell $t$ is at the end of a brick, then there
is no $\Upsilon$-match in $(\sg,w)$ starting at position $t$.
However if $v$ is a cell which is not at the end of a brick, then
our definitions force $\sg_{v} < \sg_{v+1}$ and $w_v \neq w_{v+1}$
so that there is $\Upsilon$-match in $(\sg,w)$ starting at
position $v$. Since each such cell $v$ must be labelled with an
$x$, it follows that $sgn(C)w(C) =
q^{inv(\sg)}p^{coinv(\sg)}x^{\Umch{(\sg,\ep)}}$.
  Vice versa, if
$(\sg,w) \in C_k \wr S_{n}$, then we can create a fixed point $C
=(T,\sg,w,L)$ by having the bricks in $T$ end at cells of the form
$t$ where there is no $\Upsilon$-match in $(\sg,w)$ starting at
position $t$, and labelling each cell $t$ where there is an
$\Upsilon$-match in $(\sg,w)$ starting at position $t$ with $x$
and labelling the remaining cells with $1$. Thus we have shown
that
\begin{equation*}
[n]_{p,q}!\Gamma_U(h_n) = \sum_{(\sg,w) \in C_k \wr S_{n}}
q^{\inv{\sg}}p^{\coinv{\sg}} x^{\Umch{(\sg,w)}}
\end{equation*}
as desired.

\fig{Uris3}{A fixed point of $I_U$.}

Applying $\Gamma_U$ to the identity $H(t) = (E(-t))^{-1}$, we get
\begin{eqnarray*}
\sum_{n \geq 0} \Gamma_U(h_n) t^n &=& \sum_{n\geq 0} \frac{t^{n}}{[n]_{p,q}!} \sum_{(\sg,w)\in C_k\wr S_{n}} q^{\inv{\sg}}p^{\coinv{\sg}}
x^{\Umch{(\sg,w)}} \\
&=& \frac{1}{1+\sum_{n\geq 1} (-t)^n\Gamma_U(e_n)} \\
&=& \frac{1}{1+\sum_{m\geq 1}(-1)^{m} t^{m}
\frac{(-1)^{m-1}(x-1)^{m-1}p^{\binom{m}{2}}}{[m]_{p,q}!}k(k-1)^{m-1} }\\
&=&
\frac{(k-1)(1-x)}{(k-1)(1-x) + k\sum_{m\geq 1}\frac{p^{\binom{m}{2}}
((k-1)(x-1)t)^m}{[m]_{p,q}!}}
\end{eqnarray*}
which proves (\ref{eq:pqUris}).
\end{proof}

\section{Distribution of non-overlapping $\Upsilon$-bi-matches}

In this section we provide arguments similar to those
in~\cite[Sect. 4]{K} to determine the generating function for
$\Ulap{(\sg,w)}$, the maximum number of non-overlapping
$\Upsilon$-bi-matches. That is, suppose that $\Upsilon \subseteq
C_k \wr S_j$. Recall that
\begin{equation}\label{defNU1}
N_k^{\Upsilon}(x,p,q,r,t) = \sum_{n \geq 0} \frac{t^n}{[n]_{p,q}!}
\sum_{(\sg,w) \in C_k \wr S_n} q^{\inv{\sg}}p^{\coinv{\sg}}r^{||w||}
x^{\Ulap{(\sg,w)}},
\end{equation}
and
\begin{equation}\label{AU1}
A_k^{\Upsilon}(p,q,r,t) = \sum_{n \geq 0} \frac{t^n}{[n]_{p,q}!}
\sum_{(\sg,w) \in C_k \wr S_n} q^{\inv{\sg}}p^{\coinv{\sg}}r^{||w||}
\chi( \Umch{(\sg,w)}=0).
\end{equation}
Let $(C_k \wr S_n)_{\Umch{end}}$ denote the set of all
$(\sg,w)$ such that $(\sg,w)$ has exactly one
$\Upsilon$-match which occurs at the end of $(\sg,w)$, i.e. the unique $\Upsilon$-match in $(\sg,w)$ starts
at position $n-j+1$. We then let
\begin{equation}\label{BU1}
B_k^{\Upsilon}(p,q,r,t) = \sum_{n \geq 1} \frac{t^n}{[n]_{p,q}!}
\sum_{(\sg,w) \in (C_k \wr S_n)_{\Umch{end}}}
q^{\inv{\sg}}p^{\coinv{\sg}}r^{||w||}.
\end{equation}

\begin{lemma}\label{lem01} For all $k \ge2$, we  have $B_k^{\Upsilon}(p,q,r,t)=([k]_rt-1)
A_k^{\Upsilon}(p,q,r,t)+1$.
\end{lemma}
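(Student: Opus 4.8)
The plan is to prove the equivalent statement
$$[k]_r t\cdot A_k^{\Upsilon}(p,q,r,t)=\bigl(A_k^{\Upsilon}(p,q,r,t)-1\bigr)+B_k^{\Upsilon}(p,q,r,t),$$
which rearranges at once to the claimed formula. Write $\mathcal{A}_n$ for the set of $(\sg,w)\in C_k\wr S_n$ with $\Umch{(\sg,w)}=0$, write $\mathcal{B}_n=(C_k\wr S_n)_{\Umch{end}}$, and abbreviate $\omega(\sg,w)=q^{\inv{\sg}}p^{\coinv{\sg}}r^{||w||}$. Since $[k]_r t=[k]_r\,t^1/[1]_{p,q}!$ and the Cauchy-type product of two $p,q$-exponential series picks up the coefficient $\pqbinom{n}{1}=[n]_{p,q}$, comparing coefficients of $t^n/[n]_{p,q}!$ reduces the whole statement to the finite identity
$$[n]_{p,q}\,[k]_r\!\!\sum_{(\sg',w')\in\mathcal{A}_{n-1}}\!\!\omega(\sg',w') \;=\; \sum_{(\sg,w)\in\mathcal{A}_n}\!\!\omega(\sg,w)\;+\;\sum_{(\sg,w)\in\mathcal{B}_n}\!\!\omega(\sg,w)$$
for every $n\ge 1$; the $n=0$ term is exactly the constant $-1$, because $\mathcal{B}_0=\emptyset$ and the empty element of $C_k\wr S_0$ contributes $1$ to $A_k^{\Upsilon}$ but is never produced by the construction below.

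The construction is ``append one pair at the right end.'' Given $(\sg',w')\in\mathcal{A}_{n-1}$, a value $v\in\{1,\dots,n\}$, and a letter $c\in\{0,\dots,k-1\}$, I would build $(\sg,w)\in C_k\wr S_n$ by setting $w=w'_1\cdots w'_{n-1}\,c$ and letting $\sg$ be the permutation of $\{1,\dots,n\}$ whose first $n-1$ entries are order-isomorphic to $\sg'$ using the values $\{1,\dots,n\}\setminus\{v\}$ and whose last entry is $v$. Relabelling preserves relative order among the first $n-1$ entries, the new last entry $v$ contributes exactly $n-v$ inversions and $v-1$ coinversions, and $||w||=||w'||+c$, so
$$\omega(\sg,w)=q^{\,n-v}\,p^{\,v-1}\,r^{c}\cdot\omega(\sg',w').$$
Summing over all $v$ and $c$ replaces the prefactor by $\bigl(\sum_{v=1}^{n}q^{n-v}p^{v-1}\bigr)\bigl(\sum_{c=0}^{k-1}r^{c}\bigr)=[n]_{p,q}\,[k]_r$, so the left side of the finite identity is precisely the $\omega$-weighted count of all triples $\bigl((\sg',w'),v,c\bigr)$.

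It remains to identify this construction as a bijection onto $\mathcal{A}_n\sqcup\mathcal{B}_n$. The one structural fact I would isolate is that in a length-$n$ word a $\Upsilon$-match occupying the last position must start at a position $p$ with simultaneously $p\le n-j+1$ and $p\ge n-j+1$, hence only at $p=n-j+1$. Consequently, appending a pair to $(\sg',w')\in\mathcal{A}_{n-1}$ can create at most one new match, which is forced to be the end match; so the constructed $(\sg,w)$ lies in $\mathcal{A}_n$ if no end match appears and in $\mathcal{B}_n$ if one does. Conversely, from any $(\sg,w)\in\mathcal{A}_n\cup\mathcal{B}_n$, deleting the last position and reducing yields $(\sg',w')$ with no $\Upsilon$-match (any such match would sit at a position $\le n-j$, where $(\sg,w)$ has none), and $v=\sg_n$, $c=w_n$ recover $(\sg,w)$. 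This makes the construction a weight-compatible bijection, proving the finite identity; multiplying by $t^n/[n]_{p,q}!$ and summing over $n\ge 1$ gives $[k]_r t\,A_k^{\Upsilon}=A_k^{\Upsilon}-1+B_k^{\Upsilon}$ and hence the lemma.

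I expect the only real obstacle to be the clean verification that the image of the append map is \emph{exactly} $\mathcal{A}_n\sqcup\mathcal{B}_n$ and nothing more — that appending can never introduce two or more $\Upsilon$-matches, and that each element of $\mathcal{B}_n$ genuinely restricts into $\mathcal{A}_{n-1}$ — which is where the ``$p=n-j+1$ only'' observation is essential; the inversion/coinversion accounting and the translation back into generating functions are then routine.
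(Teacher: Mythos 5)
Your proof is correct and follows essentially the same route as the paper: both append a new last entry (value and sign) to a match-free element of $C_k\wr S_{n-1}$, observe that the new entry contributes $[n]_{p,q}[k]_r$ to the weight and can create at most one new $\Upsilon$-match, necessarily at the end, and then translate the resulting coefficient identity into $[k]_r t\,A_k^{\Upsilon}=A_k^{\Upsilon}-1+B_k^{\Upsilon}$. Your write-up is in fact slightly more careful than the paper's about verifying that the append map is a bijection onto $\mathcal{A}_n\sqcup\mathcal{B}_n$, but the underlying argument is identical.
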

\begin{proof}
Suppose that $(\sg,w) \in C_k \wr S_{n-1}$, let $\sg^j$ be the
result of replacing $j, \ldots ,n-1$ in $\sg$ by $j+1,\ldots, n$
respectively and then adding $j$ at the end. For example, if $\sg  =
1~3~4~2$, then $\sg^2 = 1~4~5~3~2$. Clearly,
\begin{eqnarray*}
&&\sum_{i=0}^{k-1} \sum_{j=1}^{n} q^{\inv{(\sg^j,wi)}}
p^{\coinv{(\sg^j,wi)}} r^{||wi||} = \\
&& (1+r+\cdots + r^{k-1})(p^{n-1} +qp^{n-2} +
\cdots + pq^{n-2}+q^{n-1}) q^{\inv{(\sg,w)}}  p^{\coinv{(\sg,w)}} r^{||w||}
= \\
&&[k]_r [n]_{p,q} q^{\inv{(\sg,w)}}  p^{\coinv{(\sg,w)}} r^{||w||}.
\end{eqnarray*}

Now if $(\sg,w) \in C_k \wr S_{n-1}$ and
$\Umch{(\sg,w)} =0$, then for any $0 \leq i \leq k-1$ and $1 \leq j \leq n-1$,
the pair $(\sg^j,wi)$ either has no $\Upsilon$-match or
has exactly one $\Upsilon$-match which occurs at the end.
It follows that
\begin{equation}\label{A=A+B}
[k]_r [n]_{p,q}! A_k^{\Upsilon}(p,q,r,t)|_{\frac{t^{n-1}}{[n-1]_{p,q}!}} =
 A_k^{\Upsilon}(p,q,r,t)|_{\frac{t^{n}}{[n]_{p,q}!}}+
 B_k^{\Upsilon}(p,q,r,t)|_{\frac{t^{n}}{[n]_{p,q}!}}.
\end{equation}
If we multiply both sides of (\ref{A=A+B}) by $\frac{t^n}{[n]_{p,q}!}$ and
sum for $n \geq 1$, we get that
\begin{equation*}
[k]_rt A_k^{\Upsilon}(p,q,r,t) = A_k^{\Upsilon}(p,q,r,t)-1 + B_k^{\Upsilon}(p,q,r,t)
\end{equation*}
or that
\begin{equation*}
B_k^{\Upsilon}(p,q,r,t) = 1+ ([k]_rt -1) A_k^{\Upsilon}(p,q,r,t).
\end{equation*}
\end{proof}

\begin{theorem}\label{overlap} For all $\Upsilon\  \subseteq
C_k \wr S_j$ and $k \geq 2$,
\begin{equation}\label{eq:nlap}
N_k^{\Upsilon}(x,p,q,r,t) = \frac{A_k^{\Upsilon}(p,q,r,t)}{1
-x(1+([k]_rt -1)A_k^{\Upsilon}(p,q,r,t))}.
\end{equation}
\end{theorem}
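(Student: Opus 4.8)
The plan is to establish the single functional equation
\[
N_k^{\Upsilon}(x,p,q,r,t)=A_k^{\Upsilon}(p,q,r,t)+x\,B_k^{\Upsilon}(p,q,r,t)\,N_k^{\Upsilon}(x,p,q,r,t)
\]
and then combine it with Lemma \ref{lem01}. Granting this equation, one solves $N_k^{\Upsilon}=A_k^{\Upsilon}/(1-xB_k^{\Upsilon})$ and substitutes $B_k^{\Upsilon}=1+([k]_rt-1)A_k^{\Upsilon}$, which produces exactly \eqref{eq:nlap}; this last step is pure algebra.

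To prove the functional equation I would decompose each $(\sg,w)\in C_k\wr S_n$ by a greedy, left-to-right selection of non-overlapping $\Upsilon$-bi-matches. Let $S$ be the set of positions at which some $\Upsilon$-bi-match of $(\sg,w)$ starts. Because all $\Upsilon$-bi-matches have the same length $j$, the number $\Ulap{(\sg,w)}$ is the largest size of a subset of $S$ whose elements are pairwise at distance at least $j$, and a standard exchange argument shows that this maximum is realized by the greedy sequence $p_1<p_2<\dots<p_\ell$ with $p_1=\min S$ and $p_{i+1}=\min\bigl(S\cap[p_i+j,\infty)\bigr)$. If $\ell=0$, then $(\sg,w)$ has no $\Upsilon$-bi-match and is one of the elements counted by $A_k^{\Upsilon}$. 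If $\ell\ge1$, cut $(\sg,w)$ immediately after position $p_1+j-1$: let $P$ be the standardization, in $C_k\wr S_{p_1+j-1}$, of the first $p_1+j-1$ letters and $R$ the standardization of the remaining ones. The key observation is that every window of length $j$ starting at a position $\le p_1$ lies entirely inside the first block and its reduction does not see the letters outside the block; hence, considered on its own, $P$ has exactly one $\Upsilon$-bi-match, occupying its last $j$ positions, i.e. $P\in(C_k\wr S_{p_1+j-1})_{\Umch{end}}$. The $\Upsilon$-bi-matches of $(\sg,w)$ that straddle the cut begin at positions in $\{p_1+1,\dots,p_1+j-1\}$, hence overlap the bi-match of $P$, so they affect neither $\Ulap$ nor the greedy procedure; continuing greedily on what follows shows $\Ulap{(\sg,w)}=1+\Ulap{R}$ with $R$ arbitrary in $C_k\wr S_{n-p_1-j+1}$.

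Conversely, for fixed $P\in(C_k\wr S_a)_{\Umch{end}}$ and $R\in C_k\wr S_b$, I would check that an element $(\sg,w)\in C_k\wr S_{a+b}$ whose restriction to positions $1,\dots,a$ standardizes to $P$ and whose restriction to positions $a+1,\dots,a+b$ standardizes to $R$ automatically has first greedy block $P$ and remainder $R$, since the windows contained in $\{1,\dots,a\}$ exhibit exactly the pattern of $P$, so the first bi-match of $(\sg,w)$ occurs where that of $P$ does. Summing $q^{\inv{\sg}}p^{\coinv{\sg}}r^{||w||}$ over all such $(\sg,w)$ with $P$ and $R$ held fixed factors as $\pqbinom{a+b}{a}$ times the weight of $P$ times the weight of $R$: the relative orders inside the two blocks account for the $\mathrm{inv}$ and $\mathrm{coinv}$ of $P$ and of $R$, the sign words concatenate, and the choice of how the two blocks of values interleave contributes the $p,q$-binomial coefficient, by the $p,q$-analogue of Carlitz's identity already used in Lemma \ref{Carlitz}. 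Since the unique bi-match of $P$ adds $1$ to $\Ulap$ while $R$ contributes $\Ulap{R}$ more, this first block carries a factor of $x$. In terms of the generating functions $N_k^{\Upsilon}$, $A_k^{\Upsilon}$, $B_k^{\Upsilon}$, whose $t^n/[n]_{p,q}!$ normalization is exactly what turns the shuffle count $\pqbinom{a+b}{a}$ into an ordinary product, the $\ell\ge1$ part of $N_k^{\Upsilon}$ becomes $x\,B_k^{\Upsilon}\,N_k^{\Upsilon}$ and the $\ell=0$ part becomes $A_k^{\Upsilon}$, which is the asserted functional equation.

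The step I expect to demand the most care is this greedy cut together with its generating-function translation: one must verify that bi-matches straddling the boundary cannot change which positions the greedy algorithm selects in the reassembled word, and that the statistics $\mathrm{inv}$ and $\mathrm{coinv}$ split cleanly across the cut into the contributions of the two blocks plus the shuffle statistic counted by $\pqbinom{a+b}{a}$. Both facts are the natural analogues of what is done in \cite[Sect. 4]{K} and in the proof of Lemma \ref{lem01}, so once the bookkeeping is arranged the verification should be routine; all of the real content of the theorem lives there, while the concluding computation—solving a linear equation and inserting Lemma \ref{lem01}—is immediate.
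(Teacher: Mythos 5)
Your proposal is correct and follows essentially the same route as the paper: the paper also cuts $(\sg,w)$ immediately after each non-overlapping $\Upsilon$-bi-match counted by $\Ulap{(\sg,w)}$, obtaining blocks counted by $B_k^{\Upsilon}$ followed by a tail with no matches counted by $A_k^{\Upsilon}$, and then sums the geometric series $\sum_{i\ge 0}(xB_k^{\Upsilon})^iA_k^{\Upsilon}=A_k^{\Upsilon}/(1-xB_k^{\Upsilon})$ before invoking Lemma \ref{lem01} — which is just the unrolled form of your functional equation $N=A+xBN$. The extra details you supply (optimality of the greedy selection, invariance under straddling matches, and the $\pqbinom{a+b}{a}$ shuffle factorization of $\mathrm{inv}$ and $\mathrm{coinv}$) are exactly the bookkeeping the paper leaves implicit in the phrase ``in terms of generating functions, this says.''
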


\begin{proof}

Suppose that $\Ulap{(\sg,w)}=i\geq 0$. One can read any such
$(\sg,w)$ from left to right making a cut right after a
$\Upsilon$-bi-occurrence counted by $\Ulap{(\sg,w)}$. As the result,
one obtains $i$ signed words which have exactly one $\Upsilon$-match and
that $\Upsilon$-match occurs at the end of the word  that is
followed by a possibly empty word that has no $\Upsilon$-matches. In terms
of generating functions, this says that
\begin{eqnarray*}
&&N_k^{\Upsilon}(x,p,q,r,t) = \\
&& A_k^{\Upsilon}(p,q,r,t) +
xB_k^{\Upsilon}(p,q,r,t) A_k^{\Upsilon}(p,q,r,t)+
(x B_k^{\Upsilon}(p,q,r,t))^2 A_k^{\Upsilon}(p,q,r,t)+\cdots=\\
&&\frac{A_k^{\Upsilon}(p,q,r,t)}{1-x B_k^{\Upsilon}(p,q,r,t)}.
\end{eqnarray*}
The result then follows from Lemma~\ref{lem01}.
\end{proof}

Using our results in Section 3, we
immediately have the following corollaries setting $x =0$ in
our formulas for $D_k^\Upsilon(x,p,q,r,t)$.

\begin{corollary}\label{thm:0ris}
Let $\Upsilon_{{\bf r}} = \{(1~2,0~0),(1~2,0~1)\}$. Then for all $k \geq 2$,
\begin{equation}\label{eq:0ris}
A_k^{\Upsilon_{{\bf r}}}(p,q,r,t) = \frac{1}{1+ \sum_{n \geq 1}
\frac{p^{\binom{n}{2}} (-t)^n}{[n]_{p,q}!}\rbinom{n+k-1}{n}}.
\end{equation}
\end{corollary}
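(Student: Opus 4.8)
The plan is to recognize this corollary as nothing more than the $x=0$ specialization of Theorem~\ref{thm:pqris}, as the sentence preceding the statement already suggests. First I would record the elementary observation that $A_k^{\Upsilon}(p,q,r,t) = D_k^{\Upsilon}(0,p,q,r,t)$ for every $\Upsilon$. Comparing the definitions \eqref{defDU} and \eqref{AU}, the two series differ only in that $D_k^{\Upsilon}$ weights each $(\sg,w)$ by $x^{\Umch{(\sg,w)}}$ while $A_k^{\Upsilon}$ weights it by $\chi(\Umch{(\sg,w)}=0)$; adopting the convention $0^0=1$ we have $0^{\Umch{(\sg,w)}} = \chi(\Umch{(\sg,w)}=0)$ term by term, so the two generating functions coincide after setting $x=0$. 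For $\Upsilon = \Upsilon_{{\bf r}}$ this is consistent with the identity $\Umch{(\sg,w)} = \ris{(\sg,w)}$ noted in the introduction, which is exactly what Theorem~\ref{thm:pqris} computes.

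Next I would simply substitute $x=0$ into the closed form \eqref{eq:pqris}. Since $(x-1)^n\big|_{x=0} = (-1)^n$, the numerator $1-x$ becomes $1$ and each summand $\frac{p^{\binom{n}{2}}((x-1)t)^n}{[n]_{p,q}!}\rbinom{n+k-1}{n}$ in the denominator becomes $\frac{p^{\binom{n}{2}}(-t)^n}{[n]_{p,q}!}\rbinom{n+k-1}{n}$, giving
\begin{equation*}
A_k^{\Upsilon_{{\bf r}}}(p,q,r,t) = D_k^{\Upsilon_{{\bf r}}}(0,p,q,r,t) = \frac{1}{1 + \sum_{n \geq 1} \frac{p^{\binom{n}{2}}(-t)^n}{[n]_{p,q}!}\rbinom{n+k-1}{n}},
\end{equation*}
which is precisely \eqref{eq:0ris}.

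There is essentially no obstacle: the entire combinatorial content is carried by Theorem~\ref{thm:pqris}, and the corollary follows by a one-line evaluation. The only point that merits a word of care is the $0^0 = 1$ convention; if one prefers to avoid invoking it, the same conclusion can be read off directly from the proof of Theorem~\ref{thm:pqris} by observing that setting $x=0$ annihilates every filled labelled brick tabloid that contains a cell labelled $x$ — equivalently, after the involution $I$, every surviving fixed point must have all of its bricks of length one, which forces $\ris{(\sg,w)} = 0$ — so that only the $(\sg,w)$ with $\ris{(\sg,w)} = \Umch{(\sg,w)} = 0$ contribute. Either route yields the stated identity immediately.
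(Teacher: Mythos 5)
Your proposal is correct and follows exactly the paper's route: the paper obtains this corollary by setting $x=0$ in the formula for $D_k^{\Upsilon_{{\bf r}}}(x,p,q,r,t)$ from Theorem \ref{thm:pqris}, which is precisely your substitution. The extra remarks about the $0^0=1$ convention and the fixed points of the involution are sound but not needed beyond what the paper already asserts.
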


\begin{corollary}\label{thm:0wris}
Let $\Upsilon_{{\bf w}} = \{(1~2,0~0)\}$. Then for all $k \geq 2$,
\begin{equation}\label{eq:0wris}
A_k^{\Upsilon_{{\bf w}}}(p,q,r,t) = \frac{1}{1+ \sum_{n \geq 1}
\frac{p^{\binom{n}{2}} (-t)^n}{[n]_{p,q}!}[k]_{r^n}}.
\end{equation}
\end{corollary}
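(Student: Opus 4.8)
The plan is to obtain \eqref{eq:0wris} as the specialization $x=0$ of Theorem~\ref{thm:pqwris}, exactly as the sentence preceding the corollary indicates. First I would note that substituting $x=0$ into the generating function $D_k^{\Upsilon_{{\bf w}}}(x,p,q,r,t)$ of \eqref{defDU} recovers $A_k^{\Upsilon_{{\bf w}}}(p,q,r,t)$ of \eqref{AU}: the two defining sums differ only in that $D_k^{\Upsilon_{{\bf w}}}$ carries the factor $x^{\Umch{(\sg,w)}}$ whereas $A_k^{\Upsilon_{{\bf w}}}$ carries $\chi(\Umch{(\sg,w)}=0)$, and under the convention $0^0=1$ we have $x^{\Umch{(\sg,w)}}\big|_{x=0}=\chi(\Umch{(\sg,w)}=0)$ for every $(\sg,w)\in C_k\wr S_n$. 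Since for each fixed $n$ the inner sum is a polynomial in $x$, the substitution $x=0$ is legitimate coefficientwise in $t$, so $A_k^{\Upsilon_{{\bf w}}}(p,q,r,t)=D_k^{\Upsilon_{{\bf w}}}(0,p,q,r,t)$.

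It then remains only to put $x=0$ in the closed form \eqref{eq:pqwris}. The numerator $1-x$ becomes $1$; in the denominator $1-x$ becomes $1$ and $((x-1)t)^n$ becomes $(-t)^n$, which gives
$$A_k^{\Upsilon_{{\bf w}}}(p,q,r,t)=\frac{1}{1+\sum_{n\geq 1}\frac{p^{\binom{n}{2}}(-t)^n}{[n]_{p,q}!}[k]_{r^n}},$$
precisely the claimed identity, and valid for all $k\geq 2$ since Theorem~\ref{thm:pqwris} is.

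There is no genuine obstacle here: Corollary~\ref{thm:0wris} is a one-line consequence of Theorem~\ref{thm:pqwris}, entirely parallel to how Corollary~\ref{thm:0ris} follows from Theorem~\ref{thm:pqris}. The only detail worth making explicit is the $0^0=1$ convention used to identify $D_k^{\Upsilon_{{\bf w}}}(0,p,q,r,t)$ with $A_k^{\Upsilon_{{\bf w}}}(p,q,r,t)$, and even that is harmless because the substitution takes place inside finite sums, term by term.
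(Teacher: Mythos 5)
Your proposal is correct and is exactly the paper's argument: the corollary is obtained by setting $x=0$ in the closed form \eqref{eq:pqwris} of Theorem~\ref{thm:pqwris}, using that $x^{\Umch{(\sg,w)}}\big|_{x=0}=\chi(\Umch{(\sg,w)}=0)$. Your explicit remark about the $0^0=1$ convention is a harmless added detail the paper leaves implicit.
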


\begin{corollary}\label{thm:0sris}
Let $\Upsilon_{{\bf s}} = \{(1~2,0~1)\}$. Then for all $k \geq 2$,
\begin{equation}\label{eq:0sris}
A_k^{\Upsilon_{{\bf s}}}(p,q,r,t) = \frac{1}{1+ \sum_{n \geq 1}
\frac{p^{\binom{n}{2}}
(-t)^n}{[n]_{p,q}!}r^{\binom{n}{2}}\rbinom{n}{k}}.
\end{equation}
\end{corollary}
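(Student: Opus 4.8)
The plan is to obtain this as an immediate specialization of Theorem~\ref{thm:pqsris}. First I would note that, directly from the definition of $D_k^{\Upsilon}(x,p,q,r,t)$ in (\ref{defDU}) and of $A_k^{\Upsilon}(p,q,r,t)$ in (\ref{AU}), setting $x=0$ turns the factor $x^{\Umch{(\sg,w)}}$ into $\chi(\Umch{(\sg,w)}=0)$ (with the convention $0^0=1$), so that $D_k^{\Upsilon}(0,p,q,r,t)=A_k^{\Upsilon}(p,q,r,t)$ for every admissible $\Upsilon$; in particular this holds for $\Upsilon=\Upsilon_{{\bf s}}=\{(1~2,0~1)\}$.

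Next I would substitute $x=0$ into the closed form for $D_k^{\Upsilon_{{\bf s}}}(x,p,q,r,t)$ supplied by (\ref{eq:pqsris}). Under this substitution $1-x$ becomes $1$ and $((x-1)t)^n$ becomes $(-t)^n$, so the right-hand side of (\ref{eq:pqsris}) collapses to
$$\frac{1}{1+\sum_{n\geq 1}\frac{p^{\binom{n}{2}}(-t)^n}{[n]_{p,q}!}\,r^{\binom{n}{2}}\rbinom{k}{n}},$$
and combining this with the identification $D_k^{\Upsilon_{{\bf s}}}(0,p,q,r,t)=A_k^{\Upsilon_{{\bf s}}}(p,q,r,t)$ from the previous paragraph yields exactly formula (\ref{eq:0sris}), with the $r$-binomial coefficient written as $\rbinom{k}{n}$ in accordance with Theorem~\ref{thm:pqsris}.

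This argument is purely a substitution into an already-proved formal power series identity, so there is no genuine obstacle to overcome. The only point that warrants a moment's attention is the bookkeeping convention $0^0=1$: it is precisely this that makes the $x=0$ specialization of $D_k^{\Upsilon}$ pick out the elements of $C_k\wr S_n$ with no $\Upsilon$-bi-match (the empty permutation included), so that the identity $D_k^{\Upsilon}(0,p,q,r,t)=A_k^{\Upsilon}(p,q,r,t)$ holds on the nose rather than up to a constant. Once that is granted the corollary is immediate, exactly as in the proofs of Corollaries~\ref{thm:0ris} and~\ref{thm:0wris}.
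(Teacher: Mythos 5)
Your proposal is correct and is exactly the paper's argument: the corollary is obtained by setting $x=0$ in the closed form of Theorem \ref{thm:pqsris}, using that $x^{\Umch{(\sg,w)}}$ specializes to $\chi(\Umch{(\sg,w)}=0)$. Note that your substitution correctly produces $\rbinom{k}{n}$, so the $\rbinom{n}{k}$ appearing in the printed statement of the corollary is a typo for $\rbinom{k}{n}$.
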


Thus it follows that we can obtain the generating functions
for $N_k^{\Upsilon_{{\bf a}}}(x,p,q,r,t)$ for ${\bf a}\in
\{{\bf r},{\bf s},{\bf w}\}$ immediately from
Theorem \ref{overlap}.

Similarly, it follows from Theorem 8 that we have the following corollary.

\begin{corollary}\label{thm:0dris}
Let $\Upsilon_{{\bf d}} = \{(1~2,0~1),(1~2,1~0)\}$. Then for all $k \geq 2$,
\begin{equation}\label{eq:0dris}
A_k^{\Upsilon_{{\bf d}}}(p,q,1,t) = \frac{k-1}{k-1+ k\sum_{n \geq 1}
\frac{p^{\binom{n}{2}} (-(k-1)t)^n}{[n]_{p,q}!}}.
\end{equation}
\end{corollary}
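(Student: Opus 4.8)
The plan is to obtain Corollary~\ref{thm:0dris} as the immediate specialization $x=0$ of Theorem~\ref{thm:pqUris} (Theorem~8), exactly as the preceding corollaries were derived from Theorems~\ref{thm:pqris}--\ref{thm:pqsris}. First I would observe that, for any fixed $n$, the inner sum $\sum_{(\sg,w)\in C_k\wr S_n} q^{\inv{\sg}}p^{\coinv{\sg}}r^{||w||} x^{\Umch{(\sg,w)}}$ appearing in~\eqref{defDU} is a polynomial in $x$, so the substitution $x=0$ is well defined coefficientwise: it replaces $x^{\Umch{(\sg,w)}}$ by $1$ when $\Umch{(\sg,w)}=0$ and by $0$ otherwise, i.e.\ by $\chi(\Umch{(\sg,w)}=0)$. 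Comparing with the definition of $A_k^{\Upsilon}(p,q,r,t)$ in~\eqref{AU}, this yields the identity $D_k^{\Upsilon}(0,p,q,r,t)=A_k^{\Upsilon}(p,q,r,t)$ valid for every $\Upsilon\subseteq C_k\wr S_j$ with $\red{u}=u$; in particular it holds for $\Upsilon_{{\bf d}}=\{(1~2,0~1),(1~2,1~0)\}$, where (as in the statement of Theorem~\ref{thm:pqUris}) the variable $r$ has been set to $1$, so what we obtain is $A_k^{\Upsilon_{{\bf d}}}(p,q,1,t)$.

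Next I would simply evaluate the closed form of Theorem~\ref{thm:pqUris} at $x=0$. The numerator $(k-1)(1-x)$ becomes $k-1$; the first term $(k-1)(1-x)$ of the denominator becomes $k-1$; and in each summand of the series the factor $\bigl((k-1)(x-1)t\bigr)^{n}$ becomes $(-(k-1)t)^{n}$. Hence
$$D_k^{\Upsilon_{{\bf d}}}(0,p,q,r,t)=\frac{k-1}{\,k-1+k\sum_{n\geq 1}\dfrac{p^{\binom{n}{2}}(-(k-1)t)^{n}}{[n]_{p,q}!}\,},$$
which is precisely the right-hand side of~\eqref{eq:0dris}. Combining this with the identity $D_k^{\Upsilon_{{\bf d}}}(0,p,q,r,t)=A_k^{\Upsilon_{{\bf d}}}(p,q,1,t)$ from the previous step gives the corollary.

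There is essentially no obstacle: the argument is a one-line specialization, and the only point worth a remark is the formal justification of the substitution $x=0$. On the combinatorial side this is immediate because each coefficient of $t^n$ is a genuine polynomial in $x$; on the generating-function side, Theorem~\ref{thm:pqUris} is an identity of formal power series in $t$ whose coefficients are rational functions of $p,q,r,x$ that are regular at $x=0$, so evaluation at $x=0$ is legitimate. Since the corollary is stated with $r$ already specialized to $1$, matching the form in which Theorem~\ref{thm:pqUris} is written, no subtlety about the $r$-dependence arises, and the same reasoning applied verbatim to Theorems~\ref{thm:pqris}--\ref{thm:pqsris} produces Corollaries~\ref{thm:0ris}--\ref{thm:0sris}.
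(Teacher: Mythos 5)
Your proposal is correct and follows exactly the paper's route: the paper obtains this corollary (like Corollaries \ref{thm:0ris}--\ref{thm:0sris}) simply by setting $x=0$ in the closed form of Theorem \ref{thm:pqUris}, which is precisely your computation, and your remark that the substitution $x=0$ turns $x^{\Umch{(\sg,w)}}$ into $\chi(\Umch{(\sg,w)}=0)$ is the (implicit) justification the paper relies on. The only point worth noting is that Theorem \ref{thm:pqUris} already omits the factor $r^{||w||}$ from its inner sum, so the specialization to $r=1$ is built into that theorem's statement, as you correctly observe.
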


Thus we can obtain $N_k^{\Upsilon_{{\bf d}}}(x,p,q,1,t)$ from
Theorem \ref{overlap}.

\section{More generating functions}

For ${\bf a} \in \{{\bf r},{\bf w},{\bf s},{\bf d}\}$,
 let $onenlap^{\Upsilon_{{\bf a}}}(C_k \wr S_n)$ denote
the set of permutations $(\sg,w) \in C_k \wr S_n$ such that
$\Upsilon_{\bf a}\mbox{-nlap}(\sg,w) =1$, $onemch^{\Upsilon_{{\bf
a}}}(C_k \wr S_n)$ denote the set of permutations $(\sg,w) \in C_k\
\wr S_n$ such that $\Upsilon_{\bf a}\mbox{-mch}(\sg,w) =1$, and
$twomch^{\Upsilon_{{\bf a}}}(C_k \wr S_n)$ denote the set of
permutations $(\sg,w) \in C_k\wr S_n$ such that $\Upsilon_{\bf
a}\mbox{-mch}(\sg,w) =2$. It is easy to see that
$$onemch^{\Upsilon_{{\bf a}}}(C_k \wr S_n) \subseteq
 onenlap^{\Upsilon_{{\bf a}}}(C_k \wr S_n).$$
Now define
\begin{equation}\label{defU}
\mathcal{U}^{\Upsilon_{{\bf a}}}_{n,k} = onenlap^{\Upsilon_{{\bf a}}}(C_k \wr S_n) -
onemch^{\Upsilon_{{\bf a}}}(C_k \wr S_n).
\end{equation}
Thus $\mathcal{U}^{\Upsilon_{{\bf a}}}_{n,k}$ consists of those
permutations $(\sg,w)$ such that there is an $s$ with $1 \leq s <
n-1$ such that $(\sg,w)$ has a $\Upsilon_{{\bf a}}$-match starting
at positions $s$ and $s+1$ and these are the only $\Upsilon_{{\bf
a}}$-matches in $(\sg,w)$. For example, $\mathcal{U}^{\Upsilon_{{\bf
r}}}_{n,k}$ consists of those permutations $(\sg,w)\in C_k \wr S_n$
such that $Ris(\sg,w) = \{s,s+1\}$ for some $s$. Similarly
$\mathcal{U}^{\Upsilon_{{\bf w}}}_{n,k}$ consists of those
permutations $(\sg,w) \in C_k \wr S_n$ such that $WRis(\sg,w) =
\{s,s+1\}$ for some $s$ and $\mathcal{U}^{\Upsilon_{{\bf s}}}_{n,k}$
consists of those permutations $(\sg,w) \in C_k \wr S_n$ such that
$SRis(\sg,w) = \{s,s+1\}$ for some $s$. It is also the case that
$$\mathcal{U}^{\Upsilon_{{\bf a}}}_{n,k} \subseteq twomch^{\Upsilon_{{\bf a}}}(C_k \wr S_n).$$
Now define
\begin{equation}\label{defV}
\mathcal{V}^{\Upsilon_{{\bf a}}}_{n,k} = twomch^{\Upsilon_{{\bf a}}}(C_k \wr S_n) -
\mathcal{U}^{\Upsilon_{{\bf a}}}_{n,k}.
\end{equation}
Then $\mathcal{V}^{\Upsilon_{{\bf a}}}_{n,k}$ consists of those
permutations $(\sg,w) \in C_k \wr S_n$ such that $(\sg,w)$ has
exactly two $\Upsilon_{{\bf a}}$-matches and those $\Upsilon_{{\bf
a}}$-matches do not overlap.  Thus, $\mathcal{V}^{\Upsilon_{{\bf
r}}}_{n,k}$ consists of those permutations $(\sg,w) \in C_k \wr S_n$
such that $WRis(\sg,w) = \{i,j \}$ where $i+2 \leq j$. Similarly,
$\mathcal{V}^{\Upsilon_{{\bf w}}}_{n,k}$ consists of those
permutations $(\sg,w) \in C_k \wr S_n$ such that $Ris(\sg,w) = \{i,j
\}$ where $i+2 \leq j$ and $\mathcal{V}^{\Upsilon_{{\bf s}}}_{n,k}$
consists of those permutations $(\sg,w) \in C_k \wr S_n$ such that
$SRis(\sg,w) = \{i,j \}$ where $i+2 \leq j$.

We define
\begin{eqnarray}
R_k^{\Upsilon_{{\bf a}}}(p,q,r,t) &=& \sum_{n \geq 0}
\frac{t^n}{[n]_{p,q}!} \sum_{(\sg,w) \in \mathcal{U}^{\Upsilon_{{\bf
a}}}_{n,k}} q^{\inv{\sg}}p^{\coinv{\sg}}r^{||w||} \nonumber
\end{eqnarray}
and
\begin{eqnarray}
S_k^{\Upsilon_{{\bf a}}}(p,q,r,t) &=& \sum_{n \geq 0}
\frac{t^n}{[n]_{p,q}!} \sum_{(\sg,w) \in \mathcal{V}^{\Upsilon_{{\bf
a}}}_{n,k}} q^{\inv{\sg}}p^{\coinv{\sg}}r^{||w||}. \nonumber
\end{eqnarray}

Then from our definitions
\begin{eqnarray}
R_k^{\Upsilon_{{\bf a}}}(p,q,r,t) &=& [N_k^{\Upsilon_{{\bf
a}}}(x,p,q,r,t) - D_k^{\Upsilon_{{\bf a}}}(x,p,q,r,t)]|_x \nonumber
\end{eqnarray}
and
\begin{eqnarray}
S_k^{\Upsilon_{{\bf a}}}(p,q,r,t) &=& D_k^{\Upsilon_{{\bf
a}}}(x,p,q,r,t)|_{x^2} - R_k^{\Upsilon_{{\bf a}}}(p,q,r,t).
\nonumber
\end{eqnarray}

We shall show that we can easily find the generating functions
$R_k^{\Upsilon_{{\bf a}}}(p,q,r,t)$ and $S_k^{\Upsilon_{{\bf
a}}}(p,q,r,t)$ for ${\bf a} \in \{{\bf r},{\bf w},{\bf s}\}$ and the
generating function $R_k^{\Upsilon_{{\bf d}}}(p,q,1,t)$ and
$S_k^{\Upsilon_{{\bf d}}}(p,q,1,t)$. That is, consider the case when
${\bf a} ={\bf r}$. Then
\begin{equation}\label{2eq:nlap}
N_k^{\Upsilon_{{\bf r}}}(x,p,q,r,t) =
\frac{A_k^{\Upsilon_{{\bf r}}}(p,q,r,t)}{1 -x(1+([k]_rt -1)A_k^{\Upsilon_{{\bf r}}}(p,q,r,t))}
\end{equation}
so that
\begin{equation}\label{3eq:nlap}
N_k^{\Upsilon_{{\bf r}}}(x,p,q,r,t)|_x = A_k^{\Upsilon_{{\bf
r}}}(p,q,r,t)(1+([k]_rt -1)A_k^{\Upsilon_{{\bf r}}}(p,q,r,t)).
\end{equation}
In our case,
$$A_k^{\Upsilon_{{\bf r}}}(p,q,r,t) = \frac{1}{P_k^{\Upsilon_{{\bf r}}}(t)}$$
where
\begin{equation}\label{4eq:nlap}
P_k^{\Upsilon_{{\bf r}}}(t)= 1+\sum_{n \geq 1} \frac{(-t)^n}{[n]_{p,q}!}\rbinom{n+k-1}{n}.
\end{equation}
Thus
\begin{equation}\label{5eq:nlap}
N_k^{\Upsilon_{{\bf r}}}(x,p,q,r,t)|_x =
\frac{([k]_rt -1) + P_k^{\Upsilon_{{\bf r}}}(t)}{(P_k^{\Upsilon_{{\bf r}}}(t))^2}.
\end{equation}
On the other hand, it follows from our results in section 3 that
\begin{equation*}\label{6eq:nlap}
D_k^{\Upsilon_{{\bf r}}}(x,p,q,r,t) = \frac{1}{1-\sum_{n \geq 1} (x-1)^{n-1}
\frac{t^n}{[n]_{p,q}!}\rbinom{n+k-1}{n}}.
\end{equation*}
Thus
\begin{equation*}\label{7eq:nlap}
D_k^{\Upsilon_{{\bf r}}}(x,p,q,r,t)|_x = \sum_{m \geq 1}
\left( \sum_{n \geq 1} (x-1)^{n-1}
\frac{t^n}{[n]_{p,q}!}\rbinom{n+k-1}{n}\right)^m|_x .
\end{equation*}
However,
\begin{eqnarray*}\label{8eq:nlap}
&&\sum_{n \geq 1} (x-1)^{n-1}
\frac{t^n}{[n]_{p,q}!}\rbinom{n+k-1}{n} = \\
&&F_k^{\Upsilon_{{\bf r}}}(t) +xG_k^{\Upsilon_{{\bf r}}}(t) +
x^2H_k^{\Upsilon_{{\bf r}}}(t) + O(x^3),
\end{eqnarray*}
where
\begin{equation}\label{9eq:nlap}
F_k^{\Upsilon_{{\bf r}}}(t) = \sum_{n \geq 1} (-1)^{n-1}
\frac{t^n}{[n]_{p,q}!}\rbinom{n+k-1}{n},
\end{equation}
\begin{equation}\label{10eq:nlap}
G_k^{\Upsilon_{{\bf r}}}(t) = \sum_{n \geq 2} (-1)^{n-2} (n-1)
\frac{t^n}{[n]_{p,q}!}\rbinom{n+k-1}{n},
\end{equation}
and
\begin{equation}\label{11eq:nlap}
H_k^{\Upsilon_{{\bf r}}}(t) = \sum_{n \geq 3} (-1)^{n-3}
\binom{n-1}{2} \frac{t^n}{[n]_{p,q}!}\rbinom{n+k-1}{n}.
\end{equation}

Thus since
$$(F^{\Upsilon_{{\bf r}}}(t)+xG^{\Upsilon_{{\bf r}}}(t)+O(x^2))^m|_x = mG^{\Upsilon_{{\bf r}}}(t)(F^{\Upsilon_{{\bf r}}}(t))^{m-1},$$
we have
\begin{eqnarray*}\label{11aeq:nlap}
D_k^{\Upsilon_{{\bf r}}}(x,p,q,r,t)|_x &=& G^{\Upsilon_{{\bf r}}}(t) \sum_{m \geq 1}m (F^{\Upsilon_{{\bf r}}}(t))^{m-1} \nonumber \\
&=&
\frac{G^{\Upsilon_{{\bf r}}}(t)}{(1-F^{\Upsilon_{{\bf r}}}(t))^2}.
\end{eqnarray*}
However
$$1- F_k^{\Upsilon_{{\bf r}}}(t) = 1+ \sum_{n \geq 1} (-1)^{n}
\frac{t^n}{[n]_{p,q}!}\rbinom{n+k-1}{n} = P_k^{\Upsilon_{{\bf r}}}(t).$$
Thus
\begin{equation}\label{12eq:nlap}
D_k^{\Upsilon_{{\bf r}}}(x,p,q,r,t)|_x = \frac{G_k^{\Upsilon_{{\bf r}}}(t)}{(P_k^{\Upsilon_{{\bf r}}}(t))^2}.
\end{equation}
It follows that
\begin{equation}\label{13eq:nlap}
R_k^{\Upsilon_{{\bf r}}}(p,q,r,t) = \frac{([k]_rt -1) + P_k^{\Upsilon_{{\bf r}}}(t) -
G_k^{\Upsilon_{{\bf r}}}(t)}{(P_k^{\Upsilon_{{\bf r}}}(t))^2}.
\end{equation}
Similarly,
\begin{eqnarray}\label{14eq:nlap}
D_k^{\Upsilon_{{\bf r}}}(x,p,q,r,t)|_{x^2} &=& \sum_{m \geq 1} m H_k^{\Upsilon_{{\bf r}}}(t)(F_k^{\Upsilon_{{\bf r}}}(t))^{m-1} +
\binom{m}{2} (G_k^{\Upsilon_{{\bf r}}}(t))^2 (F_k^{\Upsilon_{{\bf r}}}(t))^{m-2}\nonumber \\
&=& H_k^{\Upsilon_{{\bf r}}}(t) \sum_{m \geq 1} m (F_k^{\Upsilon_{{\bf r}}}(t))^{m-1} +
(G_k^{\Upsilon_{{\bf r}}}(t))^2 \sum_{m\geq 2}\binom{m}{2}  (F_k^{\Upsilon_{{\bf r}}}(t))^{m-2} \nonumber \\
&=& \frac{H_k^{\Upsilon_{{\bf r}}}(t)}{(1-F_k^{\Upsilon_{{\bf r}}}(t))^2}+ \frac{(G_k^{\Upsilon_{{\bf r}}}(t))^2}{(1-F_k^{\Upsilon_{{\bf r}}}(t))^3} \nonumber \\
&=& \frac{H_k^{\Upsilon_{{\bf r}}}(t)}{((P_k^{\Upsilon_{{\bf r}}}(t))^2}+ \frac{(G_k^{\Upsilon_{{\bf r}}}(t))^2}{(P_k^{\Upsilon_{{\bf r}}}(t))^3} \nonumber \\
&=& \frac{H_k^{\Upsilon_{{\bf r}}}(t)P_k^{\Upsilon_{{\bf r}}}(t) +(G_k^{\Upsilon_{{\bf r}}}(t))^2}{(P_k^{\Upsilon_{{\bf r}}}(t))^3}.
\end{eqnarray}
Thus
\begin{eqnarray}\label{15eq:nlap}
&&S_k^{\Upsilon_{{\bf r}}}(p,q,r,t) = \nonumber \\
&&D_k^{\Upsilon_{{\bf r}}}(x,p,q,r,t)|_{x^2} - R_k^{\Upsilon_{{\bf r}}}(p,q,r,t) = \nonumber \\
&& \frac{H_k^{\Upsilon_{{\bf r}}}(t)P_k^{\Upsilon_{{\bf r}}}(t) +G_k^{\Upsilon_{{\bf r}}}(t))^2 -([k]_rt -1)P_k^{\Upsilon_{{\bf r}}}(t) - (P_k^{\Upsilon_{{\bf r}}}(t))^2
+ G_k^{\Upsilon_{{\bf r}}}(t)P_k^{\Upsilon_{{\bf r}}}(t)}{(P_k^{\Upsilon_{{\bf r}}}(t))^3} = \nonumber \\
&&
\frac{(G_k^{\Upsilon_{{\bf r}}}(t))^2 + P_k^{\Upsilon_{{\bf r}}}(t)(H_k^{\Upsilon_{{\bf r}}}(t) +G_k^{\Upsilon_{{\bf r}}}(t)-([k]_rt -1) - P_k^{\Upsilon_{{\bf r}}}(t))}{(P_k^{\Upsilon_{{\bf r}}}(t))^3}.
\end{eqnarray}
The exact same sequence of steps work in the other cases so that
we have the following theorems.

\begin{theorem} For all $k \geq 2$,

\begin{eqnarray}
R_k^{\Upsilon_{{\bf r}}}(p,q,r,t) &=& \sum_{n \geq 0} \frac{t^n}{[n]_{p,q}!}
\sum_{(\sg,w) \in \mathcal{U}^{\Upsilon_{{\bf r}}}_{n,k}} q^{\inv{\sg}}p^{\coinv{\sg}}r^{||w||}
\nonumber \\
&=& \frac{([k]_rt -1) + P_k^{\Upsilon_{{\bf r}}}(t) -
G_k^{\Upsilon_{{\bf r}}}(t)}{(P_k^{\Upsilon_{{\bf r}}}(t))^2}
\end{eqnarray}
and
\begin{eqnarray}
S_k^{\Upsilon_{{\bf r}}}(p,q,r,t) &=& \sum_{n \geq 0} \frac{t^n}{[n]_{p,q}!}
\sum_{(\sg,w) \in \mathcal{V}^{\Upsilon_{{\bf r}}}_{n,k}} q^{\inv{\sg}}p^{\coinv{\sg}}r^{||w||}
\nonumber \\
&=& \frac{(G_k^{\Upsilon_{{\bf r}}}(t))^2 + P_k^{\Upsilon_{{\bf r}}}(t)(H_k^{\Upsilon_{{\bf r}}}(t) +G_k^{\Upsilon_{{\bf r}}}(t)-([k]_rt -1) - P_k^{\Upsilon_{{\bf r}}}(t))}{(P_k^{\Upsilon_{{\bf r}}}(t))^3}
\end{eqnarray}
where
\begin{eqnarray*}
P_k^{\Upsilon_{{\bf r}}}(t) = 1+ \sum_{n \geq 1} \frac{(-t)^n}{[n]_{p,q}!}\rbinom{n+k-1}{n}, \\
G_k^{\Upsilon_{{\bf r}}}(t) = \sum_{n \geq 2} (n-1)
\frac{(-t)^n}{[n]_{p,q}!}\rbinom{n+k-1}{n},\ \mbox{and} \\
H_k^{\Upsilon_{{\bf r}}}(t) = - \sum_{n \geq 3}  \binom{n-1}{2}
\frac{(-t)^n}{[n]_{p,q}!}\rbinom{n+k-1}{n}.
\end{eqnarray*}

\end{theorem}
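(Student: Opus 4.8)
The plan is to deduce both identities by reading off the coefficients of $x$ and $x^2$ from the two generating functions $N_k^{\Upsilon_{{\bf r}}}(x,p,q,r,t)$ and $D_k^{\Upsilon_{{\bf r}}}(x,p,q,r,t)$ produced above, and then assembling them via the relations $R_k^{\Upsilon_{{\bf r}}} = [N_k^{\Upsilon_{{\bf r}}} - D_k^{\Upsilon_{{\bf r}}}]|_x$ and $S_k^{\Upsilon_{{\bf r}}} = D_k^{\Upsilon_{{\bf r}}}|_{x^2} - R_k^{\Upsilon_{{\bf r}}}$ recorded just before the statement. Indeed, every manipulation needed is exactly the one displayed in (\ref{2eq:nlap})--(\ref{15eq:nlap}), so the remaining work is to organize these into a proof and record the outcome.

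For the formula for $R_k^{\Upsilon_{{\bf r}}}$ I would argue in two stages. First, from Theorem~\ref{overlap} together with $A_k^{\Upsilon_{{\bf r}}}(p,q,r,t) = 1/P_k^{\Upsilon_{{\bf r}}}(t)$ (Corollary~\ref{thm:0ris}), the coefficient of $x$ in $\dfrac{A}{1-x\big(1+([k]_rt-1)A\big)}$ is $A\big(1+([k]_rt-1)A\big)$, giving
\[
N_k^{\Upsilon_{{\bf r}}}(x,p,q,r,t)|_x = \frac{([k]_rt-1)+P_k^{\Upsilon_{{\bf r}}}(t)}{(P_k^{\Upsilon_{{\bf r}}}(t))^2}.
\]
Second, I would rewrite $D_k^{\Upsilon_{{\bf r}}}$ from Theorem~\ref{thm:pqris} as $\Big(1-\sum_{n \geq 1}(x-1)^{n-1}\tfrac{t^n}{[n]_{p,q}!}\rbinom{n+k-1}{n}\Big)^{-1}$, expand each $(x-1)^{n-1}$ by the binomial theorem so that the inner sum equals $F_k^{\Upsilon_{{\bf r}}}(t)+xG_k^{\Upsilon_{{\bf r}}}(t)+x^2H_k^{\Upsilon_{{\bf r}}}(t)+O(x^3)$ with $F,G,H$ as in (\ref{9eq:nlap})--(\ref{11eq:nlap}), and take the $x$-coefficient of $\sum_{m \geq 1}(F+xG+O(x^2))^m$, which is $G\sum_{m \geq 1}mF^{m-1}=G/(1-F)^2$. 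Since $1-F_k^{\Upsilon_{{\bf r}}}(t)=P_k^{\Upsilon_{{\bf r}}}(t)$, this yields $D_k^{\Upsilon_{{\bf r}}}|_x=G_k^{\Upsilon_{{\bf r}}}(t)/(P_k^{\Upsilon_{{\bf r}}}(t))^2$, and subtracting produces the claimed $R_k^{\Upsilon_{{\bf r}}}$.

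For the formula for $S_k^{\Upsilon_{{\bf r}}}$ I would extract the $x^2$-coefficient from the same expansion, using $(F+xG+x^2H+O(x^3))^m|_{x^2}=mHF^{m-1}+\binom{m}{2}G^2F^{m-2}$ and summing over $m$:
\[
D_k^{\Upsilon_{{\bf r}}}|_{x^2}=\frac{H}{(1-F)^2}+\frac{G^2}{(1-F)^3}=\frac{H_k^{\Upsilon_{{\bf r}}}(t)P_k^{\Upsilon_{{\bf r}}}(t)+(G_k^{\Upsilon_{{\bf r}}}(t))^2}{(P_k^{\Upsilon_{{\bf r}}}(t))^3}.
\]
Then $S_k^{\Upsilon_{{\bf r}}}=D_k^{\Upsilon_{{\bf r}}}|_{x^2}-R_k^{\Upsilon_{{\bf r}}}$ collapses over the common denominator $(P_k^{\Upsilon_{{\bf r}}}(t))^3$ to the displayed expression. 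Finally I would rewrite $F,G,H$ in the $(-t)^n$ form stated in the theorem, which is immediate from $(-1)^{n-2}t^n=(-t)^n$ and $(-1)^{n-3}t^n=-(-t)^n$.

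There is no genuine obstacle here beyond careful bookkeeping: one must correctly identify $\binom{n-1}{j}(-1)^{n-1-j}$ as the coefficient of $x^j$ in $(x-1)^{n-1}$, confirm that the $O(x^3)$ tails contribute nothing to the $x$- and $x^2$-coefficients once the series is inverted, and check that the three rational functions genuinely combine over $(P_k^{\Upsilon_{{\bf r}}}(t))^3$ with the cancellations as written. The identical computation, with $\rbinom{n+k-1}{n}$ replaced by $[k]_{r^n}$, by $r^{\binom{n}{2}}\rbinom{k}{n}$, or (taking $r=1$) by $k(k-1)^{n-1}$, yields the analogous theorems for $\Upsilon_{{\bf w}}$, $\Upsilon_{{\bf s}}$, and $\Upsilon_{{\bf d}}$.
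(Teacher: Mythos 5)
Your proposal is correct and follows essentially the same route as the paper: the paper's own derivation (the chain of computations culminating in its equations for $N_k^{\Upsilon_{{\bf r}}}|_x$, $D_k^{\Upsilon_{{\bf r}}}|_x$, and $D_k^{\Upsilon_{{\bf r}}}|_{x^2}$ just before the theorem) extracts the $x$- and $x^2$-coefficients from Theorem \ref{overlap} and Theorem \ref{thm:pqris} exactly as you do, using the expansion $F+xG+x^2H+O(x^3)$ and the identity $1-F=P$. The only remark worth making is that you, like the paper's Section 5 displays, silently drop the $p^{\binom{n}{2}}$ factors when rewriting $D_k^{\Upsilon_{{\bf r}}}$, but this is consistent with the statement as printed.
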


\begin{theorem} For all $k \geq 2$,

\begin{eqnarray}
R_k^{\Upsilon_{{\bf w}}}(p,q,r,t) &=& \sum_{n \geq 0} \frac{t^n}{[n]_{p,q}!}
\sum_{(\sg,w) \in \mathcal{U}^{\Upsilon_{{\bf w}}}_{n,k}} q^{\inv{\sg}}p^{\coinv{\sg}}r^{||w||}
\nonumber \\
&=& \frac{([k]_rt -1) + P_k^{\Upsilon_{{\bf w}}}(t) -
G_k^{\Upsilon_{{\bf w}}}(t)}{(P_k^{\Upsilon_{{\bf w}}}(t))^2}
\end{eqnarray}
and
\begin{eqnarray}
S_k^{\Upsilon_{{\bf w}}}(p,q,r,t) &=& \sum_{n \geq 0} \frac{t^n}{[n]_{p,q}!}
\sum_{(\sg,w) \in \mathcal{V}^{\Upsilon_{{\bf w}}}_{n,k}} q^{\inv{\sg}}p^{\coinv{\sg}}r^{||w||}
\nonumber \\
&=& \frac{(G_k^{\Upsilon_{{\bf w}}}(t))^2 + P_k^{\Upsilon_{{\bf w}}}(t)(H_k^{\Upsilon_{{\bf w}}}(t) +G_k^{\Upsilon_{{\bf w}}}(t)-([k]_rt -1) - P_k^{\Upsilon_{{\bf w}}}(t))}{(P_k^{\Upsilon_{{\bf w}}}(t))^3}
\end{eqnarray}
where
\begin{eqnarray*}
P_k^{\Upsilon_{{\bf w}}}(t) = 1+ \sum_{n \geq 1} \frac{(-t)^n}{[n]_{p,q}!} [k]_{r^n}, \\
G_k^{\Upsilon_{{\bf w}}}(t) = \sum_{n \geq 2} (n-1)
\frac{(-t)^n}{[n]_{p,q}!}[k]_{r^n},\ \mbox{and} \\
H_k^{\Upsilon_{{\bf w}}}(t) = - \sum_{n \geq 3}  \binom{n-1}{2}
\frac{(-t)^n}{[n]_{p,q}!}[k]_{r^n}.
\end{eqnarray*}

\end{theorem}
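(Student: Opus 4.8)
The plan is to repeat, essentially verbatim, the computation performed above for the rise case ${\bf a}={\bf r}$ (the one culminating in \eqref{13eq:nlap} and \eqref{15eq:nlap}), making the single change that the factor $\rbinom{n+k-1}{n}$ is replaced throughout by $[k]_{r^n}$. This is legitimate because Corollary~\ref{thm:0wris} gives $A_k^{\Upsilon_{{\bf w}}}(p,q,r,t)=1/P_k^{\Upsilon_{{\bf w}}}(t)$ with $P_k^{\Upsilon_{{\bf w}}}(t)$ the polynomial of the statement, and Theorem~\ref{thm:pqwris} lets us bring $D_k^{\Upsilon_{{\bf w}}}(x,p,q,r,t)$ into the same shape as the rise-case $D_k^{\Upsilon_{{\bf r}}}$, namely $1/\bigl(1-\sum_{n\geq 1}(x-1)^{n-1}\frac{t^n}{[n]_{p,q}!}[k]_{r^n}\bigr)$; so the two inputs to the section-5 argument differ from the rise case only in that one substitution.

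Concretely, first I would extract $N_k^{\Upsilon_{{\bf w}}}(x,p,q,r,t)|_x$: substituting $A_k^{\Upsilon_{{\bf w}}}=1/P_k^{\Upsilon_{{\bf w}}}(t)$ into the formula of Theorem~\ref{overlap} and expanding the result as a geometric series in $x$, the coefficient of $x^1$ equals $A_k^{\Upsilon_{{\bf w}}}\bigl(1+([k]_rt-1)A_k^{\Upsilon_{{\bf w}}}\bigr)=\frac{([k]_rt-1)+P_k^{\Upsilon_{{\bf w}}}(t)}{(P_k^{\Upsilon_{{\bf w}}}(t))^2}$. Next I would write $\sum_{n\geq 1}(x-1)^{n-1}\frac{t^n}{[n]_{p,q}!}[k]_{r^n}=F_k^{\Upsilon_{{\bf w}}}(t)+xG_k^{\Upsilon_{{\bf w}}}(t)+x^2H_k^{\Upsilon_{{\bf w}}}(t)+O(x^3)$, where $F,G,H$ collect the $x^0$, $x^1$, $x^2$ coefficients in the binomial expansion of $(x-1)^{n-1}$ (so $G_k^{\Upsilon_{{\bf w}}}$ and $H_k^{\Upsilon_{{\bf w}}}$ are precisely the series named in the statement, and $1-F_k^{\Upsilon_{{\bf w}}}(t)=P_k^{\Upsilon_{{\bf w}}}(t)$). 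Using $\sum_{m\geq1}mz^{m-1}=1/(1-z)^2$ and $\sum_{m\geq2}\binom{m}{2}z^{m-2}=1/(1-z)^3$, the manipulations that produced \eqref{12eq:nlap} and \eqref{14eq:nlap} then give $D_k^{\Upsilon_{{\bf w}}}|_x=G_k^{\Upsilon_{{\bf w}}}(t)/(P_k^{\Upsilon_{{\bf w}}}(t))^2$ and $D_k^{\Upsilon_{{\bf w}}}|_{x^2}=\bigl(H_k^{\Upsilon_{{\bf w}}}(t)P_k^{\Upsilon_{{\bf w}}}(t)+(G_k^{\Upsilon_{{\bf w}}}(t))^2\bigr)/(P_k^{\Upsilon_{{\bf w}}}(t))^3$. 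Finally, $R_k^{\Upsilon_{{\bf w}}}=N_k^{\Upsilon_{{\bf w}}}|_x-D_k^{\Upsilon_{{\bf w}}}|_x$ and $S_k^{\Upsilon_{{\bf w}}}=D_k^{\Upsilon_{{\bf w}}}|_{x^2}-R_k^{\Upsilon_{{\bf w}}}$, and clearing the common denominators $(P_k^{\Upsilon_{{\bf w}}}(t))^2$ and $(P_k^{\Upsilon_{{\bf w}}}(t))^3$ yields the two claimed closed forms, exactly as \eqref{13eq:nlap} and \eqref{15eq:nlap} do in the rise case.

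I do not expect any real obstacle here: apart from the routine series bookkeeping, the only substantive point is that $R_k^{\Upsilon_{{\bf w}}}$ and $S_k^{\Upsilon_{{\bf w}}}$ are indeed the coefficient extractions $[N_k^{\Upsilon_{{\bf w}}}-D_k^{\Upsilon_{{\bf w}}}]|_x$ and $D_k^{\Upsilon_{{\bf w}}}|_{x^2}-R_k^{\Upsilon_{{\bf w}}}$. This follows from the inclusions $onemch^{\Upsilon_{{\bf w}}}(C_k\wr S_n)\subseteq onenlap^{\Upsilon_{{\bf w}}}(C_k\wr S_n)\subseteq twomch^{\Upsilon_{{\bf w}}}(C_k\wr S_n)$ and the definitions $\mathcal{U}^{\Upsilon_{{\bf w}}}_{n,k}=onenlap^{\Upsilon_{{\bf w}}}(C_k\wr S_n)-onemch^{\Upsilon_{{\bf w}}}(C_k\wr S_n)$ and $\mathcal{V}^{\Upsilon_{{\bf w}}}_{n,k}=twomch^{\Upsilon_{{\bf w}}}(C_k\wr S_n)-\mathcal{U}^{\Upsilon_{{\bf w}}}_{n,k}$ established before the theorem, which transfer to weak rises without change since $i$ is the starting position of a $\Upsilon_{{\bf w}}$-bi-match in $(\sg,w)$ precisely when $i\in WRis((\sg,w))$; hence the entire rise computation carries over under the substitution $\rbinom{n+k-1}{n}\mapsto[k]_{r^n}$.
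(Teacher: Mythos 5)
Your proposal is correct and follows the paper's own route exactly: the paper derives the closed forms for $R_k^{\Upsilon_{{\bf r}}}$ and $S_k^{\Upsilon_{{\bf r}}}$ in detail and then asserts that ``the exact same sequence of steps work in the other cases,'' which is precisely the substitution $\rbinom{n+k-1}{n}\mapsto[k]_{r^n}$ you carry out, using Corollary \ref{thm:0wris} and Theorem \ref{thm:pqwris} as the two inputs. All the coefficient extractions and the final algebra check out against the stated formulas.
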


\begin{theorem} For all $k \geq 2$,

\begin{eqnarray}
R_k^{\Upsilon_{{\bf s}}}(p,q,r,t) &=& \sum_{n \geq 0} \frac{t^n}{[n]_{p,q}!}
\sum_{(\sg,w) \in \mathcal{U}^{\Upsilon_{{\bf s}}}_{n,k}} q^{\inv{\sg}}p^{\coinv{\sg}}r^{||w||}
\nonumber \\
&=& \frac{([k]_rt -1) + P_k^{\Upsilon_{{\bf s}}}(t) -
G_k^{\Upsilon_{{\bf s}}}(t)}{(P_k^{\Upsilon_{{\bf s}}}(t))^2}
\end{eqnarray}
and
\begin{eqnarray}
S_k^{\Upsilon_{{\bf s}}}(p,q,r,t) &=& \sum_{n \geq 0} \frac{t^n}{[n]_{p,q}!}
\sum_{(\sg,w) \in \mathcal{V}^{\Upsilon_{{\bf s}}}_{n,k}} q^{\inv{\sg}}p^{\coinv{\sg}}r^{||w||}
\nonumber \\
&=& \frac{(G_k^{\Upsilon_{{\bf s}}}(t))^2 + P_k^{\Upsilon_{{\bf s}}}(t)(H_k^{\Upsilon_{{\bf s}}}(t) +G_k^{\Upsilon_{{\bf s}}}(t)-([k]_rt -1) - P_k^{\Upsilon_{{\bf s}}}(t))}{(P_k^{\Upsilon_{{\bf s}}}(t))^3}
\end{eqnarray}
where
\begin{eqnarray*}
P_k^{\Upsilon_{{\bf s}}}(t) = 1+ \sum_{n \geq 1} \frac{(-t)^n}{[n]_{p,q}!} \rbinom{k}{n}, \\
G_k^{\Upsilon_{{\bf s}}}(t) = \sum_{n \geq 2} (n-1)
\frac{(-t)^n}{[n]_{p,q}!}\rbinom{k}{n},\ \mbox{and} \\
H_k^{\Upsilon_{{\bf s}}}(t) = - \sum_{n \geq 3}  \binom{n-1}{2}
\frac{(-t)^n}{[n]_{p,q}!}\rbinom{k}{n}.
\end{eqnarray*}

\end{theorem}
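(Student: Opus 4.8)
The plan is to repeat verbatim the rational-function computation carried out for $\Upsilon_{{\bf r}}$ in equations~(\ref{2eq:nlap})--(\ref{15eq:nlap}), but feeding in the strict-rise data supplied by Theorem~\ref{thm:pqsris} and Corollary~\ref{thm:0sris} in place of the rise data. Since the identities $R_k^{\Upsilon_{{\bf s}}}(p,q,r,t) = [N_k^{\Upsilon_{{\bf s}}}(x,p,q,r,t) - D_k^{\Upsilon_{{\bf s}}}(x,p,q,r,t)]|_x$ and $S_k^{\Upsilon_{{\bf s}}}(p,q,r,t) = D_k^{\Upsilon_{{\bf s}}}(x,p,q,r,t)|_{x^2} - R_k^{\Upsilon_{{\bf s}}}(p,q,r,t)$ have already been set up, it suffices to compute the coefficients of $x$ and $x^2$ in $N_k^{\Upsilon_{{\bf s}}}$ and $D_k^{\Upsilon_{{\bf s}}}$ and then simplify each over a common denominator.

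For the coefficient of $x$, I would start from Corollary~\ref{thm:0sris}, which gives $A_k^{\Upsilon_{{\bf s}}}(p,q,r,t) = 1/P_k^{\Upsilon_{{\bf s}}}(t)$ with $P_k^{\Upsilon_{{\bf s}}}(t)$ the polynomial displayed in the statement. Plugging this into Theorem~\ref{overlap} yields $N_k^{\Upsilon_{{\bf s}}}(x,p,q,r,t)|_x = A_k^{\Upsilon_{{\bf s}}}(1 + ([k]_r t - 1)A_k^{\Upsilon_{{\bf s}}}) = \bigl(([k]_r t - 1) + P_k^{\Upsilon_{{\bf s}}}(t)\bigr)/(P_k^{\Upsilon_{{\bf s}}}(t))^2$. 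On the $D$ side I would rewrite the formula of Theorem~\ref{thm:pqsris} in the form $D_k^{\Upsilon_{{\bf s}}} = 1/(1 - \Phi)$, where $\Phi = \Phi(x,t) = \sum_{n \geq 1}(x-1)^{n-1}\frac{p^{\binom n2}t^n}{[n]_{p,q}!}\,r^{\binom n2}\rbinom kn$, exactly as in~(\ref{6eq:nlap}) for the rise case. Expanding $(x-1)^{n-1} = \sum_j \binom{n-1}{j}x^j(-1)^{n-1-j}$ gives $\Phi = F_k^{\Upsilon_{{\bf s}}}(t) + x\,G_k^{\Upsilon_{{\bf s}}}(t) + x^2 H_k^{\Upsilon_{{\bf s}}}(t) + O(x^3)$ with $F,G,H$ as in the statement; observing $1 - F_k^{\Upsilon_{{\bf s}}}(t) = P_k^{\Upsilon_{{\bf s}}}(t)$ and using $\sum_{m \geq 1} m\,F^{m-1} = (1-F)^{-2}$ gives $D_k^{\Upsilon_{{\bf s}}}|_x = G_k^{\Upsilon_{{\bf s}}}(t)/(P_k^{\Upsilon_{{\bf s}}}(t))^2$. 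Subtracting the two coefficients of $x$ produces the claimed formula for $R_k^{\Upsilon_{{\bf s}}}$.

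For the coefficient of $x^2$ I would extract it from $D_k^{\Upsilon_{{\bf s}}} = 1/(1-\Phi) = \sum_{m \geq 0}\Phi^m$: the $x^2$-term is $\sum_{m}\bigl(m\,H_k^{\Upsilon_{{\bf s}}}F^{m-1} + \binom m2 (G_k^{\Upsilon_{{\bf s}}})^2 F^{m-2}\bigr)$, which via $\sum_m m F^{m-1} = (1-F)^{-2}$ and $\sum_m \binom m2 F^{m-2} = (1-F)^{-3}$ collapses to $\bigl(H_k^{\Upsilon_{{\bf s}}}P_k^{\Upsilon_{{\bf s}}} + (G_k^{\Upsilon_{{\bf s}}})^2\bigr)/(P_k^{\Upsilon_{{\bf s}}})^3$. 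Then $S_k^{\Upsilon_{{\bf s}}} = D_k^{\Upsilon_{{\bf s}}}|_{x^2} - R_k^{\Upsilon_{{\bf s}}}$, put over the common denominator $(P_k^{\Upsilon_{{\bf s}}})^3$ and collecting terms exactly as in~(\ref{15eq:nlap}), gives the stated expression.

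There is no real obstacle here: as the paper notes, ``the exact same sequence of steps'' applies, and the genuine combinatorial content (the brick-tabloid sign-reversing involution $I_s$) is already packaged inside Theorem~\ref{thm:pqsris}. The only point that needs a line of care is tracking the $p^{\binom n2}$ and $r^{\binom n2}$ factors coming from Theorem~\ref{thm:pqsris} through the $x = 0$ specialization, so that $A_k^{\Upsilon_{{\bf s}}} = 1/P_k^{\Upsilon_{{\bf s}}}(t)$ with the $P_k^{\Upsilon_{{\bf s}}}(t)$ appearing in the statement; everything else is formal manipulation of rational power series, and the ``hard part'' is merely the bookkeeping needed to see the numerators collapse to the advertised forms.
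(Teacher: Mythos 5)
Your proposal is correct and follows exactly the route the paper takes: the paper proves the $\Upsilon_{{\bf r}}$ case in detail via equations (\ref{2eq:nlap})--(\ref{15eq:nlap}) and then asserts that ``the exact same sequence of steps'' yields the $\Upsilon_{{\bf s}}$ (and other) cases, which is precisely the substitution of the strict-rise data from Theorem \ref{thm:pqsris} and Corollary \ref{thm:0sris} that you describe. Your closing remark about tracking the $p^{\binom{n}{2}}$ and $r^{\binom{n}{2}}$ factors through the $x=0$ specialization is a fair point of care, since the paper itself is not entirely consistent about displaying those factors in its stated $P_k^{\Upsilon_{{\bf s}}}(t)$.
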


\begin{theorem} For all $k \geq 2$,

\begin{eqnarray}
R_k^{\Upsilon_{{\bf d}}}(p,q,1,t) &=& \sum_{n \geq 0} \frac{t^n}{[n]_{p,q}!}
\sum_{(\sg,w) \in \mathcal{U}^{\Upsilon_{{\bf d}}}_{n,k}} q^{\inv{\sg}}p^{\coinv{\sg}}
\nonumber \\
&=& \frac{(kt -1) + P_k^{\Upsilon_{{\bf d}}}(t) -
G_k^{\Upsilon_{{\bf d}}}(t)}{(P_k^{\Upsilon_{{\bf d}}}(t))^2}
\end{eqnarray}
and
\begin{eqnarray}
S_k^{\Upsilon_{{\bf d}}}(p,q,1,t) &=& \sum_{n \geq 0} \frac{t^n}{[n]_{p,q}!}
\sum_{(\sg,w) \in \mathcal{V}^{\Upsilon_{{\bf d}}}_{n,k}} q^{\inv{\sg}}p^{\coinv{\sg}}
\nonumber \\
&=& \frac{(G_k^{\Upsilon_{{\bf d}}}(t))^2 + P_k^{\Upsilon_{{\bf d}}}(t)(H_k^{\Upsilon_{{\bf d}}}(t) +G_k^{\Upsilon_{{\bf d}}}(t)-(kt -1) - P_k^{\Upsilon_{{\bf d}}}(t))}{(P_k^{\Upsilon_{{\bf d}}}(t))^3}
\end{eqnarray}
where
\begin{eqnarray*}
P_k^{\Upsilon_{{\bf d}}}(t) = 1+ \sum_{n \geq 1} \frac{(-t)^n}{[n]_{p,q}!} k(k-1)^{n-1}, \\
G_k^{\Upsilon_{{\bf d}}}(t) = \sum_{n \geq 2} (n-1)
\frac{(-t)^n}{[n]_{p,q}!}k(k-1)^{n-1},\ \mbox{and} \\
H_k^{\Upsilon_{{\bf d}}}(t) = - \sum_{n \geq 3}  \binom{n-1}{2}
\frac{(-t)^n}{[n]_{p,q}!}k(k-1)^{n-1}.
\end{eqnarray*}

\end{theorem}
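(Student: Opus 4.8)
The plan is to rerun the computation carried out earlier in this section for the case ${\bf a} = {\bf r}$, after first bringing the two $\Upsilon_{{\bf d}}$ inputs --- the formula for $A_k^{\Upsilon_{{\bf d}}}(p,q,1,t)$ from Corollary~\ref{thm:0dris} and the formula for $D_k^{\Upsilon_{{\bf d}}}(x,p,q,1,t)$ from Theorem~\ref{thm:pqUris} --- into the normal form that the coefficient-extraction bookkeeping used there requires. Since those two results were established only at $r=1$, the theorem and its proof are correspondingly restricted to $r=1$; and putting $r=1$ turns $[k]_r = [k]_1$ into $k$, so the factor $([k]_rt-1)$ appearing in Theorem~\ref{overlap} becomes $(kt-1)$.

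First I would divide numerator and denominator of $A_k^{\Upsilon_{{\bf d}}}(p,q,1,t) = \frac{k-1}{k-1 + k\sum_{n \geq 1}\frac{p^{\binom{n}{2}}(-(k-1)t)^n}{[n]_{p,q}!}}$ by $k-1$ and use $\frac{k}{k-1}(-(k-1)t)^n = k(k-1)^{n-1}(-t)^n$ to bring it to the shape $A_k^{\Upsilon_{{\bf d}}}(p,q,1,t) = 1/P_k^{\Upsilon_{{\bf d}}}(t)$, with $P_k^{\Upsilon_{{\bf d}}}(t)$ the polynomial displayed in the statement. Feeding this into Theorem~\ref{overlap} and reading off the coefficient of $x$ gives, as in \eqref{5eq:nlap},
$$N_k^{\Upsilon_{{\bf d}}}(x,p,q,1,t)|_x = A_k^{\Upsilon_{{\bf d}}}(p,q,1,t)\bigl(1 + (kt-1)A_k^{\Upsilon_{{\bf d}}}(p,q,1,t)\bigr) = \frac{(kt-1) + P_k^{\Upsilon_{{\bf d}}}(t)}{(P_k^{\Upsilon_{{\bf d}}}(t))^2}.$$
Likewise, clearing the common factor $(k-1)(1-x) = -(k-1)(x-1)$ from the formula of Theorem~\ref{thm:pqUris} rewrites it as $D_k^{\Upsilon_{{\bf d}}}(x,p,q,1,t) = \bigl(1 - \sum_{n \geq 1}(x-1)^{n-1}\frac{t^n}{[n]_{p,q}!}k(k-1)^{n-1}\bigr)^{-1}$; expanding $(x-1)^{n-1}$ in powers of $x$ and collecting the $x^0$, $x^1$, $x^2$ coefficients of $\sum_{n \geq 1}(x-1)^{n-1}\frac{t^n}{[n]_{p,q}!}k(k-1)^{n-1}$ produces $F_k^{\Upsilon_{{\bf d}}}(t) = \sum_{n \geq 1}(-1)^{n-1}\frac{t^n}{[n]_{p,q}!}k(k-1)^{n-1}$ (so that $1 - F_k^{\Upsilon_{{\bf d}}}(t) = P_k^{\Upsilon_{{\bf d}}}(t)$) together with $G_k^{\Upsilon_{{\bf d}}}(t)$ and $H_k^{\Upsilon_{{\bf d}}}(t)$ exactly as displayed in the statement.

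The geometric-series steps \eqref{11aeq:nlap}--\eqref{14eq:nlap} then apply unchanged: from $(F + xG + x^2H + O(x^3))^m|_x = mGF^{m-1}$ and the matching identity for the $x^2$-coefficient, summing over $m \geq 1$ yields
$$D_k^{\Upsilon_{{\bf d}}}(x,p,q,1,t)|_x = \frac{G_k^{\Upsilon_{{\bf d}}}(t)}{(P_k^{\Upsilon_{{\bf d}}}(t))^2}, \qquad D_k^{\Upsilon_{{\bf d}}}(x,p,q,1,t)|_{x^2} = \frac{H_k^{\Upsilon_{{\bf d}}}(t)P_k^{\Upsilon_{{\bf d}}}(t) + (G_k^{\Upsilon_{{\bf d}}}(t))^2}{(P_k^{\Upsilon_{{\bf d}}}(t))^3}.$$
Finally, $R_k^{\Upsilon_{{\bf d}}}(p,q,1,t) = [N_k^{\Upsilon_{{\bf d}}} - D_k^{\Upsilon_{{\bf d}}}]|_x$ and $S_k^{\Upsilon_{{\bf d}}}(p,q,1,t) = D_k^{\Upsilon_{{\bf d}}}|_{x^2} - R_k^{\Upsilon_{{\bf d}}}$, combined over the common denominators $(P_k^{\Upsilon_{{\bf d}}}(t))^2$ and $(P_k^{\Upsilon_{{\bf d}}}(t))^3$, reproduce the two claimed identities; this is the exact transcription of \eqref{13eq:nlap} and \eqref{15eq:nlap}.

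The only non-routine point is the preliminary renormalization in the second paragraph: unlike for $\Upsilon_{{\bf r}}, \Upsilon_{{\bf w}}, \Upsilon_{{\bf s}}$, the $\Upsilon_{{\bf d}}$ generating functions of Theorem~\ref{thm:pqUris} and Corollary~\ref{thm:0dris} are not presented in the $1/(1 - \text{series})$ form that the coefficient-extraction machinery expects, so one must first pull out the factors $k-1$ and $(k-1)(1-x)$ and recognize the residual coefficient $k(k-1)^{n-1}$. Once that is done, every remaining step is identical --- symbol for symbol, with $\rbinom{n+k-1}{n}$ replaced by $k(k-1)^{n-1}$ and $[k]_r$ replaced by $k$ --- to the ${\bf a} = {\bf r}$ derivation, so no new idea is required.
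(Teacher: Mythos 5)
Your proposal is correct and follows exactly the route the paper intends: the paper proves the ${\bf a}={\bf r}$ case in detail and then asserts that ``the exact same sequence of steps work in the other cases,'' which is precisely the substitution $\rbinom{n+k-1}{n}\mapsto k(k-1)^{n-1}$, $[k]_r\mapsto k$ that you carry out. Your explicit preliminary renormalization of $A_k^{\Upsilon_{{\bf d}}}(p,q,1,t)$ and $D_k^{\Upsilon_{{\bf d}}}(x,p,q,1,t)$ into the $1/(1-\text{series})$ form is a detail the paper leaves implicit, and you handle it correctly.
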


\section{Numbers involved; bijective questions} The generating functions from the
previous sections allows us to easily compute the initial sequences
of values for these generating functions using any computer algebra
system such as Mathematica or Maple. For example, let
\begin{equation}
A_k^{\Upsilon}(1,1,1,t) = \sum_{n\geq 0} A_{n,k}^{\Upsilon} \frac{t^n}{n!}
\end{equation}
so that $A_{n,k}^{\Upsilon}$ is equal to the number of $(\sg,\ep)
\in C_k \wr S_n$ such that $\Umch{(\sg,\ep)} = 0$.

\subsection{$\Upsilon_{{\bf r}} = \{(1~2,0~0),(1~2,0~1)\}$} For $\Upsilon_{{\bf r}} = \{(1~2,0~0),(1~2,0~1)\}$,
$A_{n,k}^{\Upsilon_{{\bf r}}}$ equals the number of $(\sg,\ep) \in
C_k \wr S_n$ such that $\ris{(\sg,\ep)} = 0$. Table \ref{table:Ankr}
gives initial values of $A_{n,k}^{\Upsilon_{{\bf r}}}$.

\begin{table}[h]
\caption{$A_{n,k}^{\Upsilon_{{\bf r}}}$ for $k,n\leq 5$.}
\[
\begin{array}{|c|c|c|c|c|c|c|}
 \hline       & n=0 & n=1 & n=2 & n=3 & n=4 & n=5\\
\hline k=2  & 1&2&5&16&65&326\\
\hline k=3 &  1&3&12&64&441&3771\\
\hline k=4 &  1&4&22&164&1589&19136\\
\hline k=5 &  1&5&35&335&4180&64876 \\ \hline
\end{array}
\]
\label{table:Ankr}
\end{table}

Several of these sequences appear in \cite{OEIS}.

In fact, we can easily calculate $A_{n,k}^{\Upsilon_{{\bf r}}}$ as a polynomial
in $k$. For example, we have
\begin{eqnarray*}
A_{0,k}^{\Upsilon_{{\bf r}}} &=& 1\\
A_{1,k}^{\Upsilon_{{\bf r}}} &=& k\\
A_{2,k}^{\Upsilon_{{\bf r}}} &=& \frac12 k(3k-1)\\
A_{3,k}^{\Upsilon_{{\bf r}}} &=& \frac16 k(19k^2-15k+2)\\
A_{4,k}^{\Upsilon_{{\bf r}}} &=& \frac{1}{24} k(211k^3 -270k^2+89k-6)\\
A_{5,k}^{\Upsilon_{{\bf r}}} &=& \frac{1}{120}k(3651k^4-6490k^3+3585k^2-650k+24)\\
\end{eqnarray*}
We point out that $A_{2,k}^{\Upsilon_{{\bf r}}}$ forms the familiar sequence of pentagonal numbers (A000326 in \cite{OEIS}). Other previously documented sequences appearing in Table \ref{table:Ankr} include the structured octagonal anti-prism numbers (A100184 in \cite{OEIS}) for $A_{3,k}^{\Upsilon_{{\bf r}}}$; as well as $A_{n,2}^{\Upsilon_{{\bf r}}}$ (A000522 in \cite{OEIS}), for which there are many known combinatorial interpretations, including the total number of arrangements of all subsets of $[n]$.

We conjecture that for $n \geq 1$ and $k \geq 2$, $A_{n,k}^{\Upsilon_{{\bf r}}}$ is always of the form
$\frac{1}{n!}kP_n(k)$ where $P_n(k)$ is a polynomial of degree $n-1$ whose leading coefficient is positive and such that signs of the remaining coefficients alternate. Now we can prove that $A_{n,k}^{\Upsilon_{{\bf r}}}$ is always of the form
$\frac{1}{n!}kP_n(k)$ where $P_n(k)$ is a polynomial of degree $n-1$ and
the term of degree 1 in $k$ is $(-1)^{n-1}(n-1)!$.
That is, for any $k \geq 2$,
if we set $p =q =r =1$ and $x =0$ in (\ref{ris1}), we see that
\begin{equation}\label{rpoly1}
n! \bar{\Gamma}(h_n) = A_{n,k}^{\Upsilon_{{\bf r}}}
\end{equation}
where
\begin{equation}\label{rpoly2}
\bar{\Gamma}(e_n) = \frac{\binom{n+k-1}{n}}{n!} = \frac{(k)\uparrow_n}{(n!)^2}.
\end{equation}
Here we let $(q)\uparrow_0 =1$ and $(q)\uparrow_n =q(q+1) \ldots (q+n-1)$
for $n \geq 1$. But then
\begin{eqnarray}\label{rpoly3}
n!\bar{\Gamma}(h_n) &=& n!\sum_{\mu \vdash n} (-1)^{n-\ell(\mu)}
\bar{\Gamma}(e_{\mu})  \nonumber \\
&=& \frac{1}{n!} \sum_{\mu \vdash n} (-1)^{n-\ell(\mu)}
\binom{n}{\mu_1, \ldots, \mu_{\ell(\mu)}}^2 \prod_{i=1}^{\ell(\mu)}
(k)\uparrow_{\mu_i}.
\end{eqnarray}
It is easy to see that the right hand side of (\ref{rpoly3}) is a
polynomial of degree $n$ and the lowest degree term comes from the
term $(-1)^{n-1} k(k+1)\ldots (k+n-1)$ corresponding to $\mu = (n)$
which is of the form $(-1)^{n-1} (n-1)!k +O(k^2)$.

\subsection{$\Upsilon_{{\bf s}} = \{(1~2,0~1)\}$} For $\Upsilon_{{\bf s}} = \{(1~2,0~1)\}$,
$A_{n,k}^{\Upsilon_{{\bf s}}}$ equals the number of $(\sg,\ep) \in
C_k \wr S_n$ such that $\sris{(\sg,\ep)} = 0$. Table
\ref{table:Anks} gives initial values of $A_{n,k}^{\Upsilon_{{\bf
s}}}$.

\begin{table}[h]
\caption{$A_{n,k}^{\Upsilon_{{\bf s}}}$ for $k,n\leq 5$.}
\[
\begin{array}{|c|c|c|c|c|c|c|}
 \hline       & n=0 & n=1 & n=2 & n=3 & n=4 & n=5\\
\hline k=2  & 1&2&7&36&246&2100\\
\hline k=3 &  1&3&15&109&1050&12630\\
\hline k=4 &  1&4&26&244&3031&47000\\
\hline k=5 &  1&5&40&460&6995&132751\\
\hline
\end{array}
\]
\label{table:Anks}
\end{table}

In fact, we can easily calculate $A_{n,k}^{\Upsilon_{{\bf s}}}$ as a polynomial in $k$. For example, we have
\begin{eqnarray*}
A_{0,k}^{\Upsilon_{{\bf s}}} &=& 1\\
A_{1,k}^{\Upsilon_{{\bf s}}} &=& k\\
A_{2,k}^{\Upsilon_{{\bf s}}} &=& \frac12 k(3k+1)\\
A_{3,k}^{\Upsilon_{{\bf s}}} &=& \frac16 k(19k^2+15k+2)\\
A_{4,k}^{\Upsilon_{{\bf s}}} &=& \frac{1}{24} k(211k^3+270k^2+89k+6)\\
A_{5,k}^{\Upsilon_{{\bf s}}} &=& \frac{1}{120}k(3651k^4+6490k^3+3585k^2+650k+24)\\
\end{eqnarray*}
We point out that $A_{2,k}^{\Upsilon_{{\bf s}}}$ forms the familiar sequence of the second pentagonal numbers (A005449 in \cite{OEIS}). None of the other rows or columns in Table \ref{table:Anks} matched any previously known sequences in \cite{OEIS}.

We conjecture that for $n \geq 1$ and $k \geq 2$, $A_{n,k}^{\Upsilon_{{\bf s}}}$ is always of the form
$\frac{1}{n!}kR_n(k)$ where $R_n(k)$ is a polynomial of degree $n-1$ with positive coefficients. In fact, we see that the coefficients of $P_{n,k}$ and $R_{n,k}$ are the same up to a sign for all $n$. This we can prove.
That is, for any $k \geq 2$,
if we set $p =q =r =1$ and $x =0$ in (\ref{sris1}), we see that
\begin{equation}\label{spoly1}
n! \bar{\Gamma}_s(h_n) = A_{n,k}^{\Upsilon_{{\bf s}}}
\end{equation}
where
\begin{equation}\label{spoly2}
\bar{\Gamma}_s(e_n) = \frac{\binom{k}{n}}{n!} = \frac{(k)\downarrow_n}{(n!)^2}.
\end{equation}
Here we let $(q)\downarrow_0 =1$ and $(q)\downarrow_n =q(q-1) \ldots (q-n+1)$
for $n \geq 1$. But then
\begin{eqnarray}\label{spoly3}
n!\bar{\Gamma}_s(h_n) &=& n!\sum_{\mu \vdash n} (-1)^{n-\ell(\mu)}
\bar{\Gamma}_s(e_{\mu})  \nonumber \\
&=& \frac{1}{n!} \sum_{\mu \vdash n} (-1)^{n-\ell(\mu)}
\binom{n}{\mu_1, \ldots \mu_{\ell(\mu)}}^2 \prod_{i=1}^{\ell(\mu)} (k)\downarrow_{\mu_i}.
\end{eqnarray}
Since for any $n \geq 1$, $(k)\downarrow_n = (-1)^n(k)\uparrow_n$,
it is easy to see that the right hand side of (\ref{spoly3}) is
obtained from the right hand side of (\ref{rpoly3}) by replacing $k$
by $-k$ and multiplying by $(-1)^n$. Thus the conjecture that
$R_n(k)$ has positive coefficients is equivalent to our conjecture
that the signs of the coefficients of $P_n(k)$ alternate.

\section{$\Upsilon_{{\bf w}} = \{(1~2,0~0)\}$} For $\Upsilon_{{\bf w}} = \{(1~2,0~0)\}$,
$A_{n,k}^{\Upsilon_{{\bf w}}}$ equals the number of $(\sg,\ep) \in
C_k \wr S_n$ such that $\wris{(\sg,\ep)} = 0$. Table
\ref{table:Ankw} gives initial values of $A_{n,k}^{\Upsilon_{{\bf
w}}}$.

\begin{table}[h]
\caption{$A_{n,k}^{\Upsilon_{{\bf w}}}$ for $k,n\leq 5$.}
\[
\begin{array}{|c|c|c|c|c|c|c|}
 \hline       & n=0 & n=1 & n=2 & n=3 & n=4 & n=5\\
\hline k=2  & 1&2&6&26&150&1082\\
\hline k=3 &  1&3&15&111&1095&13503\\
\hline k=4 &  1&4&28&292&4060&70564\\
\hline k=5 &  1&5&45&605&10845&243005\\
\hline
\end{array}
\]
\label{table:Ankw}
\end{table}

In fact, we can easily calculate $A_{n,k}^{\Upsilon_{{\bf w}}}$ as a polynomial
in $k$. For example, we have
\begin{eqnarray*}
A_{0,k}^{\Upsilon_{{\bf w}}} &=& 1\\
A_{1,k}^{\Upsilon_{{\bf w}}} &=& k\\
A_{2,k}^{\Upsilon_{{\bf w}}} &=& k(2k-1)\\
A_{3,k}^{\Upsilon_{{\bf w}}} &=& k(6k^2-6k+1)\\
A_{4,k}^{\Upsilon_{{\bf w}}} &=& k(24k^3 -36k^2+14k-1)\\
A_{5,k}^{\Upsilon_{{\bf w}}} &=& k(120k^4-240k^3+150k^2-30k+1)\\
\end{eqnarray*}
We point out that $A_{2,k}^{\Upsilon_{{\bf w}}}$ forms the familiar sequence of hexagonal numbers (A000384 in \cite{OEIS}). Additionally, $A_{n,2}^{\Upsilon_{{\bf w}}}$ matches the sequence counting the number of necklaces on set of labeled beads (A000629 in \cite{OEIS}). In fact, in this case we can
give a completely combinatorial interpretation of
$A_{n,k}^{\Upsilon_{{\bf w}}}$.  Let $OSetpn(n)$ denote
the set of ordered set partitions of $\{1, \ldots, n\}$. For
any set partition $\pi \in OSetpn(n)$, let $\ell(\pi)$ denote
the number of parts of $\pi$. Then we claim that
\begin{equation}\label{wpoly0}
A_{n,k}^{\Upsilon_{{\bf w}}} = \sum_{\pi \in OSetpn(n)} (-1)^{n -
\ell(\pi)} k^{\ell(\pi)}
\end{equation}
so that the coefficient of $k^j$ in $A_{n,k}^{\Upsilon_{{\bf w}}}$
is equal to $(-1)^{n-j}j!S_{n,j}$ where $S_{n,j}$ is the Stirling
number of the second kind which is the number of set partitions of
$\{1, \ldots, n\}$ into $j$ parts. That is, for any $k \geq 2$, if
we set $p =q =r =1$ and $x =0$ in (\ref{wris1}), we see that
\begin{equation}\label{wpoly1}
n! \bar{\Gamma}_w(h_n) = A_{n,k}^{\Upsilon_{{\bf w}}}
\end{equation}
where
\begin{equation}\label{wpoly2}
\bar{\Gamma}_w(e_n) = \frac{k}{n!}.
\end{equation}
But then
\begin{eqnarray}\label{wpoly3}
n!\bar{\Gamma}_s(h_n) &=& n!\sum_{\mu \vdash n} (-1)^{n-\ell(\mu)}
\bar{\Gamma}_s(e_{\mu})  \nonumber \\
&=& n!\sum_{\mu \vdash n} (-1)^{n-\ell(\mu)}
\sum_{(b_1, \ldots, b_{\ell(\mu)}) \in \mathcal{B}_{\mu,n}}
\prod_{i=1}^{\ell(\mu)} \frac{k}{b_i!} \nonumber \\
&=& \sum_{\mu \vdash n} (-1)^{n-\ell(\mu)}
\sum_{(b_1, \ldots, b_{\ell(\mu)}) \in \mathcal{B}_{\mu,n}} \binom{n}{b_1, \ldots, b_{\ell(\mu)}} k^{\ell(\mu)}.
\end{eqnarray}
Since $\binom{n}{b_1, \ldots, b_{\ell(\mu)}}$ counts the number of
ordered set partitions $\pi = (\pi_1, \ldots, \pi_{\ell(\mu)})$ such
that $|\pi_j| = b_j$, it is easy to see that the right hand side of
(\ref{wpoly3}) equals the right hand side of (\ref{wpoly0}).

\subsection{$\Upsilon_{{\bf d}} = \{(1~2,0~1),(1~2,1~0)\}$} Table
\ref{table:Ankd} gives initial values of $A_{n,k}^{\Upsilon_{{\bf
d}}}$.

\begin{table}[h]
\caption{$A_{n,k}^{\Upsilon_{{\bf d}}}$ for $k,n\leq 5$.}
\[
\begin{array}{|c|c|c|c|c|c|c|}
 \hline       & n=0 & n=1 & n=2 & n=3 & n=4 & n=5\\
\hline $k=2$  & 1&2&6&26&150&1082\\
\hline $k=3$ &  1&3&12&66&480&4368\\
\hline $k=4$ &  1&4&20&132&1140&12324\\
\hline $k=5$ &  1&5&30&230&2280&28280\\
\hline
\end{array}
\]
\label{table:Ankd}
\end{table}

In fact, we can easily calculate $A_{n,k}^{\Upsilon_{{\bf d}}}$ as a polynomial in $k$. For example, we have
\begin{eqnarray*}
A_{0,k}^{\Upsilon_{{\bf d}}} &=& 1\\
A_{1,k}^{\Upsilon_{{\bf d}}} &=& k\\
A_{2,k}^{\Upsilon_{{\bf d}}} &=& k^2+k\\
A_{3,k}^{\Upsilon_{{\bf d}}} &=& k^3+4k^2+k\\
A_{4,k}^{\Upsilon_{{\bf d}}} &=& k^4+11k^3+11k^2+k\\
A_{5,k}^{\Upsilon_{{\bf d}}} &=& k^5+26k^4+66k^3+26k^2+k\\
\end{eqnarray*}
In this case, we shall show that $A_{n,k}^{\Upsilon_{{\bf d}}}$
is just the Eulerian polynomial
\begin{equation}\label{dpoly0}
A_{n,k}^{\Upsilon_{{\bf d}}} = \sum_{\sg \in S_n} x^{\des{\sg}+1}.
\end{equation}
That is,
for any $k \geq 2$,
if we set $p =q =r =1$ and $x =0$ in (\ref{Uris1}), we see that
\begin{equation}\label{dpoly1}
n! \bar{\Gamma}_U(h_n) = A_{n,k}^{\Upsilon_{{\bf d}}}
\end{equation}
where
\begin{equation}\label{dpoly2}
\bar{\Gamma}_U(e_n) = \frac{k(k-1)^{n-1}}{n!}.
\end{equation}
But then
\begin{eqnarray}\label{dpoly3}
n!\bar{\Gamma}_U(h_n) &=& n!\sum_{\mu \vdash n} (-1)^{n-\ell(\mu)}
\bar{\Gamma}_U(e_{\mu})  \nonumber \\
&=& n!\sum_{\mu \vdash n} (-1)^{n-\ell(\mu)}
\sum_{(b_1, \ldots, b_{\ell(\mu)}) \in \mathcal{B}_{\mu,n}}
\prod_{i=1}^{\ell(\mu)} \frac{k(k-1)^{b_i-1}}{b_i!} \nonumber \\
&=& \sum_{\mu \vdash n} \sum_{(b_1, \ldots, b_{\ell(\mu)}) \in
\mathcal{B}_{\mu,n}} \binom{n}{b_1, \ldots, b_{\ell(\mu)}}
\prod_{i=1}^{\ell(\mu)} k(1-k)^{b_i-1}.
\end{eqnarray}
Next we want to give a combinatorial interpretation to
(\ref{dpoly3}). For any brick tabloid $T= (b_1, \ldots,
b_{\ell(\mu)}) \in \mathcal{B}_{\mu,n}$, we can interpret
$\binom{n}{b_1, \ldots, b_{\ell(\mu)}}$ as the set of all fillings
of $T$ with a permutation $\sg \in S_{n}$ such that $\sg$ is
increasing in each brick.  We then interpret
$\prod_{j=1}^{\ell(\mu)} k(1-k)^{b_j-1}$ as all ways of picking a
label of the cells of each brick except the final cell with either
an $1$ or a $-k$ and letting the label of the last cell of each
brick be $k$. We let $\mathcal{D}_{n}$ denote the set of all filled
labelled brick tabloids that arise in this way.  Thus a $C \in
\mathcal{D}_{n}$ consists of a brick tabloid $T$, a permutation $\sg
\in S_{n}$ and a labelling $L$ of the cells of $T$ with elements
from $\{k,-k,1\}$ such that
\begin{enumerate}
\item $\sg$ is strictly increasing in each brick,
 \item the final cell of each
brick is labelled with k, and
\item each cell which is not a final
cell of a brick is labelled with $1$  or $-k$.
\end{enumerate}
We then define the weight $w(C)$ of $C$ to be the product of all
the $k$ labels in $L$ and the sign $sgn(C)$ of $C$ to be
the product of all the $-1$ labels in $L$. For example,
if $n =12$, $k=4$, and $T =(4,3,3,2)$, then Figure \ref{figure:ddes1}
pictures such a composite object $C \in \mathcal{D}_{12}$ where
$w(C) = k^7$ and $sgn(C) =-1$.

Thus
\begin{equation}\label{dpoly4}
n!\bar{\Gamma}_U(h_{n}) = \sum_{C \in \mathcal{D}_{n}}
sgn(C) w(C).
\end{equation}

\fig{ddes1}{A composite object $C \in \mathcal{D}_{12}$.}

Next we define a weight-preserving sign-reversing involution
$J:\mathcal{D}_{n} \rightarrow \mathcal{D}_{n}$.  To define
$J(C)$, we scan the cells of $C =(T,\sg, L)$ from left  to right
looking for the leftmost cell $t$ such that either (i) $t$ is
labelled with $-k$ or (ii) $t$ is at the end of a brick $b_j$ and
the brick $b_{j+1}$ immediately following $b_j$ has the property
that $\sg$ is strictly increasing in all the cells corresponding
to $b_j$ and $b_{j+1}$.   In case (i), $J(C)
=(T',\sg', L')$ where $T'$ is the result of  replacing the brick
$b$ in $T$ containing $t$ by two bricks $b^*$ and $b^{**}$ where
$b^*$ contains the cell $t$ plus all the cells in $b$ to the left
of $t$ and $b^{**}$ contains all the cells of $b$ to the right of
$t$, $\sg =\sg'$, and $L'$ is the labelling that results
from $L$ by changing the label of cell $t$ from $-k$ to $k$. In
case (ii), $J(C) =(T',\sg', L')$ where $T'$ is the result of
replacing the bricks $b_j$ and $b_{j+1}$ in $T$ by a single brick
$b$, $\sg =\sg'$, and $L'$ is the labelling that results
from $L$ by changing the label of cell $t$ from $k$ to $-k$. If
neither case (i) or case (ii) applies, then we let $J(C) =C$. For
example, if $C$ is the element of $\mathcal{D}_{12}$ pictured in
Figure \ref{figure:ddes1}, then $J(C)$ is pictured in Figure
\ref{figure:ddes2}.

\fig{ddes2}{$J(C)$ for $C$ in Figure \ref{figure:ddes1}.}

It is easy to see that $J$ is a weight-preserving sign-reversing
involution and hence $J$ shows that
\begin{equation}\label{dpoly5}
n!\bar{\Gamma}_U(h_n) = \sum_{C \in \mathcal{D}_{n},J(C) = C}
sgn(C) w(C).
\end{equation}

Thus we must examine the fixed points $C = (T,\sg,L)$ of $J$.
First there can be no $-k$ labels in $L$ so that $sgn(C) =1$.
Moreover,  if $b_j$ and $b_{j+1}$ are two consecutive bricks in $T$
and $t$ is the last cell of $b_j$, then it can not be the case that
$\sg_{t} < \sg_{t+1}$ since otherwise we
could combine $b_j$ and $b_{j+1}$. For any such fixed point, we
associate an element $(\sg,w) \in C_k \wr S_{n}$. For example, a
fixed point of $I$ is pictured in Figure \ref{figure:ddes3} where
\begin{equation*}
\sg = 2~3~4~11~6~9~10~1~8~12~5~7.
\end{equation*}
It follows that if cell $t$ is at the end of a brick which is not
the last brick, then $\sg_t > \sg_{t+1}$. However if $v$ is a cell
which is not at the end of a brick, then our definitions force
$\sg_{v} < \sg_{v+1}$.  Since each such cell $v$ must be labelled
with an $1$, it follows that $sgn(C)w(C) = k^{des(\sg)+1}$ where the
$+1$ comes from the fact that the last cell of the last brick is
also labeled with $k$.
  Vice versa, if
$\sg \in S_{n}$, then we can create a fixed point $C
=(T,\sg,L)$ by having the bricks in $T$ end at cells of the form
$t$ where $\sg_t > \sg_{t+1}$ and labeling each such cell with $k$,
labeling the last cell with $k$,  and labelling the remaining cells
with $1$. Thus we have shown that
\begin{equation*}
n!\bar{\Gamma}_U(h_{n}) = \sum_{\sg \in S_n} k^{\des{\sg}+1}
\end{equation*}
as desired.

\fig{ddes3}{A fixed point of $J$.}

\subsection{$U_{n,k,{\bf a}}=
|\mathcal{U}^{\Upsilon_{{\bf a}}}_{n,k}|$ and $V_{n,k,{\bf a}}
=|\mathcal{V}^{\Upsilon_{{\bf a}}}_{n,k}|$} We have computed similar
tables for $U_{n,k,{\bf a}}= |\mathcal{U}^{\Upsilon_{{\bf
a}}}_{n,k}|$ and $V_{n,k,{\bf a}} =|\mathcal{V}^{\Upsilon_{{\bf
a}}}_{n,k}|$ using our formulas for the generating functions
$R_k^{\Upsilon_{{\bf a}}}(p,q,r,t)$ and $S_k^{\Upsilon_{{\bf
a}}}(p,q,r,t)$. Table \ref{table:Unkr} gives initial values of
$U_{n,k,{\bf r}}$, which counts the number of $(\sg,\ep) \in C_k \wr
S_n$ such that $\Ris((\sg,\ep)) = \{s,s+1\}$ for some $1 \leq s \leq
n-2$.

\begin{table}[h]
\caption{$U_{n,k,{\bf r}}$ for $n\leq 7$, $k\leq 5$.}
\[
\begin{array}{|c|c|c|c|c|c|}
\hline       & n=3 & n=4 & n=5 & n=6 & n=7 \\
\hline k=2  & 4&54&538&5064&48900\\
\hline k=3 &  10&210&3363&52056&838542\\
\hline k=4 &  20&570&12568&270328&6083712\\
\hline k=5 &  35&1260&35328&973840&28127160\\
\hline
\end{array}
\]
\label{table:Unkr}
\end{table}

In fact, we can easily calculate $U_{n,k,{\bf r}}$ as a polynomial in $k$. For example, we have
\begin{eqnarray*}
U_{3,k,{\bf r}} &=& \frac{1}{6}k(k+1)(k+2)\\
U_{4,k,{\bf r}} &=& \frac{1}{4}k(k+1)(k+2)(5k-1)\\
U_{5,k,{\bf r}} &=& \frac{1}{120}k(k+1)(k+2)(903k^2-479k+36)\\
U_{6,k,{\bf r}} &=& \frac{1}{45}k(k+1)(k+2)(2032k^3-1896k^2+419k-15)\\
U_{7,k,{\bf r}} &=& \frac{1}{1680}k(k+1)(k+2)(482031k^4-662450k^3 +268653k^2-32554k+600)
\end{eqnarray*}
Thus we conjecture that for $n \geq 3$ and $k \geq 2$, $U_{n,k,{\bf r}}$ is always of the form
$\frac{1}{n!}k(k+1)(k+2)U_{n,1}(k)$ where $U_{n,{\bf r}}(k)$ is a polynomial of degree $n-3$ whose leading coefficient is positive and such that signs of the remaining coefficients alternate.

Furthermore, we point out that $U_{3,k}^{\Upsilon_{{\bf r}}}$ forms the familiar sequence of tetrahedral numbers (A000292 in \cite{OEIS}). None of the other rows or columns in Table \ref{table:Unkr} matched any previously known sequence in \cite{OEIS}.

Table \ref{table:Unks} gives initial values of $U_{n,k,{\bf s}}$, which counts the number of $(\sg,\ep) \in C_k \wr S_n$ such that $\SRis((\sg,\ep)) = \{s,s+1\}$ for some $1 \leq s \leq n-2$.

\begin{table}[h]
\caption{$U_{n,k,{\bf s}}$ for $n\leq 7$, $k\leq 5$.}
\[
\begin{array}{|c|c|c|c|c|c|}
\hline       & n=3 & n=4 & n=5 & n=6 & n=7 \\
\hline k=2  & 0&0&0&0&0\\
\hline k=3 &  1&24&480&9760&212310\\
\hline k=4 &  4&126&3280&86440&2431800\\
\hline k=5 &  10&390&12503&404688&13962690\\
\hline
\end{array}
\]
\label{table:Unks}
\end{table}

In fact, we can easily calculate $U_{n,k,{\bf s}}$ as a polynomial in $k$. For example, we have
\begin{eqnarray*}
U_{3,k,{\bf s}} &=& \frac{1}{6}k(k-1)(k-2)\\
U_{4,k,{\bf s}} &=& \frac{1}{4}k(k-1)(k-2)(5k+1)\\
U_{5,k,{\bf s}} &=& \frac{1}{120}k(k-1)(k-2)(903k^2+479k+36)\\
U_{6,k,{\bf s}} &=& \frac{1}{45}k(k-1)(k-2)(2032k^3+1896k^2+419k+15)\\
U_{7,k,{\bf s}} &=& \frac{1}{1680}k(k-1)(k-2)(482031k^4+662450k^3 +268653k^2+32554k+600)
\end{eqnarray*}
Thus we conjecture that for $n \geq 3$ and $k \geq 2$, $U_{n,k,{\bf s}}$ is always of the form $\frac{1}{n!}k(k+1)(k+2)U_{n,3}(k)$ where $U_{n,{\bf s}}(k)$ is a polynomial of degree $n-3$ with positive coefficients. Moreover, we conjecture that the coefficients of $U_{n,{\bf s}}(k)$ and $U_{n,{\bf r}}(k)$ are the same up to a sign for $n \geq 3$. None of the rows or columns in Table \ref{table:Unks} matched any previously known non-trivial sequence in \cite{OEIS}.

Table \ref{table:Unkw} gives initial values of $U_{n,k, {\bf w}}$, which counts the number of $(\sg,\ep) \in C_k \wr S_n$ such that $\WRis((\sg,\ep)) = \{s,s+1\}$ for some $1 \leq s \leq n-2$.

\begin{table}[h]
\caption{$U_{n,k,{\bf w}}$ for $n\leq 7$, $k\leq 5$.}
\[
\begin{array}{|c|c|c|c|c|c|}
\hline       & n=3 & n=4 & n=5 & n=6 & n=7 \\
\hline k=2  & 2&28&326&3896&50186\\
\hline k=3 &  3&66&1269&25512&556683\\
\hline k=4 &  4&120&3212&90480&2773140\\
\hline k=5 &  5&190&1303&235880&9303725\\
\hline
\end{array}
\]
\label{table:Unkw}
\end{table}

In fact, we can easily calculate $U_{n,k,{\bf w}}$ as a polynomial in $k$. For example, we have
\begin{eqnarray*}
U_{3,k,{\bf w}} &=& k\\
U_{4,k,{\bf w}} &=& k(8k-2)\\
U_{5,k,{\bf w}} &=& k(60k^2-40k+3)\\
U_{6,k,{\bf w}} &=& 4k(120k^3-135k^2+34k-1)\\
U_{7,k,{\bf w}} &=& k(4200k^4-6720k^3+3108k^2-392k+5)
\end{eqnarray*}
Thus we conjecture that for $n \geq 3$ and $k \geq 2$, $U_{n,k,{\bf w}}$ is always of the form $kU_{n,{\bf w}}(k)$ where $U_{n,2}(k)$ is a polynomial of degree $n-3$ whose leading coefficient is positive and is such that remaining coefficients alternate in sign.

Furthermore, we point out that $U_{4,k}^{\Upsilon_{{\bf w}}}$ forms the sequence of alternating hexagonal numbers (A014635 in \cite{OEIS}). None of the other rows or columns in Table \ref{table:Unkw} matched any previously known sequence in \cite{OEIS}.

Table \ref{table:Unkd} gives initial values of $U_{n,k, {\bf d}}$, which counts the number of $(\sg,\ep) \in C_k \wr S_n$ such that for some $1 \leq s \leq n-2$, $i$ is a start of $\Upsilon_{{\bf d}}$-match if and only if $i \in \{s,s+1\}$.

\begin{table}[h]
\caption{$U_{n,k,{\bf d}}$ for $n\leq 7$, $k\leq 5$.}
\[
\begin{array}{|c|c|c|c|c|c|}
\hline       & n=3 & n=4 & n=5 & n=6 & n=7 \\
\hline k=2  & 2&28&326&3896&50186\\
\hline k=3 &  12&240&3744&58080&958560\\
\hline k=4 &  36&936&18252&345168&6860916\\
\hline k=5 &  80&2560&58840&1329920&30723200\\
\hline
\end{array}
\]
\label{table:Unkd}
\end{table}

In fact, we can easily calculate $U_{n,k,{\bf d}}$ as a polynomial in $k$. For example, we have
\begin{eqnarray*}
U_{3,k,{\bf d}} &=& k(k-1)^2\\
U_{4,k,{\bf d}} &=& 2k(k-1)^2(3k+1)\\
U_{5,k,{\bf d}} &=& k(k-1)^2(23k^2+34k+3)\\
U_{6,k,{\bf d}} &=& 6k(k-1)^2(18k^3+70k^2+31k+1)\\
U_{7,k,{\bf d}} &=& k(k-1)^2(201k^4+1660k^3+1962k^2+372k+5)
\end{eqnarray*}
Thus we conjecture that for $n \geq 3$ and $k \geq 2$, $U_{n,k,{\bf d}}$ is always of the form $k(k-1)^2U_{n,{\bf d}}(k)$ where $U_{n,{\bf d}}(k)$ is a polynomial of degree $n-3$ with positive coefficients. None of the rows or columns in Table \ref{table:Unkd} matched any non-trivial sequence in \cite{OEIS}.

We shall only give polynomial expressions for $V_{k,n,{\bf a}} =|\mathcal{V}_{n,k}^{\Upsilon_{{\bf a}}}|$ for ${\bf a} \in \{{\bf r},{\bf w},{\bf s},{\bf d}\}$ and $n =4,5,6,7$. Note that by definition $V_{n,k,{\bf a}} =0$ for $n =1,2,3$.

For $V_{k,n,{\bf r}}$,  we have the following initial polynomials.

\begin{eqnarray*}
V_{4,k,{\bf r}} &=& \frac{1}{24}k(k+1)(35k^2+31k-6)\\
V_{5,k,{\bf r}} &=& \frac{1}{120}k(k+1)(2253k^3+1277k^2-1022k+72)\\
V_{6,k,{\bf r}} &=& \frac{1}{72}k(k+1)(12781k^4+2336k^3-8911k^2+2146k-72)\\
V_{7,k,{\bf r}} &=& \frac{1}{2520}k(k+1)(3828237k^5-943444k^4-3213331k^3 \\
&& \hspace{45pt} +1679386k^2-207048k+3600)
\end{eqnarray*}
Thus we conjecture that for $n \geq 4$ and $k \geq 2$, $V_{n,k,{\bf r}}$ is always of the form $k(k+1)V_{n,{\bf r}}(k)$ where $V_{n,{\bf r}}(k)$ is a polynomial of degree $n-2$. Note that this is first example where we did not obtain polynomials whose coefficients are either positive or whose coefficients alternate in sign.

However, we still seem to have a type of reciprocity between $V_{n,k,{\bf r}}$ and $V_{n,k,{\bf s}}$. That is, we have the following initial polynomials.
\begin{eqnarray*}
V_{4,k,{\bf s}} &=& \frac{1}{24}k(k-1)(35k^2-31k-6)\\
V_{5,k,{\bf s}} &=& \frac{1}{120}k(k-1)(2253k^3-1277k^2-1022k-72)\\
V_{6,k,{\bf s}} &=& \frac{1}{72}k(k-1)(12781k^4-2336k^3-8911k^2-2146k-72)\\
V_{7,k,{\bf s}} &=& \frac{1}{2520}k(k-1)(3828237k^5+943444k^4-3213331k^3 \\
&& \hspace{45pt} -1679386k^2-207048k-3600)
\end{eqnarray*}
Thus we conjecture that for $n \geq 4$ and $k \geq 2$, $V_{n,k,{\bf s}}$ is always of the form $k(k-1)V_{n,{\bf s}}(k)$ where $V_{n,{\bf s}}(k)$ is a polynomial of degree $n-2$. Moreover we conjecture that the the absolute value of the coefficients in $V_{n,{\bf s}}(k)$ and $V_{n,{\bf r}}(k)$ are the same.

For $V_{k,n,{\bf w}}$,  we have the following initial polynomials.
\begin{eqnarray*}
V_{4,k,{\bf w}} &=& k(6k-1)\\
V_{5,k,{\bf w}} &=& k(90k^2-50k+3)\\
V_{6,k,{\bf w}} &=& 2k(5050k^3-5040k^2+118k-3)\\
V_{7,k,{\bf w}} &=& 2k(6300k^4-9240k^3+3864k^2-434k+5)
\end{eqnarray*}
Thus we conjecture that for $n \geq 4$ and $k \geq 2$, $V_{n,k,{\bf w}}$ is always of the form $kV_{n,{\bf w}}(k)$ where $V_{n,{\bf w}}(k)$ is a polynomial of degree $n-3$ whose leading coefficients is positive and where the signs of the remaining coefficients alternate.

For $V_{k,n,{\bf d}}$,  we have the following initial polynomials.
\begin{eqnarray*}
V_{4,k,{\bf d}} &=& k(k-1)^2(5k+1)\\
V_{5,k,{\bf d}} &=& k(k-1)^2(43k^2+44k+3)\\
V_{6,k,{\bf d}} &=& k(k-1)(230k^3+626k^2+218k+6)\\
V_{7,k,{\bf d}} &=& k(k-1)^2(990k^4+5588k^3+5184k^2+838k+10)
\end{eqnarray*}
Thus we conjecture that for $n \geq 4$ and $k \geq 2$, $V_{n,k,{\bf d}}$ is always of the form $k(k-1)^2V_{n,{\bf d}}(k)$ where $V_{n,{\bf d}}(k)$ is a polynomial of degree $n-3$ whose coefficients are positive.

Note that $U_{n,k,{\bf w}}$ and $V_{n,k,{\bf w}}$ both make sense even in the case where $k=1$. That is, $U_{n,1,{\bf w}}$ equals the number of $\sg \in S_n$ such that $\Ris(\sg) = \{s,s+1\}$ for some $1 \leq s \leq n-2$ and $V_{n,1,{\bf w}}$ equals the number of $\sg \in S_n$ such that $\Ris(\sg) = \{i,j\}$ where $i +2 \leq j$.  Table \ref{table:UVforw} gives these values for small $n$.

\begin{table}[h]
\caption{$U_{n,1,{\bf w}}$ and $V_{n,1,{\bf w}}$ for $n\leq 7$.}
\[
\begin{array}{|c|c|c|c|c|c|}
\hline       & n=3 & n=4 & n=5 & n=6 & n=7 \\
\hline U_{n,1,{\bf w}} & 1&6&23&72&201\\
\hline V_{n,1,{\bf w}} & 0&5&43&230&990\\
\hline
\end{array}
\]
\label{table:UVforw}
\end{table}

One would have expected that generating functions for $U_{n,1,{\bf
w}}$ and $V_{n,1,{\bf w}}$ would have appeared before, but the
sequence for $U_{n,1,{\bf w}}$ appears in OEIS~\cite{OEIS} but not
with our interpretation and the sequence for $V_{n,k,{\bf w}}$ does
not even appear in OEIS~\cite{OEIS} before our work.

\section{Further research}

An obvious direction of research is considering matching conditions
on $C_k \wr S_n$ of length 3 or more and deriving
avoidance/distribution formulas similar to those derived in this
paper.  Another obvious direction of research is to look at distributions
of bi-occurrences of patterns in $C_k \wr S_n$.
One can also consider $k$-tuples of words from a fixed
finite alphabet with the obvious
extension of our matching and occurrence conditions. All of these
topics will be studied in subsequent papers.

\end{document}